\renewcommand*{\backref}[1]{\ifx#1\relax \else Page #1 \fi}
\renewcommand*{\backrefalt}[4]{%
  \ifcase #1 \footnotesize{(Not cited.)}%
  \or        \footnotesize{(Cited on page~#2.)}%
  \else      \footnotesize{(Cited on pages~#2.)}%
  \fi
}
\tikzset{>=latex}
\theoremstyle{plain}
\newtheorem{thm}{Theorem}[section]
\newtheorem{cor}[thm]{Corollary}
\newtheorem{prop}[thm]{Proposition}
\newtheorem{lemma}[thm]{Lemma}
\newtheorem{assumption}{Assumption}[section]
\theoremstyle{definition}
\newtheorem{defn}[thm]{Definition}
\newtheorem{rem}[thm]{Remark}
\newtheorem{eg}[thm]{Example}
\newtheorem*{sinusoid}{Sinusoidal Covariance}
\newtheorem{fact}[thm]{Fact}
\newtheorem{observe}[thm]{Observation}
\newcommand{\rpm}{\sbox0{$1$}\sbox2{$\scriptstyle\pm$}
  \raise\dimexpr(\ht0-\ht2)/2\relax\box2 }
\tikzstyle{nd} = [anchor=base, inner sep=0pt]
\tikzstyle{ndpic} = [remember picture, baseline, every node/.style={nd}]
\newcommand{\E}{\mathbb E}
\newcommand{\purple}[1]{\textcolor{black}{#1}}
\newcommand{\mb}{\mathbb}
\newcommand{\mc}{\mathcal}
\newcommand{\lv}{\left\lVert}
\newcommand{\rv}{\right\rVert}
\newcommand{\dtheta}{\Delta\theta}
\newcommand\topb{\mathrel{\stackrel{\makebox[0pt]{\mbox{\tiny p}}}{\to}}}
\newcommand\todbt{\mathrel{\stackrel{\makebox[0pt]{\mbox{\tiny d}}}{\to}}}
\newcommand{\MSE}{\mathsf{MSE}}
\newcommand{\PrE}{\mathsf{PE}}
\def\ba{\begin{enumerate}[(a)]}
\def\bei{\begin{enumerate}[(i)]}
\def\be{\begin{enumerate}[(1)]}
\def\ee{\end{enumerate}}
\def\bi{\begin{itemize}}
\def\ei{\end{itemize}}
\def\beg{\begin{eg}}
\def\eeg{\end{eg}}
\def\bd{\begin{defn}}
\def\ed{\end{defn}}
\def\bt{\begin{thm}}
\def\et{\end{thm}}
\def\bl{\begin{lemma}}
\def\el{\end{lemma}}
\def\bfac{\begin{fact}}
\def\efac{\end{fact}}
\def\Bar{\overline}
\def\bc{\begin{cor}}
\def\ec{\end{cor}}
\def\bp{\begin{prop}}
\def\ep{\end{prop}}
\def\bo{\begin{observe}}
\def\eo{\end{observe}}
\def\bas{\begin{assumption}}
\def\eas{\end{assumption}}
\def\beg{\begin{eg}}
\def\eeg{\end{eg}}
\title{High-dimensional scaling limits and fluctuations of online least-squares SGD with smooth covariance}
\author{
Krishnakumar Balasubramanian\thanks{Department of Statistics, University of California, Davis. Research of this author was supported in part by National Science Foundation (NSF) grant DMS-2053918. \texttt{kbala@ucdavis.edu}}
\and
Promit Ghosal \thanks{Department of Mathematics, Brandeis University. Research of this author was supported in part by National Science Foundation (NSF) grant DMS-2153661. \texttt{promit@brandeis.edu}}
\and
{Ye He} \thanks{School of Mathematics, Georgia Institute of Technology. Research of this author was supported in part by NSF TRIPODS grant CCF-1934568 awarded to UC Davis. \texttt{leohe@ucdavis.edu}}
}
\date{}
\begin{document}
\maketitle
{
\hypersetup{linkcolor=black}
}
\begin{abstract}
We derive high-dimensional scaling limits and fluctuations for the online least-squares Stochastic Gradient Descent (SGD) algorithm by taking the properties of the data generating model explicitly into consideration. Our approach treats the SGD iterates as an interacting particle system, where the expected interaction is characterized by the covariance structure of the input. Assuming smoothness conditions on moments of order up to eight orders, and without explicitly assuming Gaussianity, we establish the high-dimensional scaling limits and fluctuations in the form of infinite-dimensional Ordinary Differential Equations (ODEs) or Stochastic Differential Equations (SDEs). Our results reveal a precise three-step phase transition of the iterates; it goes from being ballistic, to diffusive, and finally to purely random behavior, as the noise variance goes from low, to moderate and finally to very-high noise setting. In the low-noise setting, we further characterize the precise fluctuations of the (scaled) iterates as infinite-dimensional SDEs. We also show the existence and uniqueness of solutions to the derived limiting ODEs and SDEs. Our results have several applications, including characterization of the limiting mean-square estimation or prediction errors and their fluctuations, which can be obtained by analytically or numerically solving the limiting equations.
\end{abstract}
{
\hypersetup{linkcolor=black}
\tableofcontents
}
\section{Introduction}\label{sec:Introduction}

Stochastic Gradient Descent (SGD) is an algorithm that was initially developed by \cite{robbins1951stochastic} for root-finding. Today, SGD and its variants are the most commonly used algorithms for training machine learning models, ranging from large-scale linear models to deep neural networks. One of the main challenges in understanding SGD is comprehending its convergence properties. In the case of fixed-dimensional problems, the learning theory and optimization communities have focused on providing non-asymptotic bounds, either in expectation or with high-probability, over the past two decades. However, such bounds often tend to be overly conservative in predicting the actual behavior of the SGD algorithm on large-scale statistical problems occurring in practice that are invariably based on specific data generating models. To address this, recent research has concentrated on characterizing the exact dynamics of SGD in large-scale high-dimensional problems. Specifically, the focus is on obtaining the precise asymptotic behavior of SGD and its fluctuations when the number of iterations or observations and the data dimension tend to infinity under appropriate scalings. The main idea behind this approach is to demonstrate that, under the considered scaling, the noise effects in SGD average out, so the exact asymptotic behavior and fluctuations are determined by a particular set of dynamical system equations.

Our goal in this work is to consider the SGD algorithm on a specific statistical problem, namely the linear regression problem, and provide a fine-grained analysis of its behavior under high-dimensional scalings. Specifically, we consider the linear regression model, $Y = X^\top \theta^* + \mathcal{E}$, where $\theta^*\in\mathbb{R}^d$ is the true regression coefficient, $X \in \mathbb{R}^d$ is the zero-mean input random vector with covariance matrix $\E[XX^\top]=\Sigma_d \in \mathbb{R}^{d\times d}$, $Y \in \mathbb{R}$ is the output or response random variable, and $\mathcal{E} \in \mathbb{R}$ is a zero-mean noise with at least finite-variance. For this statistical model, we consider minimizing the following population least-squares stochastic optimization problem
\begin{align*}
\min_{\theta \in \mathbb{R}^d}\mathbb{E}[(Y-\langle X,\theta\rangle)^2],
\end{align*}
using the online SGD with an initial guess $\theta^0 \in \mathbb{R}^d$, given by the following iterations
\begin{align}\label{eq:online SGD}
    \theta^{t+1} &= \theta^{t} +\eta \left(y^t - \langle x^t ,\theta^{t} \rangle \right) x^t,
    \end{align}
where $\theta^t\in\mb{R}^d$ is the output at time $t$. The sequence \purple{$\{x^t,y^t\}_{t\ge 0}$ corresponds to the observations, where $y^t=\langle x^t, \theta^* \rangle+\varepsilon^t$} and $\eta>0$ is the step-size parameter and plays a crucial in obtaining our scaling limits and fluctuations. Specific choices for $\eta$ will be detailed shortly in Section~\ref{sec:main}. The sequences $\{x^t\}_{t\ge 0}$ and $\{\varepsilon^t\}_{t\ge 0}$ are assumed to be independent and identical copies of the random vector $X$ and noise $\mathcal{E}$ respectively. Note in particular that for the online SGD in~\eqref{eq:online SGD}, the number of iterations is equal to the number of observations used.

In our analysis, we view the least-squares online SGD in~\eqref{eq:online SGD}, as a \emph{discrete} space-time interacting particle system, where the space-axis corresponds to the coordinates of the vector $\theta^{t}$ and the time-axis corresponds to the evolution of the algorithm. Specifically, note that the online SGD updates in~\eqref{eq:online SGD} can be viewed in the following coordinate-wise form. For any $1\le i\le d$,
\begin{align*}
    \theta^{t+1}_i 
    &= \theta^{t}_i + \eta x^t_i y^t- \eta \sum_{j=1}^{d} x^t_i x^t_j \theta^{t}_j \\
    &= \theta^t_i + \eta x^t_i \Big( \sum_{j=1}^d x^t_j \theta^*_j +\varepsilon{^t}-\sum_{j=1}^d x^t_j\theta^t_j \Big) \\
    &= \theta^t_i - \eta \sum_{j=1}^d x^t_ix^t_j (\theta^t_j-\theta^*_j)+\eta x^t_i \varepsilon^t.
\end{align*}
Now, defining the centralized iterates as $\dtheta^t\coloneqq\theta^t-\theta^*$ and letting $\dtheta^t_i$ denote its $i$-th coordinate for all $1\le i\le d$ and $t\ge 0$, the least-squares online SGD can be then alternatively be represented as the following interacting particle system:
\begin{align}\label{eq:centralized online SGD}
    \dtheta^{t+1}_i=\dtheta^t_i-\underbrace{\eta \sum_{j=1}^d x^t_ix^t_j \dtheta^t_j}_{\text{\emph{random} interaction}} +\eta x^t_i \varepsilon^t , \qquad 1\le i\le d,
\end{align}
where the particles $\{\dtheta^t_i\}_{1\le i\le d}$ are interacting and evolve over a discrete-time scale. In particular, the interaction among the particles $\{\dtheta^t_i\}_{1\le i\le d}$ is random for any $t$, and the \emph{expected interaction} is captured by the covariance matrix $\Sigma_d$ of the input vector $X$. Therefore, to analyze the high-dimensional asymptotic properties of the least-squares online SGD, we analyze the scaling limit and fluctuations of the interacting particle system given by \eqref{eq:centralized online SGD}. In particular, our limits are derived in the form of infinite-dimensional Ordinary Differential Equations (ODEs) and Stochastic Differential Equations (SDEs); for some background, we refer to~\cite{arnold1992ordinary, kallianpur1995stochastic,da2014stochastic}.

Our approach is also motivated by the larger literature available on analyzing interacting particle systems.  See, for example,~\cite{kipnis1998scaling, liggett1999stochastic, zeitouni2004random, sznitman2004topics, darling2008differential,spohn2012large}. The interacting particle system in~\eqref{eq:centralized online SGD} can either exhibit long-range or short-range interactions, depending on the structure of the covariance matrix $\Sigma_d$. The case when the covariance matrix $\Sigma_d$ is ``smooth'' in an appropriate sense, thereby prohibiting abrupt changes in the entries of the covariance matrix, corresponds to the regime of long-range interactions. Examples of such covariance matrices include bandable and circulant covariance matrices that arise frequently in practice~\citep{gray2006toeplitz}. 

\subsection{Preliminaries}\label{sec:Notations}
Before we proceed, we list the notations we make in this work. For a positive integer $a$, we let $[a]\coloneqq\{1,\ldots, a\}$. For vectors, superscripts denote time-index and subscripts denote coordinates. The space of square-integrable functions on $S$, a subset of Euclidean space, is denoted as $L^2(S)$ with the squared-norm $\lv g \rv^2_{L^2(S)}\coloneqq \smallint_S g(x)^2 dx$ for any $g\in L^2(S)$. The space of continuous functions in $[0,1]$, is denoted by $C([0,1])$, and is equipped with the topology of uniform convergence over $[0,1]$. The space of continuous functions with continuous derivatives up to $k^{\textrm{th}}$-order in $[0,1]$, is denoted by $C^k([0,1])$, and is equipped with the topology of uniform convergence in function value and derivatives up to $k^{\textrm{th}}$-order over $[0,1]$ (we mainly use $k=1,2$). For any topological space $\mc{H}$ and $\tau>0$,
$C([0,\tau];\mc{H})$ represents the space of $\mc{H}$-valued functions with continuous trajectories.

We also require the definition of a Gaussian random field or Gaussian random field process, that arise in characterizing the limiting behavior of the SGD iterates. We refer to~\cite{adler2007random} for additional background.
\begin{defn}
    A \textit{Gaussian random field} is defined as a random field $g$ on a parameter set $[0,1]$ for which the finite-dimensional distributions of $(g(x_1),\cdots, g(x_K))$ are multivariate Gaussian for each $1\le K<\infty$ and each $(x_1,\cdots,x_K)\in [0,1]^K$. A \textit{Gaussian random field process} is defined as a time-indexed random field $g$ on a parameter set $[0,\infty)\times [0,1]$ for which $\{(g(t,x_1),\cdots, g(t,x_K))\}_{t\ge 0}$ are multivariate Gaussian processes for each $1\le K<\infty$ and each $(x_1,\cdots,x_K)\in [0,1]^K$.
\end{defn}

\textbf{Space-time Interpolation.} 
Our approach starts by constructing a space-time stochastic process by performing a piecewise linear interpolation of the discrete particles in \eqref{eq:centralized online SGD}. This process, denoted by $\{\Bar{\Theta}^{d,T}(s,x)\}_{s\in [0,\tau],x\in [0,1]}$ is continuous both in time and in space. The spatial coordinate at the macroscopic scale is indexed by the set $[0,1]$ and is denoted by the spatial variable $x$. The spatial resolution at the microscopic scale is of order $1/d$. Let $T\in \mb{N}_+$ be a positive integer parameter. The parameters $\tau \in (0,\infty)$ and $1/T$ corresponds to the time-scale \purple{until} which we would like to observe the trajectory and the resolution of the time-axis respectively.  For any $\tau$, we consider the first $\lfloor \tau T \rfloor$ least-squares online SGD iterates. Specifically, $\lfloor\tau T\rfloor$ corresponds to the overall number of iterations, which also corresponds to the number of observations used.

We now describe how to construct the interpolation of the discrete particles in \eqref{eq:centralized online SGD}. Consider the function $\Theta^{d,T}(\cdot,\cdot):[0,\tau]\times [0,1]\to \mb{R}$ that satisfies the following conditions:
\begin{itemize}
    \item [(a)]\label{embedding 1 LLN} it is evaluated as the positions of SGD particles on the grid points with grid width $T^{-1}$ in the time variable and $d^{-1}$ in the space variable, i.e. for any $i\in [d]$ and any $0\le t\le \lfloor{\tau T}\rfloor$, $$\Theta^{d,T}\Big(\frac{t}{T},\frac{i}{d}\Big)=\dtheta^t_i.$$ We can artificially define $\dtheta^t_0=\dtheta^t_1$. 
    \item [(b)]\label{embedding 2 LLN} it is piecewise-constant in the time variable, i.e. for any $s\in [0,\tau]$,  $$\Theta^{d,T}(s,\cdot)=\Theta^{d,T}\Big(\frac{\lfloor{sT}\rfloor}{T},\cdot \Big).$$
    \item [(c)]\label{embedding 3 LLN} it is piecewise linear in the space variable, i.e. for any $x\in [0,1]$, 
    \[
    \Theta^{d,T}(\cdot,x)=(\lfloor{dx}\rfloor+1-dx)\Theta^{d,T}\Big(\cdot,\frac{\lfloor{dx}\rfloor}{d}\Big)+(dx-\lfloor{dx}\rfloor)\Theta\Big(\cdot,\frac{\lfloor{dx}\rfloor+1}{d}\Big).
    \]
\end{itemize}
From the above construction, we can express $\Theta^{d,T}$ explicitly based on $\{\dtheta^t_i\}_{i\in [d], 0\le t\le \lfloor{\tau T}\rfloor}$: for any $s\in [0,\tau]$ and any $x\in [0,1]$,
\begin{align*}
\begin{aligned}
    \Theta^{d,T} (s,x)&= (\lfloor{dx}\rfloor+1-dx)\Theta^{d,T}\Big(\frac{\lfloor{sT}\rfloor}{T},\frac{\lfloor{dx}\rfloor}{d}\Big)+(dx-\lfloor{dx}\rfloor)\Theta\Big(\frac{\lfloor{sT}\rfloor}{T},\frac{\lfloor{dx}\rfloor+1}{d}\Big) \\
    &=(\lfloor{dx}\rfloor+1-dx)\dtheta^{\lfloor{sT}\rfloor}_{\lfloor{dx}\rfloor}+(dx-\lfloor{dx}\rfloor)\dtheta^{\lfloor{sT}\rfloor}_{\lfloor{dx}\rfloor+1}.
\end{aligned}
\end{align*}
Condition (b) in particular implies that $\Theta^{d,T}$ is c\`adla\`g in the time variable. From condition (c), it is easy to observe that $\Theta^{d,T}$ is continuous with well-defined weak
derivative in space. Actually, the first order space derivative to $\Theta^{d,T}$ is well-defined for all $x$ except the grid points $\{0,\frac{1}{d},\cdots, \frac{d-1}{d},1 \}$. Next, we construct $\Bar{\Theta}^{d,T}$ which is piecewise linear both in time and  in space. For any $s\in [0,\tau]$ and $x\in [0,1]$,
\begin{align}\label{eq:approximation to Theta LLN}
    \Bar{\Theta}^{d,T}(s,x)\coloneqq\left( \lfloor{sT}\rfloor+1-sT \right)\Theta^{d,T}\Big(\frac{\lfloor{sT}\rfloor}{T},x\Big)+\left(sT-\lfloor{sT}\rfloor\right)\Theta^{d,T}\Big(\frac{\lfloor{sT}\rfloor+1}{T},x\Big).
\end{align}
From this construction we can immediately see that $\Bar{\Theta}^{d,T}\in C([0,\tau];C[0,1])$. Our main objective in this work, is to characterize the limiting behavior of the space-time stochastic process $\{\Bar{\Theta}^{d,T}(s,x)\}_{s\in [0,\tau],x\in [0,1]}$ when both $T$ and $d$ go to infinity, under appropriate scalings for appropriate choices of the step-size parameter $\eta$, and under various assumptions on the noise $\mathcal{E}$.

\subsection{Assumptions}\label{sec:assumptions LLN} 
We start with the following definition of a space-time stochastic process, which characterizes the data generating process for $X$ (or alternatively the sequence $\{ x^t\}_{t\geq 0}$). 
\begin{defn}[Data Generating Process]\label{def:white noise} Let $A, B$, and $E$ be real-valued symmetric functions defined on $[0,1]^2$, $[0,1]^4$ and $[0,1]^8$ respectively. $\{W(s,x)\}_{s\ge 0,x\in [0,1]}$ is a stochastic process which is white in the time variable, such that for all $s\ge 0$ and $x,x_1,\cdots,x_8\in [0,1]$:
\begin{align*}
   &\mb{E}\left[ W(s,x) \right]=0,\qquad \mb{E}\left[ W(s,x_1)W(s,x_2) \right]=A(x_1,x_2),\\
   \mb{E}\big[ \prod_{i=1}^4 W(&s,x_i) \big]=B(x_1,x_2,x_3,x_4), \qquad \mb{E}\big[ \prod_{i=1}^8 W(s,x_i)\big]=E(x_1,\cdots,x_8).
\end{align*}
\end{defn}
We now introduce the assumptions on the samples $\{x^t_i\}_{t\ge 0,i\in [d]}$ and random noises $\{\varepsilon^t\}_{t\ge 0}$.
\begin{assumption}\label{ass:tightness in space LLN}
The samples $\{x^t_i\}_{1\le i\le d,t\ge 0}$ and the noises $\{\varepsilon^t\}_{t\ge 0}$ are independent and satisfy
\begin{itemize}
    \item [\textup{(a)}]\label{ass:1.1-1} There exists a stochastic process $\{W(s,x)\}_{s\ge 0,x\in [0,1]}$, defined in Definition \ref{def:white noise} such that $$x^t_i=W\Big(\frac{t}{T},\frac{i}{d}\Big).$$
    \item [\textup{(b)}] \label{ass:1.1-4} There exists a universal constant $C_1>0$ such that for any $t\ge 0$, $$\mb{E}\big[\varepsilon^t\big]=0, \quad  \mb{E}\big[\big| \varepsilon^t \big|^2\big] \purple{=} \sigma_d^2\quad\text{and}\quad\mb{E}\big[\big| \varepsilon^t \big|^4\big] \purple{=} C_1 \sigma_d^4.
    $$
\end{itemize}
\end{assumption}
\noindent The noise-variance $\sigma_d^2$ plays a crucial role in our scaling limits. In particular, it is allowed to grow with $d$, with the growth rate determining the precise scaling limit of the SGD iterates. In the following, we drop the subscript $d$ for convenience. We now introduce the main assumption we make regarding smoothness of the covariance and higher-moments of the process $W$.

\begin{assumption}[Smoothness Conditions]\label{ass:continuous data} Stochastic process $\{W(s,x)\}_{s\ge 0,x\in [0,1]}$  satisfies the following smoothness conditions:
\begin{itemize}
    \item [1.]\label{ass:1.1-2} The function $A:[0,1]^2\to \mb{R}$ is such that $A(x,\cdot)\in C^1([0,1])$ for any $x\in [0,1]$. As a result, there exist constants $C_2,C_{3}$ such that for any $x,y,z\in [0,1]$, we have    \begin{align*}
        |A(x,y)|\le C_2\qquad \text{and} \qquad |A(x,z)-A(y,z)|\le C_{3}|x-y|.
    \end{align*}
    We further assume that there exists a  constant $C_4$ such that $$|A(x,x)+A(y,y)-2A(x,y)|\le C_{4}^2 |x-y|^2.$$
    \item [2.]\label{ass:1.1-3} For the function $B:[0,1]^4\to\mb{R}$, there exist constants $C_{5},C_{6},C_{7}>0$ such that for any $x_1,x_2,x_3,x_4\in [0,1]$, $|B(x_1,x_2,x_3,x_4)|\le C_{5}$ and 
    \begin{align*}
        &|B(x_1,x_3,x_1,x_3)+B(x_2,x_3,x_2,x_3)-2B(x_1,x_3,x_2,x_3)|\le C_{6}^2|x_1-x_2|^2,\\
        &\big| B(x_1,x_1,x_1,x_1)+B(x_2,x_2,x_2,x_2)+6B(x_1,x_1,x_2,x_2)-4B(x_1,x_1,x_1,x_2)\\
    &\qquad\qquad\qquad\qquad\qquad\qquad\qquad\quad-4B(x_1,x_2,x_2,x_2)\big|\le C_{7}^4 \big|x_1-x_2\big|^4.
    \end{align*}
    \item [3.] For the function $E:[0,1]^8\to \mb{R}$, there exist constants $C_8,C_9$ such that for all $x_1,\cdots,x_8\in [0,1]$, $E(x_1,\cdots,x_8)\le C_8$ and
    \begin{align*}
 \hspace{-0.5in}\big| & E(x_1,x_1,x_1,x_1,x_3,\cdots,x_6)+E(x_2,x_2,x_2,x_2,x_3,\cdots,x_6)\\
 &+6E(x_1,x_1,x_2,x_2,x_3,\cdots,x_6)-4E(x_1,x_1,x_1,x_2,x_3,\cdots,x_6)\\
 &\quad\qquad\qquad\qquad\qquad\qquad\qquad-4E(x_1,x_2,x_2,x_2,x_3,\cdots,x_6)\big|\le C_9^4 \big|x_1-x_2\big|^4.
    \end{align*}
\end{itemize}
\end{assumption}

\textbf{Motivation for the above assumptions.} Assumptions~\ref{ass:tightness in space LLN} and~\ref{ass:continuous data} are made to ensure tightness of the interpolated process in~\eqref{eq:approximation to Theta LLN}. In particular, the assumptions on the second and fourth-order moments are required to derive the tightness conditions needed to establish the scaling limits, with the second-order moment information actually showing up in the limit. Additionally, assumptions on the eighth-order moments are required to derive the tightness conditions needed to establish the fluctuations. In this case, the second-order moments information appears in the drift terms, and both the second and fourth-order moment information show up in the diffusion term.

From a statistical perspective, our assumptions above allow for a relative general class of distributions for the data and the noise sequence. \textcolor{black}{Specifically, we now characterize the relationship between the decay of the eigenvalues of $A$ and Assumption~\ref{ass:continuous data}. We proceed by considering the covariance function $A$ of the centered stochastic process $\{W(s,x)\}_{s\ge 0,x\in [0,1]}$ in the Mercer decomposition form as below,
\begin{align}\label{eq:covariance function extend example}
    A(x,y)= \sum_{k=1}^\infty \lambda_k \phi_k(x)\phi_k(y),
\end{align}
where $\{\lambda_k\}_{k\ge 1}$ is a sequence of positive coefficients (eigenvalues) in decreasing order and $\{\phi_k\}_{k\ge 1}$ is an orthonormal basis of $L^2([0,1])\cap C^1([0,1])$. In the following result, we provide a sufficient condition on the eigenvalues so that Assumption~\ref{ass:continuous data}-1 is satisfied.}

\begin{prop}\label{prop: sufficient condition equicts} \purple{Suppose $\{\phi_k\}_{k\ge 1}\subset L^2([0,1])\cap C^2([0,1])$ is an orthonormal basis of $L^2([0,1])$. 
Define 
$$
S_{N,0}\coloneqq \sum_{k=1}^N \lambda_k \phi_k(x)\phi_k(y) \quad \text{and}\quad 
S_{N,1}\coloneqq \sum_{k=1}^N \lambda_k \phi_k(x)\phi_k'(y),
$$
where $\phi_k'$ denotes the derivative of $\phi_k$. If $\{(\lambda_k,\phi_k)\}_{k\ge 1}$ satisfies that for all $k\ge 1$ and $i=0,1$,
\begin{align}\label{eq:covariance function sufficient condition}
    &\sup_{x\in [0,1]} |\phi_k^{(i)}(x)|\le C_{k,i} ,\qquad  \sum_{k=1}^\infty \lambda_k  C^2_{k,i}<\infty, \qquad \sum_{k=1}^\infty \lambda_k  C_{k,0}C_{k,2}<\infty
\end{align} 
then $\{S_{N,0}\}_{N\ge 1}$ is uniformly bounded and $\{S_{N,1}\}_{N\ge 1} 
$ is uniformly bounded and uniformly equicontinuous. Therefore, the process $\{W(s,x)\}_{s\ge 0,x\in [0,1]}$, with covariance function $A$ given in \eqref{eq:covariance function extend example}, satisfies Assumption \ref{ass:continuous data}-1.}
\end{prop}
\begin{proof}[Proof of Proposition \ref{prop: sufficient condition equicts}]\label{proof:sufficient condition equicts} 
\purple{The fact that $\{S_{N,0}\}_{N\ge 0}$ and $\{S_{N,1}\}_{N\ge 0}$ are uniformly bounded follows directly from \eqref{eq:covariance function sufficient condition}. Now, for any $x,y_1,y_2\in [0,1]$, 
 \begin{align*}
    | S_{N,1}(x,y_1)-S_{N,1}(x,y_2)|&\le  \sum_{k=1}^N \lambda_k  C_{k,0} |\phi_k'(y_1)-\phi_k'(y_2)|\le \big(\sum_{k=1}^\infty \lambda_k  C_{k,0} C_{k,2}\big) |y_1-y_2|.
 \end{align*}
 Therefore $\{S_{N,1}\}_{N\ge 0}$ is uniformly equicontinuous. Hence, according to the Arzel\`{a} Ascoli Theorem, $A(x,\cdot)\in C^1([0,1])$.}
 
 \purple{Due to symmetry, $A$ is bounded and Lipschitz in each variable and
 \begin{align*}
     &|A(x,x)+A(y,y)-2A(x,y)|=\lim_{N\to\infty} |S_{N,0}(x,x)+S_{N,0}(y,y)-2S_{N,0}(x,y)| \\
      \le & \lim_{N\to\infty} \sum_{k=1}^N \lambda_k \big( \phi_k(x)\phi_k(x)+\phi_k(y)\phi_k(y)-2\phi_k(x)\phi_k(y) \big)\le \big(\sum_{k=1}^\infty \lambda_k C_{N,1}^2\big) |x-y|^2.
 \end{align*}
 Therefore Assumption \ref{ass:continuous data}-1 is satisfied.}
\end{proof}


\begin{rem} \purple{We emphasize here for Proposition~\ref{prop: sufficient condition equicts}, we do not make either Gaussianity assumptions. While we mainly discussed Assumption \ref{ass:continuous data}-1, if we further assume that the centered process $\{W(s,x)\}_{s\ge 0,s\in [0,1]}$ is an elliptical process, Assumption \ref{ass:continuous data}-2 and Assumption \ref{ass:continuous data}-3 are also satisfied by an immediate consequence of the generalized Isserlis' Theorem. Alternatively, one could also proceed to do more explicit computations as above to related the eigenvalues and the smoothness conditions in Assumption \ref{ass:continuous data}-2 and Assumption \ref{ass:continuous data}-3. To summarize, our assumptions are based on the spatial structure of the $k^{\textrm{th}}$-order moments of the sample distribution for $k=2,4,8$, which in turn are satisfied as long as the eigenvalues of the covariance do not decay too slow.}  
\end{rem}
\begin{rem}
\purple{The eigen-decay assumptions are commonly made in the analysis of least-squares SGD algorithm in the fixed finite-dimensional and fixed infinite-dimensional setting; see, for example,~\cite{lin2017optimal,dieuleveut2017harder, pillaud2018statistical,mucke2019beating,bartlett2020benign}. In fact, they were originally made in the context of analyzing kernel ridge regression estimators (i.e., infinite-dimensional least-squares in reproducing kernel Hilbert spaces) by~\cite{caponnetto2007optimal}. From a practical statistical data analysis perspective, such assumptions are motivated by empirical observations that most big datasets exhibit decaying eigenvalues and are well-approximated by low-rank matrices~\citep{udell2019big}.}
\end{rem}

\textcolor{black}{In particular, \eqref{eq:covariance function extend example} and \eqref{eq:covariance function sufficient condition} could be used to characterize examples that cover a class of data sets that are widely used in practice. Suppose the data covariance matrix, $\Sigma_d$ has the following eigenvalue decomposition:
\begin{align}\label{eq:eigenvalue decomposition}
    \Sigma_d = \sum_{k=1}^\infty \lambda_k \mathbf{v}_k \mathbf{v}_k^\intercal, 
\end{align}
where $\{\mathbf{v}_k\}_{k\ge 1} $ is an orthonormal basis of $\mb{R}^d$ such that $\mathbf{v}_k^\intercal =(\phi_k(i/d))_{i\in [d]}$ for all $k\ge 1$. Then, Assumption \eqref{eq:covariance function sufficient condition} is satisfied when the sequence of eigenvalues $\{\lambda_k\}_{k\ge 1}$ decays not too slow enough. For example, in the case when $\{\phi_{k}\}_{k\ge 1}$ is the Fourier basis of $L^2([0,1])$, conditions in  \eqref{eq:covariance function sufficient condition} are satisfied when $\{\lambda_k\}_{k\ge 1}$ decay as fast as $k^{-5/2-\varepsilon}$ for arbitrary small positive $\varepsilon$. See, Section~\ref{sec:nonasym error} and Lemma \ref{lem:Sinusoidal graphon A} for an explicit computation in the Gaussian case for simplicity.}
%


We also emphasize that our analysis is general enough to allow for a certain degree of dependence within the data sequence $\{x^t\}_{t \geq 0}$ and within the error sequence $\{\varepsilon^t\}_{t \geq 0}$. Such assumptions are typically made in several applications, including reinforcement learning~\citep{meyn2022control} and sequential or online decision-making~\citep{chen2021statistical,khamaru2021near}. Furthermore, we could also allow for some level of dependency between the data and error sequences. We do not discuss these extensions in detail to keep our exposition simpler. 

\subsection{Main results}\label{sec:main}

\purple{In this section, we present our main results on the scaling limits of the least-squares online SGD trajectory and its fluctuation. The limits in our main results follow SDEs~\footnote{In some cases, the SDE limits degenerate to ODE limits.} that are driven by different Gaussian random field processes. In terms of convention, for any Gaussian random field process $\{\xi(s,x)\}_{s\ge 0,x\in [0,1]}$, $\mathrm{d} \xi(s,x)$ denotes the infinitesimal increment of the Gaussian random field process in the time variable $s$.} 

\purple{We also have the following convention regarding the initial conditions for the limiting processes come from the limit of space-time interpolated functions introduced in Section \ref{sec:Introduction}. At $t=0$, we only interpolate $\{\Delta \theta^0_i\}_{i\in [d]}$ in space and we denote the piecewise linear interpolation function by $\Theta^d$. Under our assumptions on $\{\Delta\theta^0_i\}_{i\in [d]}$ in each result below, the subsequential limit of $\Theta^d$ exists in $C([0,1])$ and we pick one such subsequential limit as the initial condition denoted as, $\Theta_0$.}  


\subsubsection{Scaling limits}
We now state our results on the scaling limits of the least-squares online SGD under different orders of noise variances. 
 
\begin{thm}[Scaling Limits]\label{thm:mainmean}
Let the initial conditions satisfy: For any $0\le i\le d$, there exist constants $R,L$ such that
\begin{align*}
\mb{E}\left[|\dtheta^0_i|^4 \right]\le R^4  \qquad\text{and}\qquad \mb{E}\left[ |\dtheta^0_i-\dtheta^0_{i-1}|^4  \right]\le L^4d^{-4}.
\end{align*}
Also, let Assumption \ref{ass:tightness in space LLN} and Assumption \ref{ass:continuous data} hold and let $\{\xi_1(s,x)\}_{s\in [0,\tau],x\in [0,1]}$ denote a centered Gaussian random field process \purple{such that for any $s_1,s_2\in [0,\tau]$ and $\ x,y\in [0,1]$, 
\begin{align}\label{eq:gaussian field variance high noise LLN}
       \mb{E}[\xi_1(s_1,x)\xi_1(s_2,y)]=\sigma_1(s_1,s_2,x,y)\quad\text{with}\quad \sigma_1(s_1,s_2,x,y)= (s_1\wedge s_2) A(x,y). 
    \end{align}}  Further, let there exist a uniform constant $C_{s,1}$ such that \textcolor{black}{for all $d\ge 1,T>0$ along the limiting sequence}, $\max(\eta d T, \sigma \eta T^{\frac{1}{2}})\le C_{s,1}$. Then, we have the following scaling limits. 

\begin{enumerate}
    \item \textbf{Low-noise, i.e., $\sigma^2/d^2T \to 0$:} Assume further that there exists a uniform positive constant $\alpha$ such that $\lim_{d,T\to\infty} \eta d T=\alpha$. Then for any $\tau\in (0,\infty)$, $\{\Bar{\Theta}^{d,T}\}_{d\ge 1,T>0}$ converges weakly to a function $\Theta$ as $d,T\to \infty$. Furthermore, the limit $\Theta\in C^1([0,\tau];C^1([0,1]))$ is the unique continuous solution to the following ODE:
\begin{align}\label{eq:mainmean ODE LLN}
    \partial_s \Theta(s,x)=-\alpha \int_0^1 A(x,y) \Theta(s,y) \mathrm{d}y, \quad\purple{s\in [0,\tau], x\in [0,1]}.
\end{align}
    \item \textbf{Moderate-noise, i.e., $\sigma^2/d^2T \to (0,\infty)$:} Assume further that there exist uniform positive constants $\alpha, \beta$ such that $\lim_{d,T\to\infty} \sigma^2/(d^2 T)=\beta^2 $ and $\lim_{d,T\to\infty} \eta d T=\alpha$ \textcolor{black}{(hence, as a consequence $\lim_{d,T\to\infty} \eta \sigma T^{\frac{1}{2}}=\alpha\beta$)}. Then for any $\tau\in (0,\infty)$, $\{\Bar{\Theta}^{d,T}\}_{d\ge 1,T>0}$ converges weakly to a process $\{\Theta(s,\cdot)\}_{s\in [0,\tau]}$ as $d,T\to \infty$. Furthermore, the limit $\Theta$ is the unique solution in $ C([0,\tau];C([0,1]))$ to the following SDE:
\begin{align}\label{eq:mainmean SDE LLN}
    \mathrm{d} \Theta(s,x)=-\alpha \int_0^1 A(x,y) \Theta(s,y) \mathrm{d}y \mathrm{d}s+ \alpha \beta \mathrm{d}\xi_1(s,x), \quad\purple{s\in [0,\tau], x\in [0,1]}.
\end{align}
    \item \textbf{High-noise, i.e., $\sigma^2/d^2T \to \infty$:} Assume further that there exists a uniform positive constant $\alpha$ such that $\lim_{d,T\to\infty} \eta \sigma T^{\frac{1}{2}}=\alpha$. Then for any $\tau\in (0,\infty)$, $\{\Bar{\Theta}^{d,T}\}_{d\ge 1,T>0}$ converges weakly to a process $\{\Theta(s,\cdot)\}_{s\in [0,\tau]}$ as $d,T\to \infty$. Furthermore, the limit $\Theta$ is the unique solution in $ C([0,\tau];C([0,1]))$ to the following SDE:
\begin{align}\label{eq:mainmean BM LLN}
    \mathrm{d} \Theta(s,x)= \alpha  \mathrm{d}\xi_1(s,x), \quad\purple{s\in [0,\tau], x\in [0,1]}.
\end{align}
\end{enumerate}
\end{thm}

\begin{rem}
Theorem~\ref{thm:mainmean} shows that under high-dimensional scalings, the limiting behavior of the SGD trajectory exhibits a three-step phase transition: It goes from being ballistic (i.e., characterized by an infinite-dimensional ODE) in the low-noise setting, to diffusive (i.e., characterized by an infinite-dimensional SDE) in the moderate-noise setting, to purely random in the high-noise setting. The boundaries of this three-step phase transition are precisely characterized, and explicit dependencies on the order of dimension, iterations and step-size choices are identified. In the moderate and high-noise setting, the covariance of the diffusion term $\xi_1$ is determined by the second moment function $A$ from Assumption~\ref{ass:continuous data}. We also remark that our initial conditions are made coordinate-wise and are rather mild.  
\end{rem}

\begin{rem}\label{rem:LLN smoothness} In the low-noise setting, according to \eqref{eq:mainmean ODE LLN}, we see that $\Theta(s,\cdot)$ has the same order of smoothness as $A(\cdot,y)$ for any $s\in[0,\tau]$ and $y\in [0,1]$. This phenomenon is more general, i.e., if we further assume that $A(\cdot,y)\in C^k([0,1])$, for some finite positive integer $k$ and for any $y\in [0,1]$, then it can be shown that $\Theta\in C^1([0,\tau]; C^k([0,1]))$. 
\end{rem}

\begin{rem}
 Assumption \ref{ass:continuous data} is required to show tightness results in Proposition \ref{prop:L2 tightness in time LLN} and Proposition \ref{prop:L2 tightness in time and space LLN}, and hence to prove existence of the weak limit. It is worth mentioning that in the low-noise setup when the noise variance satisfies $\sigma^2=O(d^2)$, Assumption \ref{ass:continuous data} can be relaxed by dropping the last condition on $B$ and the condition on $E$. The tightness results under the relaxed assumptions can be proved in the same way by considering second moments bounds rather than the fourth moments bounds in Proposition \ref{prop:L2 tightness in time LLN} and Proposition \ref{prop:L2 tightness in time and space LLN}. For the sake of simplicity and consistency of our analysis, the proof of Theorem \ref{thm:mainmean} is based on Assumption \ref{ass:continuous data}.
\end{rem}

\subsubsection{Fluctuations}

We now study the fluctuation of $\{\Delta\theta^t_i\}_{i\in [d],0\le t\le N}$ in the low-noise setting. In order to do so, we look at a re-scaled difference between $\{\Delta\theta^t_i\}_{i\in [d],0\le t\le N}$ and its scaling limit. Specifically, for any $s\in [0,\tau]$, $x\in [0,1]$, define
\begin{align}\label{eq:fluctuation}
    U^{d,T}(s,x)\coloneqq \gamma \big( \Bar{\Theta}^{d,T}(s,x)-\Theta(s,x) \big),
\end{align}
where $\gamma$ is the scaling parameter, and we expect $\gamma\to\infty$ as $d,T\to\infty$. We now state our fluctuation results.

\begin{thm}[Fluctuations]\label{thm:mainfluc} Let the initial conditions follow: For any $0\le i\le d$, there exist \purple{uniform} constants $D, M$ such that
\begin{align*}
\mb{E}\left[|U^{d,T}(0,\frac{i}{d})|^4 \right]\le D^4 \qquad \text{and}\qquad \mb{E}\left[ |U^{d,T}(0,\frac{i}{d})-U^{d,T}(0,\frac{i-1}{d})|^4  \right]\le M^4d^{-4}.
\end{align*}
\purple{Let $\{\xi_{2}(s,x)\}_{s\in [0,\tau],x\in [0,1]}$ and $\{\xi_{3}(s,x)\}_{s\in [0,\tau],x\in [0,1]}$ denote two independent centered Gaussian field processes such that for any $s_1,s_2\in [0,\tau]$ and $x,y\in [0,1]$,
    \begin{align}
     &\mb{E}[\xi_2(s_1,x)\xi_2(s_2,y)]=\sigma_2(s_1,s_2,x,y)\quad\text{with}\quad \sigma_2(s_1,s_2,x,y)= (s_1\wedge s_2) A(x,y),
     \label{eq:gaussian field covariance CLT}\\
     \begin{split}
         &\mb{E}[\xi_3(s_1,x)\xi_3(s_2,y)]= \sigma_3(s_1,s_2,x,y)\quad \text{with}\\
 &\sigma_3(s_1,s_2,x,y) =\int_0^{s_1\wedge s_2}\int_0^1\int_0^1  \Theta(s,z_1)\Theta(s,z_2)\Tilde{B}(x,z_1,x,z_2)\mathrm{d}z_{1}\mathrm{d}z_{2}\mathrm{d}s ,
     \end{split}\label{eq:gaussian parameters}
    \end{align}
  where  $\Tilde{B}(x_1,x_2,x_3,x_4)=B(x_1,x_2,x_3,x_4)-A(x_1,x_3)A(x_2,x_4)$ for any $x_1,x_2,x_3,x_4\in[0,1]$.} Furthermore, let the assumptions made in Theorem \ref{thm:mainmean} in the low-noise setup, i.e. \textcolor{black}{$\sigma^2/d^2T\to 0$}, hold. Let $\eta=\frac{\alpha}{dT}$ and assume that there exists a uniform constant $C_{s,2}$ such that for all $d\ge 1,T>0$ \textcolor{black}{along the limiting sequence}, we have $\max(\gamma T^{-\frac{1}{2}},\gamma d^{-1},\gamma \sigma d^{-1}T^{-\frac{1}{2}} )\le C_{s,2}$ . Then we have the following fluctuation results.
\begin{enumerate}
    \item \textbf{Particle interaction dominates:} Assume further that, as $d,T\to\infty$, we have 
    \begin{align*}
    T=o(d^2), \quad \sigma=O(d), \quad \gamma T^{-\frac{1}{2}}\to \zeta\in(0,\infty), \quad \text{and}\quad \gamma \sigma d^{-1}T^{-\frac{1}{2}}\to\beta\in[0,\infty),
    \end{align*}
    for some uniform constants $\zeta,\beta$. Then for any $\tau\in (0,\infty)$, the fluctuation of SGD particles, $\{U^{d,T}\}_{d\ge 1,T>0}$ converges in distribution to a function $U \in C([0,\tau];C([0,1]))$ as $d,T\to \infty$. Furthermore, for any $s\in [0,\tau]$, $x\in [0,1]$, the limit $U$ is the unique solution in $C([0,\tau];C([0,1]))$ to the SDE 
    \begin{align}\label{eq:mainthm fluc SDE CLT}
    \mathrm{d} U(s,x)=&-\alpha\int_0^1 A(x,y)U(s,y)\mathrm{d}y \mathrm{d}s+\alpha \beta \mathrm{d}\xi_2(s,x)\\
    &+ \alpha \zeta \mathrm{d}\xi_3(s,x), \quad \purple{s\in [0,\tau], x\in [0,1]} \nonumber.
    \end{align}
    \item \textbf{Noise dominates:} Assume further that, as $d,T\to\infty$, we have
    \begin{align*}
    \max(d,T^{\frac{1}{2}})\ll \sigma\ll dT^{\frac{1}{2}}, \qquad\text{and}\qquad \gamma \sigma d^{-1}T^{-\frac{1}{2}}\to\beta\in(0,\infty),
    \end{align*}
    for a uniform constant $\beta$. Then for any $\tau\in (0,\infty)$, the fluctuation of SGD particles, $\{U^{d,T}\}_{d\ge 1,T>0}$ converges in distribution to a function $U \in C([0,\tau];C([0,1]))$ as $d,T\to \infty$. Furthermore, for any $s\in [0,\tau]$, $x\in [0,1]$, the limit $U$ is the unique solution in $C([0,\tau];C([0,1]))$ to the SDE 
    \begin{align}\label{eq:mainthm fluc SDE CLT relative large variance}
    \mathrm{d} U(s,x)=-\alpha\int_0^1 A(x,y)U(s,y)\mathrm{d}y \mathrm{d}s+\alpha \beta \mathrm{d} \xi_2(s,x), \quad\purple{s\in [0,\tau], x\in [0,1]}.
    \end{align}
\end{enumerate}
\end{thm}

\begin{rem}
 \textcolor{black}{The setting in Theorem~\ref{thm:mainfluc} further splits the low-noise regime of Theorem~\ref{thm:mainfluc} into two sub-regimes. The first two regimes correspond to the case when the dominating terms leading to the fluctuations are due to the particle interaction (whose expectation is characterized by the covariance function $A$), and the noise variance respectively. In particular, when the particle interaction dominates, we have two independent diffusion terms; while the covariance of the process $\xi_2$ is determined only by the second-moment function $A$ in Assumption~\ref{ass:continuous data}, the covariance of the process $\xi_3$ appearing in the second diffusion term is determined by both $A$ and the fourth-moments function $B$. The entire limit identification result is provided in Theorem~\ref{thm:masterclttheorem}. }
\end{rem}

\begin{rem}\label{rem:interpolation error dominates} Besides the two sub-regimes introduced in Theorem \ref{thm:mainfluc}, there is another sub-regime, where the \textbf{interpolation error dominates}. Specifically,  under the assumptions in Theorem \ref{thm:mainfluc}, if we assume further that, as $d,T\to\infty $, we have
    \begin{align*}
     d=O(T^{\frac{1}{2}}), \qquad \sigma=O(T^{\frac{1}{2}})\, \qquad\text{and}\qquad   \gamma d^{-1}\to 0,
    \end{align*}
    then for any $\tau\in (0,\infty)$, the fluctuation of SGD particles, $\{U^{d,T}\}_{d\ge 1,T>0}$ converges in distribution to a function $U \in C([0,\tau];C([0,1]))$ as $d,T\to \infty$. Furthermore, for any $s\in [0,\tau]$, $x\in [0,1]$, the limit $U$ is the unique continuous solution to the following ODE (with random initial conditions)
    \begin{align}\label{eq:mainthm fluc SDE CLT large discretization error}
    \mathrm{d} U(s,x)=-\alpha\int_0^1 A(x,y)U(s,y)\mathrm{d}y \mathrm{d}s,, \quad\purple{s\in [0,\tau], x\in [0,1]}.
    \end{align}

This sub-regime comes from having to deal with the approximation of the integral on the right-hand side of \eqref{eq:mainmean ODE LLN} with its Riemann sum, and hence we this regime corresponds to the case when the interpolation error dominates. In this regime, we choose a small order of $\gamma$ to ensure the interpolation error vanishes. Doing so, fluctuations from the particle interaction and noises are suppressed in the limit. Therefore, we obtain the degenerate convergence to the ODE  as in~\eqref{eq:mainthm fluc SDE CLT large discretization error} for the fluctuations. 

However, due to the small order of $\gamma$ and assumptions in Theorem \ref{thm:mainfluc}, the initialization of \eqref{eq:mainthm fluc SDE CLT large discretization error} is trivial, and it makes the solution of \eqref{eq:mainthm fluc SDE CLT large discretization error} also trivial. This indicates that the choice of $\gamma$ of order $o(d)$ is too small, such that it suppresses all the fluctuations in the limit. It is worth mentioning that the interesting case to look at is when $\gamma=\Theta(d)$, in which the interpolation error does not vanish. However, to identify the limit in this case amounts to identifying the limit of the numerical integral error of the Riemann sum, which is beyond the scope the current work.   
\end{rem}

\begin{rem}\label{rem:weak to almost sure} \purple{We emphasize that  for each equation in Theorem \ref{thm:mainmean}, Theorem \ref{thm:mainfluc} and Remark \ref{rem:interpolation error dominates}, there exists a probability space in which the equation holds. The existence of such a probability space follows from the Skorokhod's Representation Theorem. For more details, we refer the readers to Section \ref{sec:proofidea}.}    
\end{rem}

\begin{rem}
Scaling limits and fluctuations developed in Theorems~\ref{thm:mainmean} and~\ref{thm:mainfluc} respectively, also hold (with slight modifications) for the online multiplier bootstrap version of SGD developed in~\cite[Equations (7) and (8)]{fang2018online}; such results may be leveraged for practical high-dimensional statistical inference. 
\end{rem}

\subsection{Related work}\label{sec:comparison} We now place our results in the context of the larger literature on SGD analysis.\\

\textbf{High-dimensional scaling limits of SGD.} \cite{arous2022high} studied the scaling limits of online SGD in the high-dimensional setting for a class of non-convex problems \textcolor{black}{in a statistical, model-based setup}. Their scaling limits are derived for finite-dimensional summary statistics of the online SGD, and no fluctuation results are provided. Furthermore, the precise scaling relationship between the dimension and the number of iterations, and the impact of the data generating process, in particular the covariance structure, is left unexplored. \cite{wang2017scaling} analyzed the online SGD algorithms for least-squares regression and principal component analysis, and derived the scaling limit of the empirical densities as solutions to PDEs. However, their analysis was restricted to the special case of isotropic covariance matrices and no fluctuation results are provided. See also~\cite{wang2017scalinga,veiga2022phase}, for related works on specific models. \textcolor{black}{High-dimensional dynamics of multi-pass (and online) SGD for least-squares problem was considered in \cite{paquette2021dynamics,paquette2022homogenization,paquetteimplicit2022}. Without assuming a true underlying statistical model, it was shown that a certain homogenized
stochastic differential equation characterizes the mean-squared loss precisely under a fairly general (non-Gaussian) assumptions on the input data. However, such works do not identify any phase transition phenomenon (from ballistic to diffusive behavior), are applicable only for a specific quadratic statistics of the SGD iterates, and do not characterize the fluctuations. }

The works by \cite{celentano2021high} and \cite{gerbelot2022rigorous} also characterized the asymptotic behaviors of variants of SGD for a class of optimization problems in the high-dimensional setting. Their approach was based on the so-called \emph{dynamical mean-field theory} from statistical physics. Different from our work, which considers online SGD on expected objective functions, both \cite{celentano2021high} and \cite{gerbelot2022rigorous} considered mini-batch SGD on finite-sum objective functions (or empirical risk minimization). \cite{celentano2021high} require isotropic sub-Gaussian inputs for their analysis. While \cite{gerbelot2022rigorous}  allows for \purple{non-isotropic} covariance, they required Gaussianity assumptions on the inputs. Finally, they only track a real-valued functional of the trajectory as the dimension grows, and no fluctuation results are provided.  

To our knowledge, our work provides the first result on characterizing the entire infinite-dimensional trajectorial limit and related fluctuations of the online SGD, for the specific problem of least-squares regression with smooth covariance structures.
\vspace{0.1in}

\textbf{Other high-dimensional analysis of SGD.} \textcolor{black}{Random matrix theory is also used to analyze gradient-based iterative algorithms ~\citep{deift2019universality,ding2022conjugate}. In the context of machine learning, \cite{paquette2021sgd,paquette2022halting} analyzed full and mini-batch SGD for high-dimensional least-squares linear regression, and established the limits as solutions to the Volterra equation. Results using random matrix theory for full-batch gradient descent dynamics for neural networks has been considered in \cite{bahigh2022}.} \cite{chandrasekher2022alternating,chandrasekher2021sharp} studied mini-batch SGD for certain high-dimensional non-convex problems using Gaussian process techniques. Their work relies heavily on the isotropy and Gaussianity assumptions. State-space approaches for high-dimensional analysis of online SGD were carried out in~\cite{tan2019online} under isotropic Gaussianity assumptions. The work of~\cite{arous2021online} also used a similar approach to establish high-probability bounds in a signal-recovery setup. Recently, high-dimensional normal approximation results and tail bounds are also established in~\cite{agrawalla2023high} and~\cite{durmus2021tight, durmus2022finite} respectively for online least-squares SGD. 

Mean-field analysis of SGD for overparametrized neural networks is also explored intensely in the recent past. While assuming growing parameter dimension, such works, however, assume the data-dimension is fixed. Due to the flurry of recent works in this direction, it is impossible to list them all here. We refer to \cite{chizat2018global, mei2018mean, sirignano2020meana, sirignano2020meanb, pham2021limiting, rotskoff2022trainability, ding2022overparameterization, sirignano2022mean,abbe2022merged,gess2022conservative} for a sampling of such works. 
\vspace{0.1in}

\textbf{Fixed-dimensional analyses of SGD.} 
The study of diffusion limits of SGD in the fixed-dimensional setting is a classical topic. We refer to \cite{kushner2003stochastic,kushner2012stochastic,benveniste2012adaptive,ljung2012stochastic} for a textbook treatment of this topic. The main idea behind such works is to show that appropriately time-interpolated SGD iterates converge to a multi-dimensional Ornstein-Uhlenbeck process, under specific scalings. Recently, \cite{li2019stochastic} developed a framework to approximate the dynamics of a relative general class of stochastic gradient algorithms by stochastic differential equations. See also,~\cite{krichene2017acceleration,li2018near,gupta2020some,fontaine2021convergence} for a partial list of other recent related works. 

Almost sure convergence and central limit theorems for SGD in the fixed-dimensional setting are also well-studied. See~\cite{mertikopoulos2020almost,sebbouh2021almost,liu2022almost} and references therein for almost-sure convergence results. In regard to CLT, we refer to \cite{polyak1992acceleration,ruppert1988efficient,duchi2016local,toulis2017asymptotic, asi2019stochastic,yu2020analysis,dieuleveut2020bridging, barakat2021stochastic, davis2023asymptotic} and references therein for a partial list of related works. We also highlight the works of~\cite{anastasiou2019normal} and~\cite{shao2022berry}, where non-asymptotic normal approximation for SGD is established. For a survey of expectation and high-probability bounds for SGD and its variants, see~\cite{bottou2018optimization}, and~\cite{lan2020first}. 
\vspace{0.1in}

\textbf{Scaling limits of other algorithms.} We remark that high-dimensional scaling limits of iterative sampling algorithms like the Random-Walk Metropolis (RWM) algorithm, Unadjusted Langevin Algorithm (ULA),  and Metropolis Adjusted Langevin Algorithm (MALA) are well-studied. For example,~\cite{pillai2012optimal} and~\cite{mattingly2012diffusion} characterize the scaling limits in the form of infinite-dimensional SDE (or equivalently as stochastic PDEs); see also the references therein for other related works in this direction. While being morally related to our approach, the sampling algorithms studied in those works correspond to a different setup than us, as the interactions are not characterized by any data generating process. More recently, \purple{focusing} on high-dimensional linear regression~\cite{qiu2022tap} and~\cite{mukherjee2022variational} study scaling limits of variational inference algorithms, which are not directly related to SGD. 
\vspace{0.1in}

\textbf{\textcolor{black}{Additional discussion.}} \purple{As discussed previously, in this work, we work in the long-range interaction regime. Alternatively, the case when covariance matrix is ``rough'', which allows for certain degree of abrupt changes between the entries (allowing, for example, various patterns of structured sparsity in $\Sigma_d$) corresponds to the regime of short-range interactions. For last few decades, a large number of works on interacting particle system focused on the connection between the scaling limit of the fluctuation in interacting particle systems and the Kardar-Parisi-Zhang (KPZ) equation. See, for example, \cite{BG97, BS10,quastel2011introduction, corwin2012kardar, CST18,G17,CGST18}. Many of those works demonstrated that the fluctuation of the height function of the associated particle system converges to the KPZ equation under weak noise scaling. A handful set of those works including some recent breakthroughs \citep{MQR21,DOV22,QS23} showed convergence towards the KPZ fixed point under the so called KPZ scaling of space, time and fluctuation. Our fluctuation results in the current work for the long-range interaction case does not reproduce the KPZ equation or the KPZ fixed point in the limit. However, we do believe that in the short-range regime, the scaling limits would lead to those unprecedented limiting behavior even for the seemingly simple problem of solving least-squares with online SGD. In a forthcoming work, we investigate the case of short-range interactions in detail. }

\section{Applications and examples}\label{sec:applications}

As an application of our main results in Theorem~\ref{thm:mainmean} and~\ref{thm:mainfluc}, in this section, we show how they can be leveraged to study certain specific properties of the least-squares online SGD, like the Mean Square Error ($\MSE$) and Predictive Error ($\PrE$). 
\vspace{.2cm}

\noindent \textbf{Mean Squared Error ($\MSE$) and \textbf{Predictive Error ($\PrE$)}:} For a given $d$ and $T$, the time-interpolated mean square error and prediction error of the least-squares online SGD estimator at time $s\in[0,\tau]$ are defined respectively as 
\begin{align*}
&\MSE^{d,T}(s)\coloneqq\frac{1}{d}\sum_{i=1}^d \Bar{\Theta}^{d,T}(\frac{\lfloor sT \rfloor}{T},\frac{i}{d})^2,\\
\PrE^{d,T}(s)&\coloneqq\frac{1}{d^2}\sum_{i,j=1}^d A\big(\frac{i}{d},\frac{j}{d}\big)\Bar{\Theta}^{d,T}\big(\frac{\lfloor sT \rfloor}{T},\frac{i}{d}\big)\Bar{\Theta}^{d,T}\big(\frac{\lfloor sT \rfloor}{T},\frac{j}{d}\big).
\end{align*}

\noindent Specifically, we first calculate the high-dimensional scaling limits of $\MSE$ and $\PrE$ in terms of the solutions to \eqref{eq:mainmean ODE LLN},\eqref{eq:mainmean SDE LLN} and \eqref{eq:mainmean BM LLN}, and show the decay properties of the limiting $\MSE$ and limiting $\PrE$ in the low-noise setup. We next calculate the fluctuations of the $\MSE$ and $\PrE$ in terms of the solutions to \eqref{eq:mainmean ODE LLN}, and \eqref{eq:mainthm fluc SDE CLT},\eqref{eq:mainthm fluc SDE CLT relative large variance} and \eqref{eq:mainthm fluc SDE CLT large discretization error}.

\subsection{Scaling limits and fluctuations of $\MSE$ and $\PrE$}\label{subsec:scaling limit}

Leveraging Theorem \ref{thm:mainmean}, we have the following result on the scaling limits of $\MSE$ and $\PrE$ of the least-squares online SGD. In particular, it exhibits the three-step phase-transition depending on the noise level. To proceed, we defined the limiting $\MSE$ and $\PrE$ as follows: 
\begin{align}\label{eq:msepre}
    \MSE(s)\coloneqq\int_0^1 \Theta(s,x)^2 dx,\qquad \PrE(s)\coloneqq\int_0^1\int_0^1 \Theta(s,x)A(x,y)\Theta(s,y)dxdy.
\end{align}


\begin{prop}\label{prop:general scaling limits} Assume that \textcolor{black}{the assumptions} in Theorem \ref{thm:mainmean} hold. Then, we have the following convergences,  \textrm{(i)} $\MSE^{d,T}(s)\to  \MSE(s)$  and  \textrm{(ii)} $\PrE^{d,T}(s)\to \PrE(s)$ (in probability in the low-noise setting and in distribution in the other two settings), provided one of the following scaling condition is satisfied:
\begin{enumerate}
    \item [\textup{(1)}] \textbf{Low-noise:} $\sigma d^{-1}T^{-\frac{1}{2}}\to 0$, $\eta d T\to \alpha\in (0,\infty)$ as $d,T\to\infty$ and $\Theta$ is the solution to \eqref{eq:mainmean ODE LLN}.
    \item [\textup{(2)}] \textbf{Moderate-noise:} $\sigma d^{-1}T^{-\frac{1}{2}}\to\beta\in (0,\infty)$, $\eta d T\to\alpha\in (0,\infty)$ as $d,T\to\infty$ and $\Theta$ is the solution to \eqref{eq:mainmean SDE LLN}.
    \item [\textup{(3)}] \textbf{High-noise:} $\sigma d^{-1}T^{-\frac{1}{2}}\to \infty$, $\eta \sigma T^{\frac{1}{2}}\to \alpha\in (0,\infty)$ as $d,T\to\infty$ and $\Theta$ is the solution to \eqref{eq:mainmean BM LLN}.
\end{enumerate}
\end{prop}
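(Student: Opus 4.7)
The plan is to deduce Proposition \ref{prop:general scaling limits} from Theorem \ref{thm:mainmean} by a continuous mapping argument, after absorbing three discretization errors: the Riemann-sum error in the spatial coordinate, the time-grid rounding caused by evaluating at $\lfloor sT\rfloor/T$ rather than $s$, and (for $\PrE$) the replacement of the discretized kernel $A(i/d, j/d)$ by the continuous kernel $A(x,y)$.

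The key continuity observation is that, for each fixed $s_0 \in [0,\tau]$, the functionals $\Phi(g) := \int_0^1 g(s_0,x)^2\,dx$ and $\Psi(g) := \int_0^1\int_0^1 A(x,y)\,g(s_0,x)\,g(s_0,y)\,dx\,dy$ are continuous on $C([0,\tau]; C([0,1]))$ (using the uniform bound on $A$ from Assumption \ref{ass:continuous data} for $\Psi$). Theorem \ref{thm:mainmean} together with the continuous mapping theorem thus yields
\[
\int_0^1 \bar{\Theta}^{d,T}(s,x)^2\,dx \;\longrightarrow\; \int_0^1 \Theta(s,x)^2\,dx \;=\; \MSE(s),
\]
and analogously $\Psi(\bar{\Theta}^{d,T}) \to \PrE(s)$, in the weak sense. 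When $\Theta$ is deterministic (the low-noise regime), weak convergence to a constant upgrades to convergence in probability, which accounts for the two modes of convergence stated in the proposition.

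It remains to compare $\MSE^{d,T}(s)$ and $\PrE^{d,T}(s)$ to these continuous functionals. Setting $u_i := \bar{\Theta}^{d,T}(\lfloor sT\rfloor/T, i/d)$ and using that $\bar{\Theta}^{d,T}(\lfloor sT\rfloor/T,\cdot)$ is piecewise linear with $\int_{(i-1)/d}^{i/d} \bar{\Theta}^{d,T}(\lfloor sT\rfloor/T, x)^2\,dx = \tfrac{1}{3d}(u_{i-1}^2 + u_{i-1}u_i + u_i^2)$, a short algebraic manipulation gives
\[
\MSE^{d,T}(s) - \int_0^1 \bar{\Theta}^{d,T}(\lfloor sT\rfloor/T, x)^2\,dx \;=\; \frac{1}{3d}\sum_{i=1}^d (u_i - u_{i-1})(2u_i + u_{i-1}).
\]
By Cauchy--Schwarz this is bounded by $\big(\tfrac{1}{d}\sum_i (u_i-u_{i-1})^2\big)^{1/2}\big(\tfrac{1}{d}\sum_i (|u_i|+|u_{i-1}|)^2\big)^{1/2}$; the first factor is $O_{\mb{P}}(d^{-1})$ by the spatial-increment moment bounds built into the tightness proofs underlying Theorem \ref{thm:mainmean} (which propagate the assumed $L^4 d^{-4}$ initial condition on $|\dtheta^0_i - \dtheta^0_{i-1}|^4$), while the second is bounded in probability. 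Replacing the evaluation at $\lfloor sT\rfloor/T$ by $s$ contributes an additional error of order $1/T$ by the piecewise linearity of $\bar{\Theta}^{d,T}$ in time. For $\PrE^{d,T}$, an analogous spatial Riemann-sum bound suffices, augmented by the Lipschitz estimate $|A(i/d, j/d) - A(x,y)| \le 2C_3/d$ from Assumption \ref{ass:continuous data}, which makes the kernel-replacement error of order $1/d$.

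The main obstacle is purely bookkeeping: decomposing $\MSE^{d,T}(s) - \MSE(s)$ (and similarly for $\PrE$) into the several error pieces above and controlling each via moment bounds already available from the proof of Theorem \ref{thm:mainmean}. No substantively new analytic input is required once that theorem and its attendant tightness estimates are in hand.
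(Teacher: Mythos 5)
Your proposal is correct and follows essentially the same route as the paper: approximate $\MSE^{d,T}(s)$ and $\PrE^{d,T}(s)$ by the continuous functionals $\int_0^1 \bar{\Theta}^{d,T}(s,x)^2\,dx$ and $\int_0^1\int_0^1 A(x,y)\bar{\Theta}^{d,T}(s,x)\bar{\Theta}^{d,T}(s,y)\,dx\,dy$ up to $o(1)$, then invoke the weak convergence from Theorem \ref{thm:mainmean} and continuous mapping (with the upgrade to convergence in probability when the limit is deterministic). The only difference is that you make the $o(1)$ step quantitative via the spatial- and temporal-increment moment bounds from Propositions \ref{prop:L2 tightness in time LLN} and \ref{prop:L2 tightness in time and space LLN}, whereas the paper asserts it directly from continuity of $\bar{\Theta}^{d,T}$; your version is the more complete justification of the same argument.
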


\noindent We next characterize the fluctuations of $\MSE^{d,T}$ and $\PrE^{d,T}$, i.e., $\gamma\left( \MSE^{d,T}-\MSE \right)$  and $\gamma \left( \PrE^{d,T}-\PrE\right)$, respectively.
\begin{prop}\label{prop:fluctuation of errors} Assume that 
\textcolor{black}{the assumptions} in Theorem \ref{thm:mainmean} and Theorem \ref{thm:mainfluc} hold. Then
\begin{enumerate}
    \item [\textup{(i)}] $\gamma\left( \MSE^{d,T}(s)-\MSE(s) \right)\to 2\int_0^1 \Theta(s,x)U(s,x)\mathrm{d} x$ in distribution,
    \item [\textup{(ii)}] $\gamma\left( \PrE^{d,T}(s)-\PrE(s)\right)\to 2\int_0^1 \int_0^1 \Theta(s,x)A(x,y)U(s,y)\mathrm{d}x\mathrm{d}y$ in distribution,
\end{enumerate}
provided one of the following scaling is satisfied:
\begin{enumerate}
    \item [\textup{(1)}] $\gamma=\zeta T^{\frac{1}{2}}$ for some $\zeta\in (0,\infty)$. $T=o(d^2), \sigma=O(d)$ as $d,T\to\infty$ and $\Theta$, $U$ are the solutions to \eqref{eq:mainmean ODE LLN} and \eqref{eq:mainthm fluc SDE CLT} respectively.  
    \item [\textup{(2)}] $\gamma=\beta \sigma^{-1}dT^{\frac{1}{2}}$ for some $\beta\in(0,\infty)$. $\max(d,T^{\frac{1}{2}})\ll \sigma \ll dT^{\frac{1}{2}}$ as $d,T\to\infty$ and $\Theta$, $U$ are the solutions to \eqref{eq:mainmean ODE LLN} and \eqref{eq:mainthm fluc SDE CLT relative large variance} respectively.
    \item [\textup{(3)}] $1\ll \gamma \ll d$, $d=O(T^{\frac{1}{2}})$, $\sigma=O(T^{\frac{1}{2}})$ as $d,T\to\infty$ and $\Theta$, $U$ are the solutions to \eqref{eq:mainmean ODE LLN} and \eqref{eq:mainthm fluc SDE CLT large discretization error} respectively.
\end{enumerate}
\end{prop}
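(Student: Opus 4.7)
The plan is to use the identity $\bar{\Theta}^{d,T}(s,x) = \Theta(s,x) + \gamma^{-1} U^{d,T}(s,x)$, coming directly from the definition~\eqref{eq:fluctuation}, to perform a first-order expansion of the quadratic functionals defining $\MSE^{d,T}$ and $\PrE^{d,T}$ around the deterministic scaling limit $\Theta$. This produces a linear-in-$U^{d,T}$ main term whose weak limit is identified via the continuous mapping theorem applied to Theorem~\ref{thm:mainfluc}, together with a deterministic Riemann-sum remainder and a quadratic remainder that both vanish under the prescribed scalings.

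Setting $s_T \coloneqq \lfloor sT\rfloor/T$ and expanding the square, I would decompose
\begin{align*}
\gamma\bigl(\MSE^{d,T}(s) - \MSE(s)\bigr) \;=\; I_1 + I_2 + I_3,
\end{align*}
where $I_1 \coloneqq \gamma\bigl[\tfrac{1}{d}\sum_i \Theta(s_T,i/d)^2 - \int_0^1 \Theta(s,x)^2\,dx\bigr]$, $I_2 \coloneqq \tfrac{2}{d}\sum_i \Theta(s_T,i/d)\,U^{d,T}(s_T,i/d)$, and $I_3 \coloneqq \tfrac{1}{d\gamma}\sum_i U^{d,T}(s_T,i/d)^2$. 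Remark~\ref{rem:LLN smoothness} gives $\Theta \in C^1([0,\tau];C^1([0,1]))$, so a standard Riemann-sum plus time-snapping estimate yields $|I_1| = O(\gamma/d) + O(\gamma/T)$. A direct computation shows that in each of the three scalings (1)--(3) both $\gamma/d \to 0$ and $\gamma/T \to 0$ (e.g.\ in (1), $\gamma = \zeta T^{1/2}$ and $T = o(d^2)$; in (2), $\gamma = \beta d T^{1/2}/\sigma$ with $\max(d,T^{1/2}) \ll \sigma$; in (3), $\gamma \ll d$ and $d = O(T^{1/2})$), so $I_1 \to 0$. Moreover $\gamma \to \infty$ in all three cases, and the fourth-moment tightness bounds from the proof of Theorem~\ref{thm:mainfluc} yield $\E[I_3] = O(\gamma^{-1}) \to 0$. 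For $I_2$, the joint convergence $\bigl(\bar{\Theta}^{d,T}, U^{d,T}\bigr) \to (\Theta, U)$ in $C([0,\tau];C([0,1]))^2$ follows from Theorem~\ref{thm:mainfluc} (the first coordinate is deterministic in the limit because $\bar{\Theta}^{d,T} - \Theta = \gamma^{-1} U^{d,T} \to 0$), and the continuous mapping theorem applied to the linear functional $v \mapsto 2\int_0^1 \Theta(s,x) v(s,x)\,dx$ gives $I_2 \to 2\int_0^1 \Theta(s,x) U(s,x)\,dx$ in distribution. Slutsky's theorem then assembles (i).

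The $\PrE$ argument is strictly parallel. Expanding $\bar{\Theta}^{d,T}$ in each factor of the bilinear form $\tfrac{1}{d^2}\sum_{i,j} A(i/d,j/d)\bar{\Theta}^{d,T}(s_T,i/d)\bar{\Theta}^{d,T}(s_T,j/d)$ produces (after using the symmetry of $A$): a weighted Riemann-sum error controlled by the Lipschitz bound on $A$ from Assumption~\ref{ass:continuous data} together with the regularity of $\Theta$, which again is $O(\gamma/d) + O(\gamma/T)$; a symmetric cross term $\tfrac{2}{d^2}\sum_{i,j} A(i/d,j/d)\Theta(s_T,i/d)U^{d,T}(s_T,j/d)$ that converges in distribution to $2\iint \Theta(s,x) A(x,y) U(s,y)\,dx\,dy$ by continuous mapping applied to the bounded bilinear functional $v \mapsto 2\iint \Theta(s,x) A(x,y) v(s,y)\,dx\,dy$; and a quadratic remainder of order $\gamma^{-1}$ controlled via boundedness of $A$ and the tightness of $U^{d,T}$.

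The main technical obstacle I anticipate is bridging the discrete Riemann sum in $I_2$ (and its $\PrE$ analogue) to the continuous functional before invoking the continuous mapping theorem; that is, showing $\bigl|I_2 - 2\int_0^1 \Theta(s_T,x) U^{d,T}(s_T,x)\,dx\bigr| \to 0$ in probability requires a uniform-in-$(d,T)$ spatial-modulus-of-continuity estimate on $U^{d,T}$. Such an estimate is not stated explicitly in Theorem~\ref{thm:mainfluc} but is produced during its tightness proof (analogous to Proposition~\ref{prop:L2 tightness in time and space LLN}), so the step reduces to invoking those fourth-moment increment bounds. Once this modulus estimate is in hand, the three-term decomposition closes under all three scalings and yields the claimed limits.
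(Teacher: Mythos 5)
Your proposal is correct and follows essentially the same route as the paper: the paper expands $\gamma(\MSE^{d,T}(s)-\MSE(s))$ into exactly the spatial Riemann-sum error, the time-snapping error, the cross term $-\tfrac{2}{d}\sum_i\Theta\,U^{d,T}$, and the quadratic remainder $\tfrac{1}{\gamma d}\sum_i (U^{d,T})^2$ (your $I_1$ merely merges the first two), verifies $O(\gamma/d)+O(\gamma/T)=o(1)$ under each of the three scalings, and identifies the cross term's limit via the weak convergence $U^{d,T}\todbt U$ in $C([0,\tau];C([0,1]))$. Your anticipated technical step of bridging the discrete sum to the continuous functional is handled in the paper exactly as you suggest, using the regularity of $\Theta$ and the tightness/continuity of $U^{d,T}$ established in the fluctuation proofs.
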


\subsection{Additional insights and specific example}\label{sec:nonasym error} In this section, we show that under Assumption \ref{ass:tightness in space LLN} (part (a)) and Assumption \ref{ass:continuous data}, the functions $A,B,E$ in Definition \ref{def:white noise} are the limiting descriptions of the corresponding moments of finite-dimensional data. We first show that a piecewise-constant function induced by the finite-dimensional covariance matrix $\Sigma_d$ of the data $\{x^t_i\}_{1\le i\le d}$ converges uniformly to the function $A$ defined in Definition \ref{def:white noise}. Before we state the convergence result, we introduce some necessary definitions and notations.

\begin{defn}\label{def:matrix embedding function}
    Given a symmetric $d\times d$ matrix $\Sigma$ with real entries, define a piecewise-constant function on $[0,1]^2$ by dividing $[0,1]^2$ into $d^2$ smaller squares each of length $1/d$ and set
    \begin{align*}
        W_{\Sigma}(x,y)\coloneqq\Sigma(i,j)\ \text{if}\ \lceil dx \rceil=i,\lceil py \rceil=j. 
    \end{align*}
\end{defn}
\noindent Recalling that $\Sigma_d$ denotes the covariance matrix of data $\{x^t_{i}\}_{1\le i\le d}$, it is easy to see that as $d\to\infty$, under Assumption \ref{ass:tightness in space LLN} and Assumption \ref{ass:continuous data}, we have 
$$
\lv W_{\Sigma_d}-A \rv_\infty \coloneqq \sup_{x,y\in [0,1]} \left| W_{\Sigma_d}(x,y)-A(x,y)\right| \to 0.
$$ 
Indeed, from the Definition \ref{def:matrix embedding function}, we have
\begin{equation*}
    W_{\Sigma_d}(x,y)-A(x,y)=\left\{
    \begin{aligned}
     &0  , \qquad &\lceil dx \rceil,\lceil dy \rceil \in \{0,1,\cdots, d\} \\
     & A\Big(\frac{\lceil dx \rceil}{d},\frac{\lceil dy \rceil}{d}\Big)-A(x,y) , \qquad &\text{otherwise}.
    \end{aligned}
    \right.
\end{equation*}
 Due to Assumption \ref{ass:continuous data}, we have
 \begin{align*}
    &\Big| A\Big(\frac{\lceil dx \rceil}{d},\frac{\lceil dy \rceil}{d}\Big)-A(x,y)\Big|\\
    \le &\Big|A\Big(\frac{\lceil dx \rceil}{d},\frac{\lceil dy \rceil}{d}\Big)-A\Big(x,\frac{\lceil dy \rceil}{d}\Big)  \Big|+\big|A\Big(x,\frac{\lceil dy \rceil}{d}\Big)-A(x,y)\big|\\
    \le& \frac{2C_{3}}{d}.
 \end{align*}
 Therefore $\left| W_{\Sigma_d}(x,y)-A(x,y)\right|\le 2C_{3} d^{-1}$ for all $x,y\in [0,1]$. The claimed convergence result hence follows from definition of $\lv\cdot\rv_\infty$. Similarly, we can extend Definition \ref{def:matrix embedding function} to any $m$-tensor and study the uniform convergence for functions defined $[0,1]^m$ with any $m\in \mb{N}$. In that way we can interpret $B$ and $E$ in Definition \ref{def:white noise} as the $C([0,1]^4)$-limit and the $C([0,1]^8)$-limit of the functions corresponding to the fourth moment tensor and the eighth moment tensor of $x^t$ respectively.

We now provide a concrete example of a model that satisfies our Assumption \ref{ass:continuous data}. Recall that we consider the data $\{x^t_i\}_{t\ge 1,i\in[d]}$ as being generated as the discretizations of the process $\{W(s,x)\}_{s\in[0,\tau],x\in [0,1]}$ defined in Definition \ref{def:white noise}. 
\begin{sinusoid}\label{ass:data example} For any $s\ge 0$, let $\{W(s,x)\}_{x\in [0,1]}$ be a centered stochastic process with the covariance function $A$ given by 
  \begin{align}\label{eq:Sinusoidal graphon A}
  A(x,y)=a_0+  \sum_{k=1}^\infty b_k \cos\left( 2\pi k \left( x-y \right) \right),\qquad \forall\ x,y\in [0,1].
 \end{align}
 such that there exists $\varepsilon>0$ such that
\begin{align}\label{eq:Sinusoidal graphon A parameter}
    \sum_{k=1}^\infty k^{5+\varepsilon} b_k^2<\infty.
\end{align}
\end{sinusoid}
\noindent Define the finite dimensional data covariance matrix as    \begin{align}\label{eq:Sinusoidal data covariance}        \Sigma_d(i,j)=a_0+\sum_{k=1}^\infty b_k \cos\left( 2\pi k (i-j)/d \right),\qquad \forall \ i,j\in [d].
    \end{align}
Let $A$ be as defined in \eqref{eq:Sinusoidal graphon A} and $W_{\Sigma_d}$ be as defined in Definition \ref{def:matrix embedding function}. When \eqref{eq:Sinusoidal graphon A parameter} holds, it is easy to see that 
$\lv A-W_{\Sigma_d} \rv_{\infty} \to 0$ as $d\to\infty$. Condition \eqref{eq:Sinusoidal graphon A parameter} guarantees that $A$ defined in \eqref{eq:Sinusoidal graphon A} satisfies Assumption \ref{ass:continuous data}.
    
If we further assume that $W(s,\cdot)$ is a Gaussian process, we have relatively easy expressions for the fourth and eighth moments, thanks to  Isserlis' Theorem. That is, for any $k\in \mb{N}$, we have 
    \begin{align}\label{eq:Isserlis thm}
        \mb{E}\big[ \prod_{i=1}^{2k} W(s,x_i) \big]=\sum_{p\in P_{2k}^2} \prod_{(i,j)\in p} \mb{E}\big[ W(s,x_i)W(s,x_j) \big],
    \end{align}
    where $P_{2k}^2$ is the set of all pairings of $\{1,\cdots, 2k\}$. Based on this, we have the following result.
\begin{lemma}\label{lem:Sinusoidal graphon A} The function $A$ defined by \eqref{eq:Sinusoidal graphon A} and \eqref{eq:Sinusoidal graphon A parameter} satisfies Assumption \ref{ass:continuous data}. Furthermore, if $W$ is Gaussian, its fourth moment $B$ and eighth moment $E$ satisfy Assumption \ref{ass:continuous data}. 
\end{lemma}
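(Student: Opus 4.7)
The plan is to verify Assumption \ref{ass:continuous data} first for $A$ directly from the Fourier series \eqref{eq:Sinusoidal graphon A}, and then, under the Gaussian hypothesis, for $B$ and $E$ by using Isserlis' theorem \eqref{eq:Isserlis thm} to reduce the required bounds to polynomial expressions in values of $A$.

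For $A$, the key observation is that Cauchy--Schwarz applied to \eqref{eq:Sinusoidal graphon A parameter} gives
\[
\sum_{k \ge 1} k^p |b_k| \le \Bigl(\sum_{k \ge 1} k^{2p - 5 - \varepsilon}\Bigr)^{1/2} \Bigl(\sum_{k \ge 1} k^{5 + \varepsilon} b_k^2\Bigr)^{1/2} < \infty
\]
whenever $p < 2 + \varepsilon/2$; in particular, for $p \in \{0,1,2\}$. The uniform bound on $A$ follows from $p=0$; termwise differentiation and $p=1$ deliver both $A(x,\cdot) \in C^1([0,1])$ and the Lipschitz estimate $|A(x,z) - A(y,z)| \le 2\pi |x-y| \sum_k k |b_k|$; and the identity
\[
A(x,x) + A(y,y) - 2 A(x,y) = 4 \sum_{k \ge 1} b_k \sin^2\bigl(\pi k (x-y)\bigr),
\]
combined with $|\sin\theta| \le |\theta|$ and $p=2$, yields the $(x-y)^2$ bound with $C_4^2 = 4\pi^2 \sum_k k^2 |b_k|$.

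For $B$ under the Gaussian assumption, Isserlis produces $B(x_1,\ldots,x_4) = A(x_1,x_2) A(x_3,x_4) + A(x_1,x_3) A(x_2,x_4) + A(x_1,x_4) A(x_2,x_3)$, so $|B| \le 3 C_2^2$. Direct expansion of the two $B$-combinations in Assumption \ref{ass:continuous data} collapses them to
\begin{align*}
&B(x_1,x_3,x_1,x_3) + B(x_2,x_3,x_2,x_3) - 2 B(x_1,x_3,x_2,x_3) \\
&\quad = 2 \bigl(A(x_1,x_3) - A(x_2,x_3)\bigr)^2 + A(x_3,x_3) \bigl(A(x_1,x_1) + A(x_2,x_2) - 2 A(x_1,x_2)\bigr)
\end{align*}
and $3 \bigl(A(x_1,x_1) + A(x_2,x_2) - 2 A(x_1,x_2)\bigr)^2$, respectively; these are $O(|x_1-x_2|^2)$ and $O(|x_1-x_2|^4)$ by the bounds on $A$ just established.

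The main work lies in $E$. The crux is to recognize the five-term combination in the $E$-inequality as the fourth-order tensor-product finite difference $\Delta^4 E := \Delta^1_{t_1} \Delta^1_{t_2} \Delta^1_{t_3} \Delta^1_{t_4} E(t_1,t_2,t_3,t_4,x_3,\ldots,x_6)$, where $\Delta^1_{t_i} f := f|_{t_i = x_1} - f|_{t_i = x_2}$; the weights $(1,-4,6,-4,1)$ match the binomial enumeration of $x_2$-assignments among four positions. By Isserlis, each of the $105$ pairings of $\{1,\ldots,8\}$ contributes a product of four $A$-values factoring over disjoint coordinates, so $\Delta^4$ distributes across factors. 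Organizing pairings by how $\{1,2,3,4\}$ is covered gives three cases: (i) two internal pairs within $\{1,2,3,4\}$ ($9$ pairings), so $\Delta^4 = \Delta^2 \cdot \Delta^2$ is bounded by $C_4^4 C_2^2 |x_1-x_2|^4$; (ii) one internal pair inside $\{1,2,3,4\}$, one inside $\{5,6,7,8\}$, and two cross pairs ($72$ pairings), so $\Delta^4 = \Delta^2 \cdot (\Delta^1)^2$ on constant background is bounded by $C_4^2 C_3^2 C_2 |x_1-x_2|^4$; (iii) all four positions in $\{1,2,3,4\}$ cross-paired to $\{5,6,7,8\}$ ($24$ pairings), so $\Delta^4 = (\Delta^1)^4$ is bounded by $C_3^4 |x_1-x_2|^4$. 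Summing with multiplicities gives $C_9^4 = 9 C_4^4 C_2^2 + 72 C_4^2 C_3^2 C_2 + 24 C_3^4$, while $|E| \le 105 C_2^4$ is immediate. The main obstacle is precisely this recognition of the five-term combination as $\Delta^4$ together with the bookkeeping over the $105$ Wick pairings; once the factorization across disjoint coordinate sets is identified, each case reduces mechanically to the lower-order bounds on $A$ from the first part.
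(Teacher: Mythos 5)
Your proof is correct, and for the parts that carry the real weight of the lemma --- the bounds on $B$ and $E$ --- it follows essentially the same route as the paper: Isserlis' theorem reduces everything to products of values of $A$, and the five-term combination in the $E$-condition is controlled by sorting the $105$ Wick pairings according to how the first four slots are covered (two internal pairs, one internal pair plus two cross pairs, or four cross pairs), exactly the three groups $9+72+24$ that appear in the paper's displayed expansion; your phrasing via the fourth-order finite difference $\Delta^1_{t_1}\Delta^1_{t_2}\Delta^1_{t_3}\Delta^1_{t_4}$ with binomial weights $(1,-4,6,-4,1)$ is a clean way to see why the factorization over disjoint coordinates works, but it is the same computation. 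The only genuine divergence is in the treatment of $A$ itself: the paper writes $A(x,y)=\bar A(|x-y|)$, places $\bar A$ in $H^{(5+\varepsilon)/2}([0,1])$ using \eqref{eq:Sinusoidal graphon A parameter}, and invokes the Sobolev embedding theorem to get $\bar A\in C^2([0,1])$, whereas you obtain $\sum_k k^2|b_k|<\infty$ by Cauchy--Schwarz (valid since $p=2<2+\varepsilon/2$) and conclude by termwise differentiation, together with the identity $A(x,x)+A(y,y)-2A(x,y)=4\sum_k b_k\sin^2(\pi k(x-y))$. Your route is more elementary and yields explicit constants such as $C_4^2=4\pi^2\sum_k k^2|b_k|$, at the cost of a slightly longer hands-on computation; the paper's is shorter but imports a nontrivial embedding theorem. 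Incidentally, your expansion of $B(x_1,x_3,x_1,x_3)+B(x_2,x_3,x_2,x_3)-2B(x_1,x_3,x_2,x_3)$ as $2\bigl(A(x_1,x_3)-A(x_2,x_3)\bigr)^2+A(x_3,x_3)\bigl(A(x_1,x_1)+A(x_2,x_2)-2A(x_1,x_2)\bigr)$ is the correct one; the corresponding display in the paper appears to contain a sign/argument typo, though the resulting bound is unaffected.
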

We emphasize here that the Gaussian assumption in Lemma~\ref{lem:Sinusoidal graphon A} is purely made for the sake of simplicity. Based on generalizations of Isserlis' theorem, computations similar to those required to prove Lemma~\ref{lem:Sinusoidal graphon A} could be carried out in the elliptical setting~\citep{zuo2021expressions}, mixture of Gaussian setting~\citep{vignat2012generalized} and beyond.

Next, we provide more explicit decay properties of the scaling limits that govern the limiting behavior of the finite-dimensional and finite-iteration $\MSE$ and $\PrE$ in the low-noise setup.

\begin{prop}\label{prop:exp decay of MSE} 
Invoke the assumptions in 
Theorem \ref{thm:mainmean}, under the low-noise set up.
\begin{itemize} 
    \item [\textup{(a)}]  
We have that $\PrE(\tau)\le \MSE(0)/\alpha \tau$, for all $\tau\in(0,\infty)$.
    \item [\textup{(b)}] If we further assume that for an orthonormal basis $\{\phi_i\}_{i=1}^\infty$ of $L^2([0,1])$, $A:[0,1]^2\to\mb{R}$ admits the following decomposition \textcolor{black}{$A(x,y)=\sum_{i=1}^K \lambda_i \phi_i(x) \phi_i(y)$}, where the sum converges in $L^2([0,1]^2)$ and \textcolor{black}{$\lambda_1\ge \lambda_2\ge \cdots \ge \lambda_K$, with $\lambda\coloneqq\lambda_K >0$}, we have that $\MSE(\tau)\le \MSE(0) \exp(-2\alpha \lambda \tau )$, for all $\tau\in (0,\infty)$.
\end{itemize}
\end{prop}

\noindent In the following, we provide two specific examples illustrating the above propositions.
\begin{enumerate}
    \item [(1)] Let covariance matrix be defined as $$\Sigma_d(i,j)=1+\cos\left( 2\pi (i-j)/d \right)\quad\text{for all}\quad i,j\in [d].$$ 
    Then, the function $A$ is given by $A(x,y)=1+\cos(2\pi (x-y))$ for all $x,y\in [0,1]$. 
    Note that, $A$ could also be represented as
    \begin{align*}
        A(x,y)=1+\frac{1}{2}\big(\sqrt{2}\cos(2\pi x)\big)\big(\sqrt{2}\cos(2\pi y)\big)+\frac{1}{2}\big(\sqrt{2}\sin(2\pi x)\big)\big(\sqrt{2}\sin(2\pi y)\big).
    \end{align*}
    Therefore $A$ satisfies the conditions in part (b) of Proposition \ref{prop:exp decay of MSE} with $\lambda_1=1,\lambda_2=\lambda_3={1}/{2}$. Hence, we  look at the $\MSE$ as defined in~\eqref{eq:msepre}. According to Proposition \ref{prop:exp decay of MSE}, the scaling limit of the $\MSE$ satisfies $\MSE(\tau)\le \MSE(0)\exp(-\alpha \tau)$ for any $\tau\in (0,\infty)$.
    \item [(2)]  Now, let the covariance matrix be given by 
    $$
    \Sigma_d(i,j)=\Big( \frac{\left| i-j \right|}{d}-\frac{1}{2} \Big)^2-2\Big( \frac{\left| i-j \right|}{d}-\frac{1}{2} \Big)^4\quad \text{for all}\quad i,j\in [d].
    $$
    Then the function $A$ is given by $A(x,y)=\left( \left| x-y \right|-\frac{1}{2} \right)^2-2\left( \left| x-y \right|-\frac{1}{2} \right)^4$ for all $x,y\in [0,1]$. By a Fourier series expansion, we also have that 
    \begin{align*}
        A(x,y)&=a_0+\sum_{k=1}^\infty \frac{b_k}{2} \big(\sqrt{2}\cos(2\pi x)\big)\big(\sqrt{2}\cos(2\pi y)\big)\\
        &\qquad+\sum_{k=1}^\infty \frac{b_k}{2} \big(\sqrt{2}\sin(2\pi x)\big)\big(\sqrt{2}\sin(2\pi y)\big),
    \end{align*}
where $a_0={7}/{120}$ and $b_k={6}/{\pi^2 k^4}$ for all $k\ge 1$. It can also be checked that \eqref{eq:Sinusoidal graphon A parameter} is satisfied with $\varepsilon=1$. As there is no positive uniform lower bound of the eigenvalues, we look at the $\PrE$ as defined in~\eqref{eq:msepre}. 
    According to part (a) of Proposition \ref{prop:exp decay of MSE}, we have for any $\tau\in (0,\infty)$, $\PrE(s)\le \MSE(0)/\alpha \tau$.
\end{enumerate}
\noindent More generally, to compute the scaling limits and the fluctuations, in particular in the moderate and high-noise setups, one invariably has to resort to numerical procedures. We refer, for example, to~\cite{lord2014introduction} regarding the details, and leave a thorough investigation of this as future work.

\section{Proof overview}\label{sec:proofidea}

\purple{The high level idea behind our proofs is to study the interpolated SGD iterates/fluctuations in~\eqref{eq:approximation to Theta LLN} as sequences of random variables in the space $C([0,\tau];C([0,1]))$. Under appropriate assumptions, we first prove that the sequences of random variables are tight in $C([0,\tau];C([0,1]))$; this forms a major portion of our analysis. As a consequence, we can interpret different terms in the algorithm as sequences of random variables and each subsequence has a further subsequence that converges jointly in distribution with the sequential weak limit in $C([0,\tau];C([0,1]))^{\otimes{m}}$($m$ is the number of terms in the algorithm).}

\purple{Next, we identify the jointly sequential weak limits by examining the sequential weak limit of each term. Thanks to the Skorokhod representation theorem, the joint sequential weak limits can be represented by an ODE/SDE whose solution equals to sequential weak limit of the interpolated SGD iterates/fluctuations in distribution. Last, we prove that all the sequential weak limits are the same and therefore the sequences of interpolated SGD iterates/fluctuations converge weakly to unique limits by showing the uniqueness of path-wise solutions to the ODE/SDEs. For the convenience of readers, we stated the Skorokhod Representation Theorem in the following.
\begin{thm}[Skorokhod's Representation Theorem (Theorem 6.7 in \cite{billingsley2013convergence})]\label{them:sko thm} Suppose $\{\mb{P}_n\}_{n\ge 1}$ and $\mb{P}$ are probability measures on $\mb{R}^d$ (provided with its Borel $\sigma$-algebra) such that $\{\mb{P}_n\}_{n\ge 1}$ converges to $\mb{P}$ weakly.
Then there is a probability space $(\Omega, \mc{F}, \mb{P})$ on which are defined $\mb{R}^d$-valued random
variables $\{X_n\}_{n\ge 1}$ and $X$ with distributions $\{\mb{P}_n\}_{n\ge 1}$ and $\mb{P}$ respectively, such
that $X_n$ converges to $X$ almost surely.    
\end{thm}
}

\subsection{For Theorem~\ref{thm:mainmean} }\label{sec:KTC}

Our first result shows the tightness of $ \{\Bar{\Theta}^{d,T}(\cdot,\cdot)\}_{d\ge 1, T> 0}$ in the space ${C}\left([0,\tau]; {C}([0,1])\right)$. It is based on two technical results (see Proposition \ref{prop:L2 tightness in time LLN} and Proposition \ref{prop:L2 tightness in time and space LLN}) which forms the major part of our analysis.
\begin{thm}[Tightness]\label{thm:tightness of continuous interpolation LLN}
Let Assumptions \ref{ass:tightness in space LLN} and \ref{ass:continuous data} hold, and the initial conditions in Theorem \ref{thm:mainmean} are satisfied. Further, suppose that there is a uniform constant $C_{s,1}>0$ such that $\max(\eta d T,\eta \sigma T^{\frac{1}{2}})\le C_{s,1}$. Then $ \{\Bar{\Theta}^{d,T}\}_{d\ge 1, T> 0}$ defined in \eqref{eq:approximation to Theta LLN} is tight in ${C}\left([0,\tau]; {C}([0,1])\right)$. Hence, any subsequence of $ \{\Bar{\Theta}^{d,T}\}_{d\ge 1, T> 0}$ has a further weakly convergent subsequence with its limit in ${C}\left([0,\tau]; {C}([0,1])\right)$.
\end{thm}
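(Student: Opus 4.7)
The plan is to invoke a Kolmogorov--Chentsov type tightness criterion for $C([0,\tau]\times[0,1])$-valued random variables (which, since $[0,1]$ is compact, is the same space as $C([0,\tau];C([0,1]))$). Concretely, it suffices to establish three uniform-in-$(d,T)$ moment bounds: an initial-time bound on $\bar{\Theta}^{d,T}(0,\cdot)$ in $C([0,1])$, a fourth-moment time-modulus of the form $\mathbb{E}\bigl[|\bar{\Theta}^{d,T}(s,x)-\bar{\Theta}^{d,T}(s',x)|^4\bigr]\le K|s-s'|^{1+\delta_1}$ uniform in $x$, and a fourth-moment space-modulus $\mathbb{E}\bigl[|\bar{\Theta}^{d,T}(s,x)-\bar{\Theta}^{d,T}(s,y)|^4\bigr]\le K|x-y|^{1+\delta_2}$ uniform in $s$. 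Together with Prokhorov's theorem this yields weakly convergent subsequences with limits in $C([0,\tau];C([0,1]))$.

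The initial-time estimate follows directly from the hypothesized bounds $\mathbb{E}[|\dtheta^0_i|^4]\le R^4$ and $\mathbb{E}[|\dtheta^0_i-\dtheta^0_{i-1}|^4]\le L^4 d^{-4}$: the second transfers, via the piecewise-linear spatial interpolation, to $\mathbb{E}[|\bar{\Theta}^{d,T}(0,x)-\bar{\Theta}^{d,T}(0,y)|^4]\le K|x-y|^{4}$, which (with the first bound) gives both H\"older regularity in space and a uniform sup-norm fourth-moment bound at $s=0$. The time- and space-moduli are precisely the content of Proposition \ref{prop:L2 tightness in time LLN} and Proposition \ref{prop:L2 tightness in time and space LLN}, which I would invoke as black boxes at this stage; assembling them with the initial estimate and Kolmogorov's criterion closes tightness, and Prokhorov supplies the subsequential weak limit in $C([0,\tau];C([0,1]))$.

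The heart of the argument lies in those two propositions. For the temporal modulus, the coordinatewise recursion \eqref{eq:centralized online SGD} writes one step as a conditionally centered particle-interaction term $-\eta\sum_j x^t_i x^t_j\dtheta^t_j$ plus a noise contribution $\eta x^t_i\varepsilon^t$. Unfolding a telescope of $\lfloor(s'-s)T\rfloor$ such steps and computing the fourth moment produces contributions of orders $(\eta dT)^4$ from the quadratic form, $(\eta\sigma T^{1/2})^4$ from the noise, and mixed terms; the hypotheses $\eta dT\le C_{s,1}$ and $\eta\sigma T^{1/2}\le C_{s,1}$ are exactly what keep these bounded, and combined with the uniform bounds on $|A|$ and $|B|$ in Assumption \ref{ass:continuous data} they deliver the required $|s-s'|^{1+\delta_1}$ decay. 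For the spatial modulus, one propagates $\mathbb{E}[|\dtheta^t_i-\dtheta^t_{i-1}|^4]\le O(d^{-4})$ inductively in $t$, noting that from the recursion this difference inherits factors $x^t_i-x^t_{i-1}$; their second, fourth and eighth moments are controlled by the three difference conditions in Assumption \ref{ass:continuous data}, giving $\mathbb{E}[(x^t_i-x^t_{i-1})^2]\le Cd^{-2}$, $\mathbb{E}[(x^t_i-x^t_{i-1})^4]\le Cd^{-4}$, and the analogous control over squared quadratic forms.

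The main technical obstacle is this inductive propagation of the spatial fourth moment. Expanding $|\dtheta^{t+1}_i-\dtheta^{t+1}_{i-1}|^4$ from the recursion generates many cross terms coupling coordinates through products $x^t_i x^t_j x^t_k x^t_\ell$ weighted by current iterates, and naive estimation loses factors of $d$. The resolution is to isolate on each step the pieces depending on $x^t_i-x^t_{i-1}$, use the symmetry and decomposition built into the difference conditions on $B$ and $E$, apply Cauchy--Schwarz so that the eighth-moment control on $E$ absorbs an $O(d^2)$-squared sum while the spatial-difference bound supplies the necessary $d^{-4}$, and close the induction by a Gr\"onwall argument whose constant depends only on $\eta dT$ and $\eta\sigma T^{1/2}$. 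Once these moduli are established, tightness in $C([0,\tau];C([0,1]))$ is immediate from Kolmogorov's criterion, and Prokhorov then yields the weakly convergent subsequence asserted.
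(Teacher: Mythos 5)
Your proposal follows the paper's proof essentially verbatim: the paper also verifies tightness via the Kolmogorov criterion by checking tightness of the initial value and a uniform fourth-moment modulus bound $\mb{E}[|\Bar{\Theta}^{d,T}(s_1,x_1)-\Bar{\Theta}^{d,T}(s_2,x_2)|^4]\le C(|s_1-s_2|^2+|x_1-x_2|^4)$, obtained by combining Propositions \ref{prop:L2 tightness in time LLN} and \ref{prop:L2 tightness in time and space LLN} with the initial conditions, and your sketch of how those propositions are proved (telescoping the recursion, splitting into drift/centered-interaction/noise terms, using the moment conditions on $A$, $B$, $E$ and a discrete Gr\"onwall argument controlled by $\eta dT$ and $\eta\sigma T^{1/2}$) matches the paper's argument. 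The only cosmetic difference is that you state the moduli with generic exponents $1+\delta_i$, whereas the two-parameter Kolmogorov criterion actually needs the exponents $2$ and $4$ that the propositions deliver.
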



\begin{thm}[Limit Identification]\label{thm:LLN} Invoke the conditions in Theorem \ref{thm:tightness of continuous interpolation LLN}. Then for any $\tau>0$ and any subsequence $\{\Theta^{d_k,T_k}\}_{k\ge 1}$ of $\{\Theta^{d,T}\}_{d\ge 1,T>0}$, there further exists a  subsequence converging weakly to a function $\Theta\in C([0,\tau];C([0,1]))$ as $d_k,T_k\to \infty$. Furthermore, for any bounded smooth function $f:[0,\tau]\to \mb{R}$, for any $s\in [0,\tau]$, $x\in [0,1]$, we have 
\begin{align*}
    &\qquad -\int_0^s \Theta( u ,x)  f'(u) \mathrm{d}u+ f(s){\Theta}(s,x)-f(0){\Theta}(0,x) \nonumber \\
    &=-\eta d T\big( \int_0^{s}\int_0^1 f(u)A(x,y)\Theta(u,y)\mathrm{d}y\mathrm{d}u+o(1)\big)+\sigma^2 \eta^2 T \big( \int_0^s f(u)\mathrm{d}\xi_1(u,x)+o(1)\big).
\end{align*}
where $\{\xi_1(s,x)\}_{s\in [0,\tau],x\in [0,1]}$ is a Gaussian field process with covariance given by \eqref{eq:gaussian field variance high noise LLN}.
\end{thm}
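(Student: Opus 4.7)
The existence of a weakly convergent subsequence in $C([0,\tau];C([0,1]))$ is immediate from the tightness result in Theorem~\ref{thm:tightness of continuous interpolation LLN}, so the real work is to identify the limit through the displayed equation. My plan is to start from a discrete integration-by-parts identity for the SGD iterates tested against $f$, and then pass to the limit term-by-term using the tightness estimates, a Riemann-sum approximation for the drift, and a functional martingale central limit theorem for the noise.

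Fix $(s,x)\in[0,\tau]\times[0,1]$, set $i=\lfloor dx\rfloor$ and $N=\lfloor sT\rfloor$. Telescoping $\sum_{t=0}^{N-1}f(t/T)(\dtheta_i^{t+1}-\dtheta_i^t)$ by Abel summation on the left-hand side and substituting the recursion~\eqref{eq:centralized online SGD} on the right-hand side produces the identity $\mathrm{LHS}_i^N = D_i^N + R_i^N + M_i^N$, where
\begin{align*}
D_i^N &\coloneqq -\eta \sum_{t=0}^{N-1} f(t/T)\sum_{j=1}^d A\bigl(\tfrac{i}{d},\tfrac{j}{d}\bigr)\dtheta_j^t,\\
R_i^N &\coloneqq -\eta \sum_{t=0}^{N-1} f(t/T)\sum_{j=1}^d \Bigl(x_i^t x_j^t - A\bigl(\tfrac{i}{d},\tfrac{j}{d}\bigr)\Bigr)\dtheta_j^t,\\
M_i^N &\coloneqq \eta\sum_{t=0}^{N-1} f(t/T)\, x_i^t \varepsilon^t,
\end{align*}
and $R_i^N+M_i^N$ is a zero-mean martingale with respect to $\mathcal{F}_t=\sigma(\{x^u,\varepsilon^u\}_{u<t})$. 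Along the convergent subsequence $\Bar\Theta^{d_k,T_k}\Rightarrow\Theta$, the continuous mapping theorem combined with the smoothness of $f$ yields $\mathrm{LHS}_i^N \Longrightarrow f(s)\Theta(s,x)-f(0)\Theta(0,x)-\int_0^s f'(u)\Theta(u,x)\,du$. For $D_i^N$, the Lipschitz regularity of $A$ in Assumption~\ref{ass:continuous data} and the uniform fourth-moment bounds on $\dtheta_j^t$ established in Propositions~\ref{prop:L2 tightness in time LLN}--\ref{prop:L2 tightness in time and space LLN} (used already to prove the tightness in Theorem~\ref{thm:tightness of continuous interpolation LLN}) let me replace the sum in $j$ by the Riemann integral $\int_0^1 A(x,y)\Theta(u,y)dy$ and the sum in $t$ by the time integral $\int_0^s f(u)\,du$, extracting the prefactor $\eta dT$ and giving $D_i^N=-\eta dT\bigl(\int_0^s\!\int_0^1 f(u)A(x,y)\Theta(u,y)\,dy\,du+o_{\mathbf P}(1)\bigr)$.

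For the martingale pieces, $R_i^N$ has predictable quadratic variation of order $\eta^2 dT \cdot \sup_{j,t}\mathbf{E}[(\dtheta_j^t)^2]$, which under the scaling constraint $\eta dT=O(1)$ is $O(1/d)=o_{\mathbf P}(1)$ and so is absorbed into the diffusion $o(1)$ correction. The dominant martingale $M_i^N$ has predictable quadratic variation
\begin{align*}
\langle M_i\rangle_N = \eta^2\sum_{t=0}^{N-1} f(t/T)^2\,\mathbf{E}\bigl[(x_i^t)^2(\varepsilon^t)^2\,\bigl\vert\,\mathcal{F}_t\bigr] = \sigma^2\eta^2 T\,A\bigl(\tfrac{i}{d},\tfrac{i}{d}\bigr)\int_0^s f(u)^2\,du + o_{\mathbf P}(1),
\end{align*}
and the cross-variation between two spatial indices $i,i'$ converges to $\sigma^2\eta^2 T\,A(x,x')\int_0^s f(u)^2\,du$, which matches the covariance of $\int_0^s f\,d\xi_1(\cdot,\cdot)$ dictated by~\eqref{eq:gaussian field variance high noise LLN}. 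A functional martingale CLT -- with the Lindeberg condition verified through the fourth/eighth-moment bounds from Assumptions~\ref{ass:tightness in space LLN}(b) and~\ref{ass:continuous data} -- then yields $M_i^N = \sigma^2\eta^2 T\bigl(\int_0^s f(u)\,d\xi_1(u,x)+o_{\mathbf P}(1)\bigr)$, and combining the four displays delivers the target identity.

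\textbf{Main obstacle.} The principal technical difficulty is the \emph{joint} identification of the limiting Gaussian random field $\xi_1$ across the whole spatial domain $x\in[0,1]$, so that the pointwise Gaussian limits above are promoted to a field-level statement with the covariance structure~\eqref{eq:gaussian field variance high noise LLN}. This requires (i) the two-point conditional second-moment computation matching $A(x,x')$, (ii) a Lindeberg bound uniform in $x$ -- which is precisely where the eighth-moment smoothness on $E$ in Assumption~\ref{ass:continuous data} enters -- and (iii) joint tightness in the space variable, which is already supplied by the fourth-moment spatial-increment estimates underlying Proposition~\ref{prop:L2 tightness in time and space LLN}. A secondary subtlety is keeping the drift Riemann-sum remainder and the $R_i^N$ contribution genuinely $o_{\mathbf P}(1)$ after multiplication by the potentially large prefactor $\eta dT=O(1)$; this is handled by the same uniform fourth-moment and spatial-Lipschitz bounds.
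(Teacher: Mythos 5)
Your proposal follows essentially the same route as the paper's proof: Abel summation against the test function $f$, decomposition of the right-hand side into the expected-interaction drift, the centered-interaction fluctuation, and the noise martingale (the paper's $N_{i,1}^s$, $N_{i,3}^s$, $N_{i,2}^s$), with the drift handled by a Riemann-sum approximation, the centered-interaction term killed by a second-moment/variance computation, and the noise term identified via a CLT whose covariance reproduces \eqref{eq:gaussian field variance high noise LLN}. The only quibble is bookkeeping: the variance of your $R_i^N$ is of order $\eta^2 d^2 T\sup_{j,t}\mathbf{E}[(\dtheta_j^t)^2]=O(T^{-1})$ rather than the order you state, but this is still $o(1)$ and does not affect the argument.
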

\hspace{0.1in}

\noindent The above theorem is proved in Section \ref{sec:LLN proofs}. As an immediate consequence, we have that:
\begin{itemize}
    \item [(a)] When $\eta d T\to\alpha$ and $\sigma^2/d^2 T\to 0$ as $d,T\to\infty$, we get
    \begin{align}\label{eq:LLN weak form}
        \hspace{-0.2in}-\int_0^s \Theta( u ,x)  f'(u) \mathrm{d}u+ f(s){\Theta}(s,x)-f(0){\Theta}(0,x) =-\alpha \int_0^{s}\int_0^1 f(u)A(x,y)\Theta(u,y)\mathrm{d}y\mathrm{d}u.
    \end{align}
    \item [(b)] When $\eta d T\to\alpha$ and $\sigma^2/d^2T\to \beta^2$ as $d,T\to\infty$, we get
    \begin{align*}
       &\qquad -\int_0^s \Theta( u ,x)  f'(u) \mathrm{d}u+ f(s){\Theta}(s,x)-f(0){\Theta}(0,x) \\
    &=-\alpha\int_0^{s}\int_0^1 f(u)A(x,y)\Theta(u,y)\mathrm{d}y\mathrm{d}u+\alpha^2\beta^2 \int_0^s f(u)\mathrm{d}\xi_1(u,x).
    \end{align*}
    \item [(c)] When $\eta \sigma T^{\frac{1}{2}}\to\alpha$ and $\sigma^2/d^2T\to\infty$ as $d,T\to\infty$, we get
        \begin{align*}
       & -\int_0^s \Theta( u ,x)  f'(u) \mathrm{d}u+ f(s){\Theta}(s,x)-f(0){\Theta}(0,x) =\alpha^2 \int_0^s f(u)\mathrm{d}\xi_1(u,x).
    \end{align*}
\end{itemize}
Our main result in Theorem~\ref{thm:mainmean} follows from the above results; proof is provided in Section \ref{sec:LLN proofs}.

\subsection{For Theorem~\ref{thm:mainfluc}}\label{sec:fluctuation scaling limit}

We next turn to proving the fluctuation results. From the definition of our interpolation process in Section~\ref{sec:Notations}, we immediately have that $U^{d,T}\in C([0,\tau];C([0,1]))$. The idea of deriving the limit of $\{U^{d,T}\}_{d\ge 1,T>0}$ is similar to the idea of deriving the limit of $\{\Bar{\Theta}^{d,T}\}_{d\ge 1,T>0}$. Specifically, we first have the following tightness result.

\begin{thm}[Tightness]\label{thm:tightness of continuous interpolation CLT}
Let Assumptions \ref{ass:tightness in space LLN} and \ref{ass:continuous data} hold, and further suppose the initial conditions in Theorem \ref{thm:mainmean} and Theorem \ref{thm:mainfluc} are satisfied. If $\eta=\frac{\alpha}{dT}$, $\sigma=o(dT^{\frac{1}{2}})$ and there exists a uniform constant $C_{s,2}$ such that $\max(\gamma T^{-\frac{1}{2}},\gamma d^{-1},\gamma \sigma d^{-1}T^{-\frac{1}{2}} )\le C_{s,2}$ for all $d\ge 1,T>0$, then $ \{U^{d,T}(\cdot,\cdot)\}_{d\ge 1, T> 0}$ as defined in \eqref{eq:fluctuation} is tight in ${C}\left([0,\tau]; {C}([0,1])\right)$. Hence, any subsequence of $ \{U^{d,T}(\cdot,\cdot)\}_{d\ge 1, T> 0}$ has a further weakly convergent subsequence with limit in ${C}\left([0,\tau]; {C}([0,1])\right)$.
\end{thm}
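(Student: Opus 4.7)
The plan is to apply a Kolmogorov--Chentsov-type moment criterion to establish tightness of $\{U^{d,T}\}$ in $C([0,\tau];C([0,1]))$, mirroring the structure the authors use in Propositions~\ref{prop:L2 tightness in time LLN} and~\ref{prop:L2 tightness in time and space LLN} for the LLN analysis. Concretely, I aim to show that there is a constant $K$, independent of $d$ and $T$, such that
\begin{align*}
\mathbb{E}\big[|U^{d,T}(s',x)-U^{d,T}(s,x)|^4\big]&\le K|s'-s|^2,\\
\mathbb{E}\big[|U^{d,T}(s,x')-U^{d,T}(s,x)|^4\big]&\le K|x'-x|^2,
\end{align*}
for all $s\le s'\in[0,\tau]$ and $x,x'\in[0,1]$. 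These, combined with the assumed fourth-moment initial-condition bounds on $U^{d,T}(0,i/d)$, would give the desired tightness. As a preliminary reduction, since $U^{d,T}$ is piecewise-linear in both arguments, it suffices to establish the displays above at grid points $(t/T,i/d)$ with $0\le t\le N$ and $i\in[d]$, controlling interpolation separately through the hypothesis on spatial increments of the initial condition.

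The next step is to iterate the one-step recursion \eqref{eq:recurssion for fluctuation} to represent the time increment $U^{d,T}(\tfrac{t}{T},\tfrac{i}{d})-U^{d,T}(\tfrac{s}{T},\tfrac{i}{d})$ as a telescoping sum over $\ell\in[s,t)$ of four contributions: (i) the interaction term $-\eta\sum_j W_i^\ell W_j^\ell U^{d,T}_j$, (ii) the centered interaction fluctuation $-\eta\gamma\sum_j(W_i^\ell W_j^\ell-A(\tfrac{i}{d},\tfrac{j}{d}))\Theta_j$, (iii) the ODE-discretization error $-\gamma\bigl(\Theta(\tfrac{t+1}{T},\tfrac{i}{d})-\Theta(\tfrac{t}{T},\tfrac{i}{d})+\eta\sum_j A(\tfrac{i}{d},\tfrac{j}{d})\Theta_j\bigr)$, and (iv) the noise term $\eta\gamma W_i^\ell\varepsilon^\ell$. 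By the $L^4$ triangle inequality, the fourth moment of the total increment is bounded by the sum of the fourth moments of these four sub-increments, which I would then bound separately under the assumed scaling.

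The scaling conditions are designed to make each contribution $O(|t-s|^2/T^2)$ in fourth moment. Term (i) is structurally identical to the interaction contribution in the LLN tightness argument and yields an estimate of the form $\eta^2 dT\sum_\ell \mathbb{E}[|U^{d,T}_j|^4]$, closed via a discrete Gronwall iteration driven by the bounds being proved. Term (ii) is a martingale increment in $\ell$; by a discrete Burkholder--Davis--Gundy (or Rosenthal) inequality, its fourth moment is controlled by the quadratic-variation-squared which, after expansion, produces eighth-order products of $W$ bounded by the condition on $E$ in Assumption~\ref{ass:continuous data}. Together with $\eta=\alpha/(dT)$ and $|\Theta|$ bounded (from Theorem~\ref{thm:mainmean}), this yields a bound of order $(\gamma T^{-1/2})^4|t-s|^2$, finite under our hypothesis. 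Term (iv), also a martingale increment with per-step fourth moment $(\eta\gamma\sigma)^4\lesssim (\gamma\sigma d^{-1}T^{-1/2})^4 T^{-2}$, contributes $(\gamma\sigma d^{-1}T^{-1/2})^4|t-s|^2$. Term (iii) is deterministic; using Remark~\ref{rem:LLN smoothness} to place $\Theta\in C^1([0,\tau];C^1([0,1]))$, the per-step Euler/Riemann error is $O(T^{-2}+T^{-1}d^{-1})$, so the accumulated $\gamma$-scaled bound is controlled by $(\gamma d^{-1})^4$ and $(\gamma T^{-1/2})^4$, matching the remaining hypotheses.

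The main obstacle I expect is the fourth-moment control of the centered interaction term (ii). Expanding $\bigl|\sum_j(W_i^\ell W_j^\ell-A(\tfrac{i}{d},\tfrac{j}{d}))\Theta_j\bigr|^4$ produces an eighth-order polynomial in $W$ summed over quadruples of spatial indices, and extracting the correct $O(d^4)$ cancellation requires precisely the centering structure together with the uniform bound on $E$. For the spatial increment, the analogous expansion for $U^{d,T}(s,x')-U^{d,T}(s,x)$ produces differences $W_i^\ell W_j^\ell-W_{i'}^\ell W_j^\ell$ whose fourth moments require the fourth-difference Lipschitz conditions on $A$, $B$, and $E$ in Assumption~\ref{ass:continuous data}, giving the $|x'-x|^2$ scaling. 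Once these moment bounds are in place, the Kolmogorov--Chentsov criterion, together with piecewise-linearity and the initial-condition assumptions, yields tightness in $C([0,\tau];C([0,1]))$; Prokhorov's theorem then ensures that every subsequence has a weakly convergent further subsequence with limit in that space.
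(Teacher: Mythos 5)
Your overall architecture matches the paper's: reduce to grid points by piecewise linearity, decompose the increment of $U^{d,T}$ via the recursion \eqref{eq:recurssion for fluctuation} into the averaged/centered interaction, the $\gamma$-scaled discretization error of the ODE, and the noise term, bound each fourth moment under the stated scalings, close with discrete Gronwall, and invoke the Kolmogorov criterion (the paper's Propositions \ref{prop:L2 tightness in time CLT} and \ref{prop:L2 tightness in time and space CLT} do exactly this, with your martingale/Rosenthal step carried out by hand via the ``Type 1 / Type 2'' classification of nonvanishing eighth-order terms in $W$).

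There is, however, one genuine gap: your target spatial modulus is too weak. You propose to prove $\mb{E}\big[|U^{d,T}(s,x')-U^{d,T}(s,x)|^4\big]\le K|x'-x|^{2}$, but for a two-parameter field on $[0,\tau]\times[0,1]$ the Kolmogorov--Chentsov criterion with fourth moments and the bound $K\left(|s'-s|^{2}+|x'-x|^{2}\right)$ is exactly borderline (the combined H\"older exponents satisfy $\tfrac12+\tfrac12=1$, not $<1$), so tightness in $C([0,\tau];C([0,1]))$ does not follow. What is actually needed, and what the paper proves in Proposition \ref{prop:L2 tightness in time and space CLT}, is the \emph{fourth}-power spatial bound $\mb{E}\big[|U^{d,T}(s,x')-U^{d,T}(s,x)|^4\big]\le K|x'-x|^{4}$, which together with $|s'-s|^{2}$ in time gives $\tfrac12+\tfrac14<1$. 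The fourth-difference Lipschitz conditions on $A$, $B$, $E$ in Assumption \ref{ass:continuous data} are engineered precisely to produce $\big(\tfrac{i-j}{d}\big)^4$ rather than $\big(\tfrac{i-j}{d}\big)^2$ from the expansion you describe, so the computation you outline would in fact deliver the stronger exponent; but as stated, your claimed moment bounds do not suffice for the conclusion, and you should correct the spatial exponent before asserting tightness.
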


\begin{thm}[Limit Identification]\label{thm:masterclttheorem}
Invoke the conditions in Theorem \ref{thm:tightness of continuous interpolation CLT}. For any $\tau>0$ and any subsequence $\{U^{d_k,T_k}\}_{k\ge 1}$ of $\{U^{d,T}\}_{d\ge 1,T>0}$, there exists a further subsequence converging weakly to a stochastic function $U\in C([0,\tau];C([0,1]))$ as $d_k,T_k\to\infty$. Furthermore, for any bounded smooth function $f:[0,\tau]\to\mb{R}$, 
 \purple{if there exist some uniform non-negative constants $\zeta,\beta$ such that $\gamma \sigma d^{-1}T^{-\frac{1}{2}}\to \beta$ and $\gamma T^{-\frac{1}{2}}\to \zeta$, $U$ satisfies that
for any $s\in [0,\tau],x\in [0,1]$,
    \begin{align*}
        &\qquad - \int_0^s U(u,x)f'(u)\mathrm{d}u+f(s)U(s,x)-f(0)U(0,x)\\
    &=-\alpha \int_0^s \int_0^1 f(u)A(x,y)U(u,y)\mathrm{d}y\mathrm{d}u+\alpha \beta \int_0^s f(u) \mathrm{d}\xi_2(u,x) \\
    &\quad+\alpha \zeta \int_0^s f(u) \mathrm{d}\xi_3(u,x) 
    + O(\gamma d^{-1}+\gamma T^{-1}) +o(1).
    \end{align*}}
\end{thm}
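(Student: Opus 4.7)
The plan is to follow the strategy used to establish Theorem~\ref{thm:LLN}. Tightness of $\{U^{d,T}\}$ in $C([0,\tau];C([0,1]))$ is already supplied by Theorem~\ref{thm:tightness of continuous interpolation CLT}; hence every subsequence admits a further weakly convergent subsequence with limit $U\in C([0,\tau];C([0,1]))$, and by the Skorokhod representation we may work on a common probability space with almost-sure convergence. The remaining task is to identify this limit as the unique solution of the weak-form equation stated in the theorem, which we will do by applying Abel summation (discrete integration by parts) to the recursion~\eqref{eq:recurssion for fluctuation} after multiplying by a smooth test function $f(t/T)$ and summing over $0\le t\le \lfloor sT\rfloor-1$. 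The telescoped LHS converges to $-\int_0^s U(u,x)f'(u)\mathrm{d}u+f(s)U(s,x)-f(0)U(0,x)$.

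The four lines of the RHS of~\eqref{eq:recurssion for fluctuation} must be analyzed separately. First, the particle-interaction drift $-\eta\sum_t f(t/T)\sum_j W(t/T,i/d)W(t/T,j/d)U^{d,T}(t/T,j/d)$ is split into its conditional expectation and a mean-zero remainder. Using $\eta d T\to \alpha$, continuity of $A$, a Riemann-sum approximation, and the almost-sure convergence $U^{d,T}\to U$, the expectation converges to $-\alpha\int_0^s\int_0^1 f(u)A(x,y)U(u,y)\mathrm{d}y\mathrm{d}u$; the variance of the remainder is controlled by the boundedness of $B$ in Assumption~\ref{ass:continuous data} together with the scaling $\eta d T\to\alpha$, and is of order $O(T^{-1/2})$. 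The interpolation-error line is bounded directly by combining the Euler-discretization and Riemann-sum errors, which total $O(\gamma d^{-1}+\gamma T^{-1})$ once we use that $\Theta$ solves~\eqref{eq:mainmean ODE LLN}. The two remaining lines produce the diffusion terms: the noise contribution $\eta\gamma\sum_t f(t/T)W(t/T,i/d)\varepsilon^t$ is a martingale in $t$ with conditional variance proportional to $A(i/d,i/d)\sigma^2$, so under $\gamma\sigma d^{-1}T^{-1/2}\to\beta$ it converges to $\alpha\beta\int_0^s f(u)\mathrm{d}\xi_2(u,x)$; the particle-interaction fluctuation $-\eta\gamma\sum_t f(t/T)\sum_j(W_iW_j-A_{ij})\Theta(t/T,j/d)$ is a martingale whose conditional covariance is extracted from the fourth-moment kernel $B$, so under $\gamma T^{-1/2}\to\zeta$ it converges to $\alpha\zeta\int_0^s f(u)\mathrm{d}\xi_3(u,x)$. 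When one of the limiting ratios $\gamma T^{-1/2}$ or $\gamma\sigma d^{-1}T^{-1/2}$ fails to converge to a strictly positive constant, the corresponding diffusion term is suppressed, which explains the indicator functions in the claimed weak form and gives the three regimes of Theorem~\ref{thm:mainfluc}.

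The main obstacle is the joint functional CLT for these two martingales, viewed as random fields indexed by $x\in[0,1]$. Finite-dimensional convergence reduces to a multivariate martingale CLT applied to the time-sum; Lindeberg's condition is verified using the eighth-moment smoothness of $E$ imposed in Assumption~\ref{ass:continuous data}, while the quadratic variations are computed from $A$ (for $\xi_2$) and from $A$ together with $B$ (for $\xi_3$). The joint independence of $\xi_2$ and $\xi_3$ is a consequence of the independence of $\{\varepsilon^t\}$ from the data process $\{W(s,\cdot)\}$, which forces the cross-covariances between the two martingales to vanish. Tightness in $x$ for the fluctuation process is already supplied by Theorem~\ref{thm:tightness of continuous interpolation CLT}, so only the finite-dimensional distributional convergence needs to be established at this step. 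The most delicate algebraic task is converting the discrete $(i,j)$-sum that defines the conditional covariance of the interaction fluctuation into the integral against $B$ that defines the covariance of $\xi_3$; this uses precisely the second- and fourth-order finite-difference smoothness bounds on $B$ in Assumption~\ref{ass:continuous data}, which is why those bounds are imposed in addition to the plain boundedness of $B$.
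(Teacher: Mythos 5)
Your proposal follows essentially the same route as the paper: the same test-function/Abel-summation decomposition of the recursion into the drift applied to $U^{d,T}$, the noise martingale, the interaction-fluctuation martingale, and the interpolation-error term, with the same limit identifications, the same covariance computation via $B-AA$ for $\xi_3$, and the same vanishing-cross-covariance argument for the independence of $\xi_2$ and $\xi_3$. The only cosmetic difference is that the paper verifies the Lindeberg--Feller condition for the interaction-fluctuation martingale using only the fourth-moment kernel $B$ (the eighth-moment kernel $E$ enters only through the tightness propositions), whereas you invoke $E$ at that step; this does not affect correctness.
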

\vspace{0.1in}

\noindent The above theorem is proved in Section~\ref{sec:cltproof}. As an immediate consequence, we have that:

\begin{itemize}
    \item [(a)] When $T=o(d^2)$, $\sigma=O(d)$ and $\gamma T^{-\frac{1}{2}}\to\zeta\in (0,\infty) $, we can show that $O(\gamma d^{-1}+\gamma T^{-1})=o(1)$. Therefore, we get 
    \begin{align*}
     &-\alpha^{-1} \big(\int_0^s U(u,x)f'(u)\mathrm{d}u+f(s)U(s,x)-f(0)U(0,x)\big) \nonumber \\
    =&-\int_0^s \int_0^1 f(u)A(x,y)U(u,y)\mathrm{d}y\mathrm{d}u+ \beta \int_0^s f(u) \mathrm{d}\xi_2(u,x)  \\
    &+\zeta \int_0^s f(u) \mathrm{d}\xi_3(u,x) .
    \end{align*}
    \item [(b)] When $\max({d,T^{\frac{1}{2}}})\ll \sigma \ll dT^{\frac{1}{2}}$ and $\gamma \sigma d^{-1}T^{-\frac{1}{2}}\to \beta\in(0,\infty)$, we can show that $\gamma T^{-\frac{1}{2}}\to 0$ and $O(\gamma d^{-1}+\gamma T^{-1})=o(1)$. Therefore, we get
    \begin{align*}
        &\qquad - \alpha^{-1} \big(\int_0^s U(u,x)f'(u)\mathrm{d}u+f(s)U(s,x)-f(0)U(0,x)\big) \nonumber \\
    &=- \int_0^s \int_0^1 f(u)A(x,y)U(u,y)\mathrm{d}y\mathrm{d}u+\beta \int_0^s f(u) \mathrm{d}\xi_2(u,x) .
    \end{align*}
    \item [(c)] When $d=O(T^{\frac{1}{2}})$, $\sigma=O(T^{\frac{1}{2}})$ and $\gamma d^{-1}\to 0$, we can show that $\gamma \sigma d^{-1} T^{-\frac{1}{2}}\to 0$, $\gamma T^{-\frac{1}{2}}\to 0$ and $O(\gamma d^{-1}+\gamma T^{-1})=o(1)$. Therefore, we get
    \begin{align}\label{eq:large discretization error}
        &- \int_0^s U(u,x)f'(u)\mathrm{d}u+f(s)U(s,x)-f(0)U(0,x) \\
        =&-\alpha \int_0^s \int_0^1 f(u)A(x,y)U(u,y)\mathrm{d}y\mathrm{d}u .\nonumber
    \end{align}
\end{itemize}
\textcolor{black}{Our main results in Theorem~\ref{thm:mainfluc} and Remark \ref{rem:interpolation error dominates} follows from the above result, whose proof is provided in Section \ref{sec:FluctuationProofs}}. 

\subsection{Existence and uniqueness of the solutions}
To fully complete the proofs of our main results in Theorems~\ref{thm:mainmean} and~\ref{thm:mainfluc}, we also need to provide the following existence and uniqueness results on the solutions to the corresponding SDEs. The ODE case follows by Picard-Lindel\"{o}f theorem (see, for example,~\cite{arnold1992ordinary}). Furthermore, since \eqref{eq:mainmean SDE LLN}, \eqref{eq:mainmean BM LLN}, \eqref{eq:mainthm fluc SDE CLT relative large variance} and \eqref{eq:mainthm fluc SDE CLT large discretization error} can be considered as certain degenerate forms of \eqref{eq:mainthm fluc SDE CLT}, we specifically focus on the existence and uniqueness of solution to \eqref{eq:mainthm fluc SDE CLT}. Results similar to Proposition~\ref{prop:existence and uniqueness of SDE} below also hold for \eqref{eq:mainmean SDE LLN}, \eqref{eq:mainmean BM LLN}, \eqref{eq:mainthm fluc SDE CLT relative large variance} and \eqref{eq:mainthm fluc SDE CLT large discretization error}.

\begin{thm}\label{prop:existence and uniqueness of SDE} Let Assumption \ref{ass:continuous data} hold.
\begin{enumerate}
    \item If the initial condition of \eqref{eq:mainthm fluc SDE CLT} satisfies $U(0,\cdot)\in C([0,1])$, then there exists a unique solution $\{U(s,x)\}_{s\in [0,\tau],x\in[0,1]}\in C([0,\tau];C([0,1]))$ to the SDE \eqref{eq:mainthm fluc SDE CLT}.
    \item If the initial condition of \eqref{eq:mainthm fluc SDE CLT} satisfies $U(0,\cdot)\in L^2([0,1])$, then there exists a unique solution $\{U(s,x)\}_{s\in [0,\tau],x\in[0,1]}\in C([0,\tau];L^2([0,1]))$ to the SDE \eqref{eq:mainthm fluc SDE CLT}. Furthermore, the solution satisfies the following stability property:
 \begin{align}\label{eq:L2 estimation SDE solution}
\begin{aligned}
    \mb{E}\big[ \sup_{s\in [0,\tau]} \lv U(s,\cdot) \rv_{L^2([0,1])}^2 \big] &\le 4\bigg(\mb{E}\big[ \lv U(0,\cdot) \rv_{L^2([0,1])}^2\big] +C_2\alpha^2 \beta^2 \tau \\
     &+(C_{5}+C_2^2)\alpha^2\zeta^2 \int_0^\tau  \lv \Theta(s,\cdot) \rv_{L^2([0,1])}^2 \mathrm{d}s  \bigg)\exp\left(4C_2^2\alpha^2 \tau^2\right).
\end{aligned}
\end{align}
\end{enumerate}
\end{thm}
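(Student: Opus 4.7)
The SDE \eqref{eq:mainthm fluc SDE CLT} has bounded linear drift and additive Gaussian-field noise, so I would work with its equivalent pathwise integral (``mild'') formulation
\[
U(s,x) = U(0,x) - \alpha \int_0^s \int_0^1 A(x,y) U(u,y) \, \mathrm{d}y\, \mathrm{d}u + \alpha \beta \xi_2(s,x) + \alpha \zeta \xi_3(s,x),
\]
and handle it by a standard Picard iteration (for existence), Gronwall's inequality (for uniqueness), and a direct $L^2$-energy estimate (for the stability bound). Let $\mathcal{A}f(x) \coloneqq \int_0^1 A(x,y) f(y) \, \mathrm{d}y$. By Assumption \ref{ass:continuous data}, $|A|\le C_2$ yields $\lv \mathcal{A}f \rv_{L^2([0,1])} \le C_2 \lv f \rv_{L^2([0,1])}$ via Cauchy--Schwarz, and the Lipschitz bound on $A(\cdot,z)$ yields $\mathcal{A}\colon C([0,1]) \to C([0,1])$ with operator norm at most $C_2$; hence $\mathcal{A}$ is a bounded linear operator on each of the two Banach spaces in question.

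\textbf{Existence and uniqueness.} For each realization, the noise path $s\mapsto \alpha \beta \xi_2(s,\cdot) + \alpha \zeta \xi_3(s,\cdot)$ lies in $C([0,\tau]; C([0,1]))$ (for part 1) or $C([0,\tau]; L^2([0,1]))$ (for part 2); path-continuity in $s$ follows from a Kolmogorov continuity argument applied to the covariances \eqref{eq:gaussian field covariance CLT} and \eqref{eq:gaussian parameters}, much as in the tightness analysis behind Theorem \ref{thm:tightness of continuous interpolation CLT}. Treating the noise term as a fixed inhomogeneity, the affine operator
\[
\Phi(U)(s,x) \coloneqq U(0,x) - \alpha \int_0^s (\mathcal{A}U(u,\cdot))(x) \, \mathrm{d}u + \alpha \beta \xi_2(s,x) + \alpha \zeta \xi_3(s,x)
\]
has Lipschitz constant $\alpha C_2 \tau$, and its $n$-th iterate $\Phi^n$ has constant $(\alpha C_2 \tau)^n/n!$, which eventually drops below one; the Banach fixed point theorem then gives a unique fixed point, furnishing a unique solution. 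Uniqueness is also directly clear: if $U_1, U_2$ are two solutions, then $W \coloneqq U_1 - U_2$ satisfies $W(s,\cdot) = -\alpha \int_0^s \mathcal{A}W(u,\cdot) \, \mathrm{d}u$ pathwise, and Gronwall's inequality applied in either norm forces $W \equiv 0$. Continuity of the limit in $s$ and $x$ is inherited from continuity of the noise and the smoothing effect of the time integral combined with the mapping property of $\mathcal{A}$.

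\textbf{Stability estimate and main obstacle.} For the bound \eqref{eq:L2 estimation SDE solution} in part 2, I would square the mild formulation, use $(a+b+c+d)^2 \le 4(a^2+b^2+c^2+d^2)$, integrate in $x$, and use $\lv \mathcal{A}U(u,\cdot) \rv_{L^2([0,1])}^2 \le C_2^2 \lv U(u,\cdot) \rv_{L^2([0,1])}^2$ together with Cauchy--Schwarz in the time integral to obtain
\[
\lv U(s,\cdot) \rv_{L^2([0,1])}^2 \le 4 \lv U(0,\cdot) \rv_{L^2([0,1])}^2 + 4\alpha^2 C_2^2 s \int_0^s \lv U(u,\cdot) \rv_{L^2([0,1])}^2 \, \mathrm{d}u + 4\alpha^2\beta^2 \lv \xi_2(s,\cdot) \rv_{L^2([0,1])}^2 + 4\alpha^2\zeta^2 \lv \xi_3(s,\cdot) \rv_{L^2([0,1])}^2.
\]
Taking suprema over $[0,\tau]$, then expectations, and plugging in the bounds $\mb{E}[\lv \xi_2(s,\cdot) \rv_{L^2([0,1])}^2] \le C_2 s$ and $\mb{E}[\lv \xi_3(s,\cdot) \rv_{L^2([0,1])}^2] \le (C_5 + C_2^2)\int_0^s \lv \Theta(u,\cdot) \rv_{L^2([0,1])}^2 \, \mathrm{d}u$ (which follow from unpacking the covariances \eqref{eq:gaussian field covariance CLT} and \eqref{eq:gaussian parameters} under Assumption \ref{ass:continuous data}), an application of Gronwall's inequality yields the claimed bound with the exponential factor $\exp(4 C_2^2 \alpha^2 \tau^2)$. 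The main technical hurdle is the extraction of these $L^2$-moment bounds for the noise fields from their covariance structure: the constants $C_2$ and $C_5 + C_2^2$ appear as direct consequences of $|A| \le C_2$ and $|B| \le C_5$, but verifying this requires a careful Fubini-type manipulation of the quadratic variations. The rest of the argument --- existence via Picard iteration, uniqueness via Gronwall, and the reduction to a linear affine fixed-point problem --- is fairly standard once the integral formulation and the operator bounds on $\mathcal{A}$ are in place.
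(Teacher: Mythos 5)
Your proposal is correct and follows essentially the same route as the paper: Picard iteration on the integral formulation for existence, Gronwall for uniqueness, and the same squaring/operator-norm/covariance-unpacking argument with Gronwall for the stability bound \eqref{eq:L2 estimation SDE solution}. The only cosmetic difference is that you package the iteration as a pathwise Banach fixed-point argument treating the noise as a fixed continuous inhomogeneity, whereas the paper runs the iteration under expectations with stopping-time localization and Borel--Cantelli to get almost-sure convergence of the iterates; both are standard and equivalent here.
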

We remark that the above results not only provide theoretical support but also provide the necessary conditions for provably computing the solutions numerically; see, for example,~\cite{lord2014introduction} for details.

\section{Proofs for the scaling limits}\label{sec: Proofs}

\subsection{Tightness of $\{\Bar{\Theta}^{d,T}\}_{d\ge 1,T>0}$}\label{sec:tightness proofs LLN}

We first start with the following two main propositions. 

\begin{prop}\label{prop:L2 tightness in time LLN} Assume that Assumption \ref{ass:tightness in space LLN} and Assumption \ref{ass:continuous data} hold and there is a uniform constant $C_{s,1}>0$ such that \textcolor{black}{for all $d\ge 1,T>0$ along the limiting sequence},  $\max(\eta d T,\eta \sigma T^{\frac{1}{2}})\le C_{s,1}$. Then there exists a positive constant $C$ depending on $\tau$ and $C_{s,1}$ such that for any $d,T>0$ and $i\in [d]$ and any $0\le t_1<t_2\le \lfloor{\tau T}\rfloor$,
\begin{align}\label{eq:L2 tightness in time LLN}
    \mb{E}\big[\left|\dtheta^{t_2}_i-\dtheta^{t_1}_i\right|^4\big]\le C  \left(\frac{t_2-t_1}{T}\right)^2 .
\end{align}
\end{prop}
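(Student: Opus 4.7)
The natural strategy is the discrete martingale/drift decomposition for the telescoped increment. Iterating the recursion \eqref{eq:centralized online SGD} from $t_1$ to $t_2$ gives
\begin{align*}
\Delta\theta^{t_2}_i - \Delta\theta^{t_1}_i = -\eta\sum_{t=t_1}^{t_2-1}\sum_{j=1}^d A(i/d,j/d)\,\Delta\theta^t_j \;+\; \sum_{t=t_1}^{t_2-1} M^t_i,
\end{align*}
where, with respect to $\mathcal{F}_t := \sigma(\Delta\theta^0, (x^s,\varepsilon^s)_{s<t})$, the sequence
\begin{align*}
M^t_i := -\eta\sum_{j=1}^d\bigl(x^t_i x^t_j - A(i/d,j/d)\bigr)\Delta\theta^t_j + \eta x^t_i \varepsilon^t
\end{align*}
is a martingale difference (using $\mathbb{E}[x^t_i x^t_j]=A(i/d,j/d)$ and $\mathbb{E}[\varepsilon^t]=0$). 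By the $C_r$-inequality it suffices to control the fourth moment of the drift and of the martingale separately.

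For the drift, Jensen's inequality on the time sum followed by Cauchy--Schwarz in the spatial coordinate gives
\begin{align*}
\mathbb{E}\bigl[|\text{drift}|^4\bigr] \le (t_2-t_1)^3\eta^4 C_2^4 d^2 \sum_{t=t_1}^{t_2-1}\mathbb{E}\Bigl[\bigl(\textstyle\sum_j(\Delta\theta^t_j)^2\bigr)^2\Bigr] \le C\,(t_2-t_1)^4(\eta d)^4 K,
\end{align*}
where $K := \sup_{d,T,t,j}\mathbb{E}[(\Delta\theta^t_j)^4]<\infty$ is a uniform fourth-moment bound on the iterates. The hypothesis $\eta dT\le C_{s,1}$ turns this into $C\,C_{s,1}^4 K\,((t_2-t_1)/T)^4 \le C_{s,1}^4 K \tau^2\,((t_2-t_1)/T)^2$. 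For the martingale, Burkholder--Davis--Gundy at $p=4$ gives $\mathbb{E}[|\sum_t M^t_i|^4]\le c_4\,\mathbb{E}[(\sum_t (M^t_i)^2)^2]\le c_4(t_2-t_1)^2\max_t\mathbb{E}[(M^t_i)^4]$. Splitting $M^t_i$ into its interaction and noise parts and conditioning on $\mathcal{F}_t$, the interaction part is bounded via Cauchy--Schwarz and $\mathbb{E}[(\sum_j Y_j^2)^2]\le Cd^2$ (this is where the eighth-moment bound $E\le C_8$ from Assumption~\ref{ass:continuous data} enters, through $\mathbb{E}[(x^t_i)^4(x^t_j)^2(x^t_k)^2]\le C_8$), yielding an $\eta^4 d^4 K = O(1/T^4)$ contribution. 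The noise part satisfies $\mathbb{E}[(\eta x^t_i\varepsilon^t)^4]\le \eta^4 C_5 C_1 \sigma^4$, which is $O(1/T^2)$ by $\eta\sigma T^{1/2}\le C_{s,1}$. Thus $\max_t\mathbb{E}[(M^t_i)^4]\le C/T^2$ and the martingale fourth moment is $O(((t_2-t_1)/T)^2)$.

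The main obstacle is the uniform fourth-moment bound $K$, which I would establish as a preliminary lemma by discrete Gronwall. Expanding $(\Delta\theta^{t+1}_i)^4$ via the binomial theorem and taking the conditional expectation given $\mathcal{F}_t$, each cross term containing $\varepsilon^t$ to an odd power vanishes; the remaining terms are controlled by Cauchy--Schwarz and Hölder in terms of $\mathbb{E}[(\Delta\theta^t_j)^4]$ (for $j$'s coupled through $x^t$, use $B\le C_5$ and $E\le C_8$), giving a recursion of the shape $\mathbb{E}[(\Delta\theta^{t+1}_i)^4]\le (1+C/T)\mathbb{E}[(\Delta\theta^t_i)^4]+C'/T\cdot\max_j\mathbb{E}[(\Delta\theta^t_j)^4]+C''/T^2$, where the $1/T$ factors come from the scalings $\eta d\le C_{s,1}/T$ and $\eta^2\sigma^2\le C_{s,1}^2/T$. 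Iterating over $t\le\lfloor\tau T\rfloor$ and using the initial bound $\mathbb{E}[|\Delta\theta^0_i|^4]\le R^4$ yields $K\le (R^4+C_\tau)\exp(C_\tau)$, independent of $d$ and $T$. Combining the drift and martingale fourth-moment bounds through $\mathbb{E}[|a+b|^4]\le 8\mathbb{E}[a^4]+8\mathbb{E}[b^4]$ then gives the desired estimate \eqref{eq:L2 tightness in time LLN} with a constant depending only on $\tau$, $C_{s,1}$, $R$, and the constants from Assumption~\ref{ass:continuous data}.
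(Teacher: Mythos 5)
Your proposal is correct and follows essentially the same route as the paper: the same three-way decomposition of the telescoped increment into drift, centered interaction, and noise terms, the same uniform fourth-moment bound on the iterates established by a discrete Gronwall argument, and the same bookkeeping via $\eta d T\le C_{s,1}$ and $\eta\sigma T^{1/2}\le C_{s,1}$. The only substantive difference is presentational: where you invoke Burkholder--Davis--Gundy plus Cauchy--Schwarz to extract the factor $(t_2-t_1)^2$ from the martingale part, the paper expands the fourth power directly and counts the surviving terms (those with all four time indices equal or pairwise equal), which amounts to a hands-on proof of the same Rosenthal-type estimate.
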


\begin{proof}[Proof of Proposition \ref{prop:L2 tightness in time LLN}]
 For any $0\le t_1 < t_2 \le \lfloor{\tau T}\rfloor\coloneqq N$, we have
\begin{align*}
&~\dtheta^{t_2}_i-\dtheta^{t_1}_i\\
 =&-\eta \sum_{t=t_1}^{t_2-1} \sum_{j=1}^d \mathbb{E}[x^t_ix^t_j] \dtheta^t_j  -\eta \sum_{t=t_1}^{t_2-1} \sum_{j=1}^d \left( x^t_ix^t_j-\mathbb{E}[x^t_ix^t_j]\right) \dtheta^t_j+\eta \sum_{t=t_1}^{t_2-1} x^t_i \varepsilon^t,
\end{align*}
which implies that
\begin{align}\label{eq:L2 difference in time}
\begin{aligned}
    \left| \dtheta^{t_2}_i-\dtheta^{t_1}_i \right|^4
 &\le 27\eta^4 \underbrace{\big(\sum_{t=t_1}^{t_2-1} \sum_{j=1}^d \mathbb{E}[x^t_ix^t_j] \dtheta^t_j\big)^4}_{S_{i,1}}\\
 &\quad+27\eta^4\underbrace{\big( \sum_{t=t_1}^{t_2-1} \sum_{j=1}^d \left( x^t_ix^t_j-\mathbb{E}[x^t_ix^t_j]\right) \dtheta^t_j\big)^4}_{S_{i,2}}+27\eta^4 \underbrace{ \big(\sum_{t=t_1}^{t_2-1} x^t_i \varepsilon^t \big)^4}_{S_{i,3}}.
\end{aligned}
\end{align}  
Summing over the space index $i$ and taking expectation, we get
\begin{align}\label{eq:difference in time and sum in space}
   \frac{1}{d}\sum_{i=1}^d  \mb{E}\big[ |\dtheta^{t_2}_i-\dtheta^{t_1}_i|^4 \big] & \le \frac{27\eta^4}{d} \sum_{i=1}^d \mb{E}\big[ S_{i,1}\big] +\frac{27\eta^4}{d}  \sum_{i=1}^d\mb{E}\big[ S_{i,2}\big]+\frac{27\eta^4}{d}\sum_{i=1}^d \mb{E}\big[S_{i,3}\big].
\end{align}
Next we will estimate the right-hand side of \eqref{eq:difference in time and sum in space} term by term. Define $$m_t^d\coloneqq\frac{1}{d}\sum_{i=1}^d \mb{E}\left[|\dtheta^t_i|^4\right].$$ First we have
\begin{align*}
   \mb{E}\big[S_{i,1}\big]&= \mb{E}\big[ \sum_{r_1,r_2,r_3,r_4=t_1}^{t_2-1}\sum_{j_1,j_2,j_3,j_4=1}^d \prod_{k=1}^4 A(\frac{i}{d},\frac{j_k}{d})\prod_{k=1}^4 \dtheta^{r_k}_{j_k} \big]\\
   &\le C_2^4 \mb{E}\big[ \sum_{r_1,r_2,r_3,r_4}\sum_{j_1,j_2,j_3,j_4} \prod_{k=1}^4 \purple{|}\dtheta^{r_k}_{j_k} \purple{|}\big] \\
   &\le C_2^4 d^4 (t_2-t_1)^3 \sum_{t=t_1}^{t_2-1} m^d_t ,
\end{align*}
where the first inequality follows from Assumption \ref{ass:continuous data} and the last inequality follows from $abcd\le \frac{1}{4}(a^4+b^4+c^4+d^4)$ for any $a,b,c,d\in\mb{R}$. We also have that 
\begin{align*}
    &\quad \mb{E}\big[ S_{i,2}\big]= \mb{E}\big[ \sum_{r_1,r_2,r_3,r_4=t_1}^{t_2-1}\sum_{j_1,j_2,j_3,j_4=1}^d \prod_{k=1}^4 \big(x^{r_k}_{i}x^{r_k}_{j_k}-\mb{E}[x^{r_k}_{i}x^{r_k}_{j_k}]\big)\prod_{k=1}^4 \dtheta^{r_k}_{j_k}\big]. 
\end{align*}
Most terms in the above sum are zeros due to the independence between $x^t$ and $x^{t'}$ when $t\neq t'$. There are two types of terms would be preserved. To proceed, we let $\mc{F}_k^{\purple{d}}$ to be the $\sigma$-algebra generated by $\left\{ \{x^t_i\}_{i\in [d], 0\le t\le k}, \{ \varepsilon^t \}_{0\le t\le k}, \{\dtheta^{\purple{0}}_i\}_{i\in [d]} \right\}$.
\vspace{0.1in}

\textbf{Type 1}: When $r_1=r_2=r_3=r_4$, nonzero terms are in the form of 
\[
\sum_{j_1,j_2,j_3,j_4=1}^d \mb{E}\big[\prod_{k=1}^4 \left(x^{t}_{i}x^{t}_{j_k}-\mb{E}\left[x^{t}_{i}x^{t}_{j_k}\right]\right)\prod_{k=1}^4\dtheta^{t}_{j_k}\big], 
\]
and there are $3(t_2-t_1)$ such terms. The sum of such terms can be upper bounded as
\begin{align*}
  &\qquad 3\sum_{t=t_1}^{t_2-1}\sum_{j_1,j_2,j_3,j_4=1}^d \mb{E}\big[\prod_{k=1}^4 \left(x^{t}_{i}x^{t}_{j_k}-\mb{E}\left[x^{t}_{i}x^{t}_{j_k}\right]\right)\prod_{k=1}^4\dtheta^{t}_{j_k}\big]  \\
  &=3\sum_{t=t_1}^{t_2-1} \sum_{j_1,j_2,j_3,j_4=1}^d \mb{E}\bigg[\mb{E}\big[\prod_{k=1}^4 \left(x^{t}_{i}x^{t}_{j_k}-\mb{E}\left[x^{t}_{i}x^{t}_{j_k}\right]\right)\prod_{k=1}^4\dtheta^{t}_{j_k}\big] \big| \purple{\mc{F}_{t-1}^{d}} \bigg] \\
  &\le 3C_8' \sum_{t=t_1}^{t_2-1}\sum_{j_1,j_2,j_3,j_4=1}^d   \mb{E}\big[\prod_{k=1}^4\purple{|}\dtheta^{t}_{j_k}\purple{|}\big]\le C_8' d^4 \sum_{t=t_1}^{t_2-1}m^d_t,
\end{align*}
where the first inequality follows from Assumption \ref{ass:continuous data} and $C_8'=16C_8+32C_2^2C_{5}+16C_2^4$. The last inequality follows from $abcd\le \frac{1}{4}(a^4+b^4+c^4+d^4)$ for any $a,b,c,d\in\mb{R}$.
\vspace{0.1in}

\textbf{Type 2}: When $r_1,r_2,r_3,r_4$ are pairwise equal, nonzero terms are in the form of
\[
\sum_{j_1,j_2,j_3,j_4=1}^d \mb{E}\big[\prod_{k=1}^2 \big(x^{t}_{i}x^{t}_{j_k}-\mb{E}\big[x^{t}_{i}x^{t}_{j_k}\big]\big)\prod_{k=3}^4 \big(x^{t'}_{i}x^{t'}_{j_k}-\mb{E}\big[x^{t'}_{i}x^{t'}_{j_k}\big]\big)\prod_{k=1}^2 \dtheta^{t}_{j_k}\prod_{k=3}^4 \dtheta^{t'}_{j_k}\big] 
\]
with $t\neq t'$ and there are $3(t_2-t_1)^2-3(t_2-t_1)$ such terms. For the simplicity of notations, we denote $\Delta X_{ij}^t\coloneqq x^{t}_{i}x^{t}_{j}-\mb{E}\big[x^{t}_{i}x^{t}_{j}\big]$ for all $i,j\in [d]$ and $t\le \lfloor \tau T \rfloor$. Then, the sum of such terms can be upper bounded as 
\begin{align*}
    &\qquad3\sum_{t\neq t'}\sum_{j_1,j_2,j_3,j_4=1}^d \mb{E}\big[\prod_{k=1}^2 \Delta X_{ij_k}^t\prod_{k=3}^4 \Delta X_{ij_k}^{t'}\prod_{k=1}^2 \dtheta^{t}_{j_k}\prod_{k=3}^4 \dtheta^{t'}_{j_k}\big] \\
    &=3\sum_{t\neq t'}\sum_{j_1,j_2,j_3,j_4=1}^d\mb{E}\bigg[  \mb{E}\big[\prod_{k=1}^2 \Delta X_{ij_k}^t\prod_{k=3}^4 \Delta X_{ij_k}^{t'}\prod_{k=1}^2 \dtheta^{t}_{j_k}\prod_{k=3}^4 \dtheta^{t'}_{j_k}\big] | \purple{\mc{F}_{t\vee t'-1}^{d}} \bigg]\\
    &\le 3C_{5}'^2\sum_{t\neq t'} \sum_{j_1,j_2,j_3,j_4=1}^d \mb{E}\big[\prod_{k=1}^2 \purple{|}\dtheta^{t}_{j_k}\purple{|}\prod_{k=3}^4 \purple{|}\dtheta^{t'}_{j_k}\purple{|}\big]\le 3C_{5}'^2 d^4 (t_2-t_1)\sum_{t=t_1}^{t_2-1}  m^d_t,
\end{align*}
where the first inequality follows from Assumption \ref{ass:continuous data} and $C_{5}'=C_{5}+C_2^2$. The last inequality follows from $4(abcd)\le (a^4+b^4+c^4+d^4)$ for any $a,b,c,d\in\mb{R}$. Combining the two upper bounds, we have
\begin{align*}
    \mb{E}\big[ S_{i,2}\big]\le 3\big( C_8'+C_{5}'^2 (t_2-t_1) \big)d^4\sum_{t=t_1}^{t_2-1}  m^d_t \le \big( C_8'+C_{5}'^2 \big) (t_2-t_1)d^4\sum_{t=t_1}^{t_2-1}  m^d_t .
\end{align*}
Last, due to Assumption \ref{ass:continuous data}, we estimate $\mb{E}[S_{i,3}]$ as follows:
\begin{align*}
    \mb{E}[S_{i,3}]&= \sum_{t=t_1}^{t_2-1}B(\frac{i}{d},\frac{i}{d},\frac{i}{d},\frac{i}{d}) \mb{E}\big[ (\varepsilon^t)^4 \big]+\sum_{t\neq t'} A(\frac{i}{d},\frac{i}{d})A(\frac{i}{d},\frac{i}{d}) \mb{E}\left[ (\varepsilon^t)^2 \right]\mb{E}\big[ (\varepsilon^{t'})^2 \big] \\
    &\le C_{5}C_1\sigma^4 (t_2-t_1) +C_2^2 \sigma^4(t_2-t_1)^2\le \big( C_{5}C_1+C_2^2 \big)\sigma^4(t_2-t_1)^2 .
\end{align*}
Plug our estimations of $\mb{E}[S_{i,,1}]$, $\mb{E}[S_{i,2}]$, $\mb{E}[S_{i,3}]$ into \eqref{eq:difference in time and sum in space} and we have
\begin{align}\label{eq:upper bound for differenc in time LLN}
\begin{aligned}
     \frac{1}{d}\sum_{i=1}^d  \mb{E}\big[ |\dtheta^{t_2}_i-\dtheta^{t_1}_i|^4 \big] & \le 27\eta^4 d^4 \big[ C_2^4(t_2-t_1)^3+3\big( C_8'+C_{5}'^2 \big)(t_2-t_1) \big] \\
     &\quad \times \sum_{t=t_1}^{t_2-1} m^d_t +27 \big( C_{5}C_1+C_2^2 \big)\eta^4 \sigma^4(t_2-t_1)^2.
\end{aligned}
\end{align}
Pick $t_1=0,\ t_2=t\le N $ in \eqref{eq:upper bound for differenc in time LLN}. We have for any $t\le N$,
\begin{align*}
    m_{t}^d &\le 8m^d_0+216\eta^4 d^4 \big[ C_2^4 t^3+3\big( C_8'+C_{5}'^2 \big)t \big] \sum_{k=0}^{t-1} m^d_k +216 \big( C_{5}C_1+C_2^2 \big)\eta^4 \sigma^4 t^2\\
    &\le 8m^d_0+216 \big(C_2^4+ 3C_8'+3C_{5}'^2 \big)\eta^4 d^4t^3 \sum_{k=0}^{t-1} m^d_k +216 \big( C_{5}C_1+C_2^2 \big)\eta^4 \sigma^4 t^2.
\end{align*}
According to the discrete Gronwall's inequality, we have for any $t\le N$,
\begin{align*}
    m_{t}^d &\le 216 \big( C_{5}C_1+C_2^2 \big)\eta^4 \sigma^4 t^2\\
    &\quad+216^2\big( C_{5}C_1+C_2^2 \big)\eta^8d^4\sigma^4 t^3\sum_{k=0}^{t-1} k^2\exp\big( 216\big(C_2^4+ 3C_8'+3C_{5}'^2 \big)\eta^4 d^4t^3(t-k-1) \big)\\
    &\le 216 \big( C_{5}C_1+C_2^2 \big)\eta^4 \sigma^4 t^2\\
    &\quad+216^2\big( C_{5}C_1+C_2^2 \big)\eta^8d^4\sigma^4 t^6 \exp\big( 216\big(C_2^4+ 3C_8'+3C_{5}'^2 \big)\eta^4 d^4t^4 \big) \\
    &\le 216 \big( C_{5}C_1+C_2^2 \big)C_{s,1}^4 \tau^2\\
    &\quad +216^2\big( C_{5}C_1+C_2^2 \big)C_{s,1}^8 \tau^6 \exp\big( 216\big(C_2^4+ 3C_8'+3C_{5}'^2 \big)C_{s,1}^4 \tau^4\big), 
\end{align*}
where the last inequality follows from $\max(\eta dT,\eta\sigma T^{\frac{1}{2}})\le C_{s,1}$ and $N\le \tau T$. We simplify the upper bound of $m^d_t$ as $m^d_t\le C_{\tau} $ for some positive constant $C_\tau$ independent of $d,T,\sigma$. Apply the upper bound of $m^d_t$ to \eqref{eq:L2 difference in time} along with estimations of $\mb{E}[S_{i,,1}]$, $\mb{E}[S_{i,2}]$, $\mb{E}[S_{i,3}]$ and we get
\begin{align*}
    &\qquad \mb{E}\left[|\dtheta_i^{t_2}-\dtheta_i^{t_1}|^2\right] \\
    &\le 27\eta^4 d^4 \big( C_2^4(t_2-t_1)^3+3(C_8'+C_{5}'^2)(t_2-t_1) \big)\sum_{t=t_1}^{t_2-1} m^d_t+27(C_{5}C_1+C_2^2)\eta^4\sigma^4(t_2-t_1)^2\\
    &\le 27C_{\tau}\big( C_2^4+3C_8'+3C_{5}'^2 \big) \eta^4d^4 (t_2-t_1)^4 +27(C_{5}C_1+C_2^2)\eta^4\sigma^4(t_2-t_1)^2\\
    &\le 27C_{\tau}\big( C_2^4+3C_8'+3C_{5}'^2 \big) C_{s,1}^4 \left(\frac{t_2-t_1}{T}\right)^4 +27(C_{5}C_1+C_2^2)C_{s,1}^4 \left(\frac{t_2-t_1}{T}\right)^2,
\end{align*}
where the last inequality follows from $\max(\eta dT,\eta\sigma T^{\frac{1}{2}})\le C_{s,1}$. Last due to $\big(\frac{t_2-t_1}{T}\big)^2\le \tau^2$ for any $0\le t_1<t_2\le N$, \eqref{eq:L2 tightness in time LLN} is proved. 
\end{proof}

\begin{prop}\label{prop:L2 tightness in time and space LLN} Assume that Assumption \ref{ass:tightness in space LLN} and Assumption \ref{ass:continuous data} hold and there exists a uniform constant $C_{s,1}>0$ such that \textcolor{black}{for all $d\ge 1,T>0$ along the limiting sequence}, $\max(\eta d T,\eta \sigma T^{\frac{1}{2}})\le C_{s,1}$. Then there exists a positive constant $C$ depending on $\tau$ and $C_{s,1}$ such that for any $d,T>0$, $i,j\in [d]$ and $0\le t_1<t_2\le \lfloor{\tau T}\rfloor$,
\begin{align}\label{eq:L2 tightness in time and space LLN}
    \mb{E}\big[\big| (\dtheta^{t_2}_i-\dtheta^{t_2}_j)-(\dtheta^{t_1}_i-\dtheta^{t_1}_j)\big|^4\big]\le C\left(\frac{i-j}{d}\right)^4 \left( \frac{t_2-t_1}{T}\right)^2.
\end{align}
Furthermore, if initial conditions in Theorem \ref{thm:mainmean} hold, then for any $i,j\in [d]$ and any $0\le t\le \lfloor{\tau T}\rfloor$,
\begin{align}\label{eq:L2 tightness in space LLN}
    \mb{E}\big[\big| \dtheta^{t}_i-\dtheta^{t}_j\big|^4\big]\le \left(8L^4+8\tau^2 C \right) \left(\frac{i-j}{d}\right)^4.
\end{align}
where $L$ is the uniform constant in the initial condition in Theorem \ref{thm:mainmean}.
\end{prop}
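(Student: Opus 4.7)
The strategy parallels the proof of Proposition~\ref{prop:L2 tightness in time LLN}: subtract the centralized SGD recursion~\eqref{eq:centralized online SGD} at coordinate $j$ from that at coordinate $i$ to obtain
\[
(\dtheta^{t+1}_i - \dtheta^{t+1}_j) - (\dtheta^t_i - \dtheta^t_j) = -\eta \sum_{k=1}^d (x^t_i - x^t_j)\, x^t_k\, \dtheta^t_k + \eta (x^t_i - x^t_j)\,\varepsilon^t,
\]
sum over $t \in \{t_1,\ldots,t_2-1\}$, split the interaction sum into its mean part and its centered part, apply $|a+b+c|^4 \le 27(|a|^4+|b|^4+|c|^4)$, and take expectations to obtain three sums $\widetilde{S}_1,\widetilde{S}_2,\widetilde{S}_3$ completely analogous to $S_{i,1},S_{i,2},S_{i,3}$ in Proposition~\ref{prop:L2 tightness in time LLN}, but with every occurrence of $x^t_i$ replaced by the difference $x^t_i - x^t_j$.

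The extra work is to extract a factor $(|i-j|/d)^4$ from each of these three terms using the spatial smoothness in Assumption~\ref{ass:continuous data}. Concretely, $|\mb{E}[(x^t_i - x^t_j) x^t_k]| = |A(i/d,k/d) - A(j/d,k/d)| \le C_{3}|i-j|/d$ handles $\widetilde{S}_1$ directly (yielding $C_3^4(|i-j|/d)^4$ after the fourth power). For the noise term $\widetilde{S}_3$, the second $A$-inequality gives $\mb{E}[(x^t_i-x^t_j)^2] \le C_4^2 (i-j)^2/d^2$ and the fourth-order differencing of $B$ gives $\mb{E}[(x^t_i-x^t_j)^4] \le C_7^4 (i-j)^4/d^4$, so expanding $\big(\sum_t (x^t_i-x^t_j)\varepsilon^t\big)^4$ and using $\mb{E}[\varepsilon^t]=0$, independence across $t$, and the moment bounds on $\varepsilon^t$ in Assumption~\ref{ass:tightness in space LLN}(b) produces the expected bound. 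For the centered interaction $\widetilde{S}_2$, the same time-independence argument leaves only Type~1 (all four time indices equal) and Type~2 (pairwise equal) contributions. The Type~1 piece requires expanding $\prod_{\ell=1}^4\big((x^t_i-x^t_j)x^t_{k_\ell} - \mb{E}[\cdot]\big)$ into sixteen moments of the form $\mb{E}[(x^t_i-x^t_j)^m \prod_{l\in S} x^t_{k_l}]\prod_{l \notin S}\mb{E}[(x^t_i-x^t_j)x^t_{k_l}]$ with $|S| = m \in \{0,1,2,3,4\}$; the bounds $|\mb{E}[(x^t_i-x^t_j)x^t_{k_l}]| \le C_3(|i-j|/d)$, $|\mb{E}[(x^t_i-x^t_j)^2 x^t_{k_1}x^t_{k_2}]| \le C_6^2(|i-j|/d)^2$ (with Cauchy--Schwarz extending the second $B$-inequality from the diagonal case), $|\mb{E}[(x^t_i-x^t_j)^4 x^t_{k_1}\cdots x^t_{k_4}]| \le C_9^4(|i-j|/d)^4$ (the $E$-condition), and a Cauchy--Schwarz interpolation for the $m=3$ term together produce $(|i-j|/d)^4$ for every such moment. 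The Type~2 piece is a product of two conditional-variance factors $\mb{E}[Y^t_{k_1}Y^t_{k_2}]$, each bounded by $(C_6^2 + C_3^2)(|i-j|/d)^2$, again yielding $(|i-j|/d)^4$.

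Combining these estimates with $abcd \le \tfrac{1}{4}(a^4+b^4+c^4+d^4)$ on the sums over $\dtheta$, and with the a priori bound $m^d_t \le C_\tau$ already established inside the proof of Proposition~\ref{prop:L2 tightness in time LLN}, one arrives at
\[
\mb{E}\big[|(\dtheta^{t_2}_i - \dtheta^{t_2}_j)-(\dtheta^{t_1}_i - \dtheta^{t_1}_j)|^4\big] \le C'\Big(\tfrac{|i-j|}{d}\Big)^4 \big[\eta^4 d^4 \Delta^4 + \eta^4 d^4 \Delta^2 + \eta^4 \sigma^4 \Delta^2\big],
\]
where $\Delta \coloneqq t_2 - t_1$. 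The constraints $\eta dT \le C_{s,1}$ and $\eta\sigma T^{1/2} \le C_{s,1}$, together with $\Delta/T \le \tau$ and $T \ge 1$, bound each bracketed term by a constant multiple of $(\Delta/T)^2$, proving~\eqref{eq:L2 tightness in time and space LLN}. For the purely spatial bound~\eqref{eq:L2 tightness in space LLN}, telescope $\dtheta^0_i - \dtheta^0_j = \sum_{k=j+1}^{i}(\dtheta^0_k - \dtheta^0_{k-1})$; the power-mean inequality and the initial condition $\mb{E}[|\dtheta^0_k - \dtheta^0_{k-1}|^4] \le L^4/d^4$ yield $\mb{E}[(\dtheta^0_i - \dtheta^0_j)^4] \le L^4(i-j)^4/d^4$. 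Writing $\dtheta^t_i - \dtheta^t_j = (\dtheta^0_i - \dtheta^0_j) + \big[(\dtheta^t_i - \dtheta^t_j) - (\dtheta^0_i - \dtheta^0_j)\big]$, applying $|a+b|^4 \le 8(|a|^4 + |b|^4)$, and invoking~\eqref{eq:L2 tightness in time and space LLN} with $(t/T)^2 \le \tau^2$ finishes the proof. The main obstacle is the Type~1 analysis of $\widetilde{S}_2$: pinning down the sharp $(|i-j|/d)^4$ factor requires orchestrating all four smoothness conditions in Assumption~\ref{ass:continuous data} and interpolating the odd $m=3$ case via Cauchy--Schwarz between the $m=2$ and $m=4$ bounds.
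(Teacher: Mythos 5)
Your proposal is correct and follows essentially the same route as the paper: the same three-term decomposition of the telescoped difference, the same Type~1/Type~2 split of the centered interaction term using independence across time, the same use of the $A$, $B$, and $E$ differencing conditions to extract the $(|i-j|/d)^4$ factor, the a priori bound $m^d_t \le C_\tau$ from Proposition~\ref{prop:L2 tightness in time LLN}, and the same telescoping of the initial condition for~\eqref{eq:L2 tightness in space LLN}. If anything, you are more explicit than the paper about the Type~1 moment expansion (the paper compresses the sixteen-term expansion and the odd-power Cauchy--Schwarz interpolation into a single cited constant $C_9'$), but this is additional detail on the same argument rather than a different approach.
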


\begin{proof}[Proof of Proposition \ref{prop:L2 tightness in time and space LLN}]
According to \eqref{eq:centralized online SGD}, for any $i,j\in [d]$ and any $0\le t\le \lfloor{\tau T}\rfloor-1\coloneqq N-1$, we have
\begin{align*}
    \dtheta^{t+1}_i-\dtheta^{t+1}_j&=(\dtheta^t_i-\dtheta^t_j)-\eta \sum_{l=1}^d \left(\mb{E}\left[x^t_ix^t_l\right]-\mb{E}\left[x^t_jx^t_l\right]\right)\dtheta^t_l \\
    &\quad -\eta \sum_{l=1}^d \left( x^t_ix^t_l-\mb{E}\left[x^t_ix^t_l\right]-x^t_jx^t_l+\mb{E}\left[x^t_j x^t_l\right] \right)\dtheta^t_l +\eta (x^t_i-x^t_j)\varepsilon^t.
\end{align*}
Summing over the time index and taking expectation of the absolute value of both sides, we get for any $0\le t_1<t_2\le N$,

\begin{align}\label{eq:L2 difference in space LLN}
\begin{aligned}
    &~\mb{E}\big[\big| (\dtheta^{t_2}_i-\dtheta^{t_2}_j)-(\dtheta^{t_1}_i-\dtheta^{t_1}_j)\big|^4\big]\\
\le&~27\eta^4 \mb{E}\big[\underbrace{\big( \sum_{t=t_1}^{t_2-1}  \sum_{l=1}^d \left(\mb{E}\left[x^t_ix^t_l\right]-\mb{E}\left[ x^t_jx^t_l\right]\right)\dtheta^t_l \big)^4}_{\mb{E}[S_{i,j,1}]}\big]  \\
 &~+ 27\eta^4 \mb{E}\big[\underbrace{\big( \sum_{t=t_1}^{t_2-1}  \sum_{l=1}^d \left( x^t_ix^t_l-\mb{E}\left[x^t_ix^t_l\right]-x^t_jx^t_l+\mb{E}\left[x^t_j x^t_l\right] \right)\dtheta^t_l \big)^4}_{\mb{E}[S_{i,j,2}]} \big]  \\
    &~+27\eta^4 \mb{E}\big[\underbrace{\big( \sum_{t=t_1}^{t_2-1} (x^t_i-x^t_j)\varepsilon^t \big)^4}_{\mb{E}[S_{i,j,3}]}\big].
\end{aligned}
\end{align}  

Next, we will estimate the right-hand side of \eqref{eq:L2 difference in space LLN} term by term. First we have 
\begin{align*}
     \mb{E}\big[ S_{i,j,1}\big]&=\mb{E}\big[ \sum_{r_1,r_2,r_3,r_4=t_1}^{t_2-1}\sum_{l_1,l_2,l_3,l_4=1}^d \prod_{k=1}^4 \big(A(\frac{i}{d},\frac{j_k}{d})-A(\frac{j}{d},\frac{j_k}{d})\big) \prod_{k=1}^4 \dtheta^t_l \big] \\
     &\le C_{3}^4 \big( \frac{i-j}{d} \big)^4 (t_2-t_1)^3 d^4\sum_{t=t_1}^{t_2-1} m^d_t,
\end{align*}
where the inequality follows from Assumption \ref{ass:continuous data} and $4(abcd)\le a^4+b^4+c^4+d^4$ for all $a,b,c,d\in\mb{R}$. Similar to $\mb{E}[S_{i,2}]$ in the proof of Proposition \ref{prop:L2 tightness in time LLN}, in the estimation of $\mb{E}[S_{i,j,2}]$, there are two types of nonzero terms.
\vspace{0.1in}

\textbf{Type 1}: The first type of nonzero terms are in the form of
\begin{align*}
    \sum_{l_1,l_2,l_3,l_4=1}^d \mb{E}\big[ \prod_{k=1}^4 \left( x^t_ix^t_{l_k}-\mb{E}[x^t_ix^t_{l_k}]-x^t_jx^t_{l_k}+\mb{E}[x^t_jx^t_{l_k}] \right)  \big|\prod_{k=1}^4 \dtheta^t_{l_k} \big| \big],
\end{align*}
and there are $3(t_2-t_1)$ such terms. The sum of such terms can be bounded by 
\begin{align*}
    &\qquad3\sum_{t=t_1}^{t_2-1}\sum_{l_1,l_2,l_3,l_4=1}^d \mb{E}\big[ \prod_{k=1}^4 \left( x^t_ix^t_{l_k}-\mb{E}[x^t_ix^t_{l_k}]-x^t_jx^t_{l_k}+\mb{E}[x^t_jx^t_{l_k}] \right)  \big|\prod_{k=1}^4 \dtheta^t_{l_k} \big| \big] \\
    &= 3\sum_{t=t_1}^{t_2-1}\sum_{l_1,l_2,l_3,l_4=1}^d \mb{E}\bigg[\mb{E}\big[ \prod_{k=1}^4 \left( x^t_ix^t_{l_k}-\mb{E}[x^t_ix^t_{l_k}]-x^t_jx^t_{l_k}+\mb{E}[x^t_jx^t_{l_k}] \right)  \big|\prod_{k=1}^4 \dtheta^t_{l_k} \big | \big] \big|\purple{\mc{F}_{t-1}^{d}} \bigg] \\
    &\le 3C_9'^4 \big( \frac{i-j}{d} \big)^4 \sum_{t=t_1}^{t_2-1}\sum_{l_1,l_2,l_3,l_4=1}^d \mb{E}\big[ \big|\prod_{k=1}^4 \dtheta^t_{l_k} \big| \big] \le  C_9'^4 \big( \frac{i-j}{d} \big)^4 d^4 \sum_{t=t_1}^{t_2-1} m^d_t, 
\end{align*}
where the first inequality follows from Assumption \ref{ass:continuous data} and $C_9'^4=C_9^4+2C_{3}^2C_{6}^2+C_{3}^4$. The last inequality follows from  $4(abcd)\le a^4+b^4+c^4+d^4$ for all $a,b,c,d\in\mb{R}$.

\textbf{Type 2}: For the simplicity of notations, we denote $\Delta X_{ij}^t\coloneqq x^{t}_{i}x^{t}_{j}-\mb{E}\big[x^{t}_{i}x^{t}_{j}\big]$ for all $i,j\in [d]$ and $t\le \lfloor \tau T \rfloor$. Then, the second type nonzero terms are in the form of
\begin{align*}
    &\sum_{l_1,l_2,l_3,l_4=1}^d  \mb{E}\Bigg[ \underbrace{\prod_{k=1}^2 \left( \Delta X_{il_k}^t-\Delta X_{jl_k}^t\right)  \prod_{k=3}^4 \left( \Delta X_{il_k}^{t'}-\Delta X_{jl_k}^{t'} \right)}_{P^{t,t'}_{i,j,l_1,l_2,l_3,l_4}} \times \big|\prod_{k=1}^2 \dtheta^t_{l_k} \prod_{k=3}^4  \dtheta^{t'}_{l_k}  \big| \Bigg],
\end{align*}
with $t\neq t'$ and there are $3(t_2-t_1)^2-3(t_2-t_1)$ such terms. The sum of such terms can be upper bounded by
\begin{align*}
    &\qquad \sum_{t\neq t'}\sum_{l_1,l_2,l_3,l_4=1}^d \mb{E}\bigg[ \mb{E}\big[ P^{t,t'}_{i,j,l_1,l_2,l_3,l_4} \prod_{k=1}^2 \big|\dtheta^t_{l_k}\big| \prod_{k=3}^4  \big|\dtheta^{t'}_{l_k}  \big|  \big] \big| \purple{\mc{F}_{t\vee t'-1}^{d}} \bigg] \\
    &\le C_{6}'^4 \big( \frac{i-j}{d} \big)^4 \sum_{t\neq t'}\sum_{l_1,l_2,l_3,l_4=1}^d \mb{E}\big[ \prod_{k=1}^2 \big|\dtheta^t_{l_k}\big| \prod_{k=3}^4  \big|\dtheta^{t'}_{l_k}  \big|  \big] \le C_{6}'^4 \big( \frac{i-j}{d} \big)^4 d^4 (t_2-t_1)\sum_{t=t_1}^{t_2-1} m^d_t,
\end{align*}
 where the first inequality follows from Assumption \ref{ass:continuous data} and $C_{6}'^4=C_{3}^2+C_{6}^2$. The last inequality follows from  $abcd\le \frac{a^4+b^4+c^4+d^4}{4}$ for all $a,b,c,d\in\mb{R}$. Therefore, $\mb{E}[S_{i,j,2}]$ can be upper bounded as
 \begin{align*}
     \mb{E}[S_{i,j,2}]&\le  \big(C_9'^4 +C_{6}'^4 (t_2-t_1)\big) \big( \frac{i-j}{d} \big)^4 d^4 \sum_{t=t_1}^{t_2-1} m^d_t\\
     &\le \big(C_9'^4 +C_{6}'^4 \big) (t_2-t_1) \big( \frac{i-j}{d} \big)^4 d^4 \sum_{t=t_1}^{t_2-1} m^d_t.
 \end{align*}
 Due to Assumption \ref{ass:continuous data}, we can bound $\mb{E}[S_{i,j,3}]$ as
 \begin{align*}
    &\quad \mb{E}[S_{i,j,3}] \\
    &=\sum_{t=t_1}^{t_2-1}\mb{E}\big[(x^t_i-x^t_j)^4\big]\mb{E}\big[ \left(\varepsilon^t\right)^4 \big]+\sum_{t\neq t'} \mb{E}\big[ (x^t_i-x^t_j)^2\big]\mb{E}\big[ (x^{t'}_i-x^{t'}_j)^2\big] \mb{E}\big[ \left(\varepsilon^t\right)^2 \big]\mb{E}\big[ \big(\varepsilon^{t'}\big)^2 \big]\\
     &\le C_1 C_{7}^4 (t_2-t_1) \big( \frac{i-j}{d} \big)^4 \sigma^4 +C_{4}^4 (t_2-t_1)^2 \big( \frac{i-j}{d} \big)^4 \sigma^4\\
     &\le \big( C_1 C_{7}^4+C_{4}^4 \big) (t_2-t_1)^2 \big( \frac{i-j}{d} \big)^4 \sigma^4 .
 \end{align*}
 We have shown $m^d_t\le C_{\tau}$ for any $0\le t\le N$ in the proof of Proposition \ref{prop:L2 tightness in time LLN}. With \eqref{eq:L2 difference in space LLN} and the estimations on $\mb{E}[S_{i,j,1}]$, $\mb{E}[S_{i,j,2}]$, $\mb{E}[S_{i,j,3}]$, we have
 \begin{align*}
     &\quad \mb{E}\big[\big| (\dtheta^{t_2}_i-\dtheta^{t_2}_j)-(\dtheta^{t_1}_i-\dtheta^{t_1}_j)\big|^4\big]\\
     &\le 27(C_9'^4+C_{6}'^4+C_{3}^4 )(t_2-t_1)^3 \big( \frac{i-j}{d} \big)^4 \eta^4 d^4 \sum_{t=t_1}^{t_2-1} m^d_t\\
     &\quad+27\big( C_1 C_{7}^4+C_{4}^4 \big) (t_2-t_1)^2 \big( \frac{i-j}{d} \big)^4 \eta^4\sigma^4 \\
     &\le 27C_{\tau}(C_9'^4+C_{6}'^4+C_{3}^4 )C_{s,1}^4\big(\frac{t_2-t_1}{T}\big)^4\big( \frac{i-j}{d} \big)^4 +27C_{s,1}^4 \big(\frac{t_2-t_1}{T}\big)^2\big( \frac{i-j}{d} \big)^4,
 \end{align*}
 where the last inequality follows from $\max(\eta dT,\eta \sigma T^{\frac{1}{2}})\le C_{s,1}$. Therefore, \eqref{eq:L2 tightness in time and space LLN} is proved. Last \eqref{eq:L2 tightness in space LLN} follows from \eqref{eq:L2 tightness in time and space LLN} and the initial conditions in Theorem \ref{thm:mainmean}.
\end{proof}

We are now ready to prove Theorem \ref{thm:tightness of continuous interpolation LLN} based on the above two propositions. 
\begin{proof}[Proof of Theorem \ref{thm:tightness of continuous interpolation LLN}] Tightness can be proved by the Kolmogorov tightness criteria \cite[Chapter 4]{karatzas2012brownian}. The last statement simply follows from tightness property. To apply the Kolmogorov tightness criteria, we need to verify the following two conditions:
\begin{enumerate}
    \item [(a)] $\{\Bar{\Theta}^{d,T}(0,0)\}_{d\ge 1, T> 0}$ is tight in the probability space.
    \item [(b)] There exists a positive constant $C_{\textbf{tight}}$ such that for any $s_1,s_2\in [0,\tau]$ and $x_1,x_2\in [0,1]$, we have $$\sup_{d,T} \mb{E}\big[ \big| \Bar{\Theta}^{d,T}(s_1,x_1)-\Bar{\Theta}^{d,T}(s_2,x_2)  \big|^4 \big]\le  C_{\textbf{tight}} \big( \left|s_1-s_2\right|^2+\left|x_1-x_2\right|^4 \big).$$ 
\end{enumerate}
To verify $(a)$, it is easy to see that for any $d\ge 1,T>0$ and $N>0$, 
\begin{align*}
    \mb{P}\big( \big| \Bar{\Theta}^{d,T}(0,0) \big|>N \big)\le N^{-2} \mb{E}\big[\big|\dtheta^0_{0}\big|^2\big]\le N^{-2}R^2\to 0  \qquad \text{as }N\to \infty.
\end{align*}
To verify $(b)$, first we notice that $\Bar{\Theta}^{d,T}(\cdot,\cdot)$ is piecewise linear in both variables due to \eqref{eq:approximation to Theta LLN}. Without loss of generality, we assume that $0\le s_1<s_2\le \tau $, $0\le x_1<x_2\le 1$ and $\lfloor{Ts_1}\rfloor=t_1< \lfloor{Ts_2}\rfloor=t_2$, $\lfloor{dx_1}\rfloor=i<\lfloor{dx_2}\rfloor=j$. We have
\begin{align*}
    {\mb{E}\big[ \big| \Bar{\Theta}^{d,T}(s_1,x_1)-\Bar{\Theta}^{d,T}(s_2,x_2)\big|^4 \big]}&\le  {8\mb{E}\big[ \big| \Bar{\Theta}^{d,T}(s_1,x_1)-\Bar{\Theta}^{d,T}(s_2,x_1)\big|^4 \big]}\\
    &\qquad\qquad\qquad+ {8\mb{E}\big[ \big| \Bar{\Theta}^{d,T}(s_2,x_1)-\Bar{\Theta}^{d,T}(s_2,x_2)\big|^4 \big]}.
\end{align*}
 According to Proposition \ref{prop:L2 tightness in time LLN}, we have
\begin{align*}
   &\quad\mb{E}\big[ \big| \Bar{\Theta}^{d,T}(s_1,x_1)-\Bar{\Theta}^{d,T}(s_2,x_1)\big|^4 \big]\\
   &\le 27C  \left( \big(\frac{t_1+1}{T}-s_1\big)^2+\big(\frac{t_2-t_1-1}{T}\big)^2+\big(s_2-\frac{t_2}{T}\big)^2  \right) \\
   &\le 27C \left(s_2-s_1\right)^2.
\end{align*}
and according to Proposition \ref{prop:L2 tightness in time and space LLN}, we have
\begin{align*}
   &\quad\mb{E}\big[ \big| \Bar{\Theta}^{d,T}(s_2,x_1)-\Bar{\Theta}^{d,T}(s_2,x_2)\big|^4 \big]\\
   &\le 27(8M^4+8\tau^2 C) \left( \big(\frac{i+1}{d}-x_1\big)^4+\big(\frac{j-i-1}{d}\big)^4+\big(x_2-\frac{j}{d}\big)^4  \right)\\
   &\le 216(M^4+\tau^2 C) \left(x_2-x_1\right)^4.  
\end{align*}
Therefore $(b)$ holds with $C_{\textbf{tight}}=216C+1728(M^4+\tau^2 C)$.
\end{proof}

\subsection{Limit identification}\label{sec:LLN proofs}

\begin{proof}[Proof of Theorem \ref{thm:LLN}.] According to Theorem \ref{thm:tightness of continuous interpolation LLN}, any subsequence 
\begin{align*}
\{\Bar{\Theta}^{d_k,T_k}(\cdot,\cdot)\}_{k\ge 1}~~~~ \text{of}~~~~ \{\Bar{\Theta}^{d,T}(\cdot,\cdot)\}_{d\ge 1, T> 0}
\end{align*}
has a further weakly convergent subsequence with limit $\Theta \in {C}([0,\tau]; {C}([0,1]))$ as $d_k,T_k\to\infty$. For the simplicity of notations, we denote the convergent subsequence of $ \{\Bar{\Theta}^{d_k,T_k}\}_{k\ge 1}$ by $ \{\Bar{\Theta}^{d,T}\}_{d\ge 1, T> 0}$ in the proof.

To identify the limit, first we rewrite \eqref{eq:centralized online SGD} in terms of $\Bar{\Theta}^{d,T}$. For any $0\le t\le \lfloor{\tau T}\rfloor-1$ and any $i\in [d]$:
\begin{align}\label{eq:online SGD embedding form}
\begin{aligned}
    &~\Bar{\Theta}^{d,T}(\frac{t+1}{T},\frac{i}{d})-\Bar{\Theta}^{d,T}(\frac{t}{T},\frac{i}{d})\\
    =&~-\eta \sum_{j=1}^d W(\frac{t}{T},\frac{i}{d})W(\frac{t}{T},\frac{j}{d})\Bar{\Theta}^{d,T}(\frac{t}{T},\frac{j}{d}) +\eta W(\frac{t}{T},\frac{i}{d})\varepsilon^t.
\end{aligned}
\end{align}
Therefore for any bounded smooth function $f:[0,\tau]\to\mb{R}$, we have for any $s\in [0,\tau]$, $i\in [d-1]$ and $x \in [\frac{i}{d},\frac{i+1}{d}] $,
{\small
\begin{align}\label{eq:test function applied sum}
    &\quad \sum_{t=0}^{\lfloor sT \rfloor-1} f(\frac{t}{T})\big(\Bar{\Theta}^{d,T}(\frac{t+1}{T},x)-\Bar{\Theta}^{d,T}(\frac{t}{T},x) \big) \nonumber \\
    &= -\eta (i+1-dx)\sum_{t=0}^{\lfloor sT \rfloor-1}\sum_{j=1}^d f(\frac{t}{T}) W(\frac{t}{T},\frac{i}{d})W(\frac{t}{T},\frac{j}{d})\Bar{\Theta}^{d,T}(\frac{t}{T},\frac{j}{d})\\
    &\quad+\eta(i+1-dx) \sum_{t=0}^{\lfloor sT \rfloor-1} f(\frac{t}{T})W(\frac{t}{T},\frac{i}{d})\varepsilon^t \nonumber \\
    &\quad  -\eta (dx-i)\sum_{t=0}^{\lfloor sT \rfloor-1}\sum_{j=1}^d f(\frac{t}{T}) W(\frac{t}{T},\frac{i+1}{d})W(\frac{t}{T},\frac{j}{d})\Bar{\Theta}^{d,T}(\frac{t}{T},\frac{j}{d})\nonumber \\
    &\quad+\eta(dx-i) \sum_{t=0}^{\lfloor sT \rfloor-1} f(\frac{t}{T})W(\frac{t}{T},\frac{i+1}{d})\varepsilon^t \nonumber.
\end{align}
}
We can rewrite the left-hand side of \eqref{eq:test function applied sum} as
\begin{align*}
     &\quad\text{LHS}\eqref{eq:test function applied sum}\\
     &= \sum_{t=1}^{\lfloor sT \rfloor-1} \Bar{\Theta}^{d,T}(\frac{t}{T},x)\big( f(\frac{t-1}{T})-f(\frac{t}{T}) \big)+f(\frac{\lfloor sT \rfloor-1}{T})\Bar{\Theta}^{d,T}(\frac{\lfloor sT \rfloor}{T},x)-f(0)\Bar{\Theta}^{d,T}(0,x).
\end{align*}
When $d,T\to \infty$, since $f$ is bounded and smooth, for any $s\in (0,\tau)$, we have
\[
f(s\pm T^{-1})=f(s)+O(T^{-1}),\qquad  T\left( f(s)-f(s-T^{-1})\right)=f'(s)+O(T^{-1}).
\]
Since $\{\Bar{\Theta}^{d,T}\}_{d\ge 1,T>0}$ converges weakly to $\Theta$ and $f,f'$ are continuously bounded, we have
\begin{align*}
    \frac{1}{T}\sum_{t=1}^{\lfloor sT \rfloor-1} \Bar{\Theta}^{d,T}(\frac{t}{T},x)\left( f'(\frac{t}{T})+O(T^{-1})  \right)=\int_0^{\frac{\lfloor sT \rfloor}{T}} \Theta( u ,x)  f'(u) \mathrm{d}u +o(1).
\end{align*}
Therefore 
\begin{align}\label{eq:LHS of test function applied sum}
      \text{LHS}\eqref{eq:test function applied sum}=-\int_0^{\frac{\lfloor sT \rfloor}{T}} \Theta( u ,x)  f'(u) \mathrm{d}u+ f(s){\Theta}(\frac{\lfloor sT\rfloor}{T},x)-f(0){\Theta}(0,x) +o(1).
\end{align}
Next, we look at the right-hand side of \eqref{eq:test function applied sum}. For any $i\in [d]$, $\text{RHS}\eqref{eq:test function applied sum}=-(i+1-dx) I_i^{\purple{s}}-(dx-i)I_{i+1}^s$,
\begin{align*}
    &\quad I_i^s\\
    =&\underbrace{- \frac{\eta dT}{d T}\sum_{t=0}^{\lfloor sT \rfloor-1}\sum_{j=1}^d f(\frac{t}{T})A(\frac{i}{d},\frac{j}{d})\Bar{\Theta}^{d,T}(\frac{t}{T},\frac{j}{d})}_{N_{i,1}^s}+ \underbrace{ \frac{\eta dT}{d T} \sum_{t=0}^{\lfloor sT \rfloor-1} f(\frac{t}{T})W(\frac{t}{T},\frac{i}{d})\varepsilon^t}_{N_{i,2}^s} \\
    & \underbrace{- \frac{\eta dT}{d T}\sum_{t=0}^{\lfloor sT \rfloor-1}\sum_{j=1}^d f(\frac{t}{T})\big( W(\frac{t}{T},\frac{i}{d})W(\frac{t}{T},\frac{j}{d})-\mb{E}\big[W(\frac{t}{T},\frac{i}{d})W(\frac{t}{T},\frac{j}{d})\big] \big)\Bar{\Theta}^{d,T}(\frac{t}{T},\frac{j}{d})}_{N_{i,3}^s} .
\end{align*}
\purple{Based on the same space-time interpolation introduced in Section~\ref{sec:Introduction}, for each $l=1,2,3$, $\{N_{i,l}^s\}$ can be interpolated as a function in $C([0,\tau];C([0,1]))$, denoted as $N^{d,T}_l$. According to Lemma \ref{lem:joint tightness LLN}, $(\Bar{\Theta}^{d,T}, N_1^{d,T}, N_2^{d,T}, N_3^{d,T})$ is tight in $C([0,\tau];C([0,1]))^{\otimes 4}$. Therefore, the sequential weak limit exists in $C([0,\tau];C([0,1]))^{\otimes 4}$.
 The sequential weak limit of $(\Bar{\Theta}^{d,T}, N_1^{d,T}, N_2^{d,T}, N_3^{d,T})$ in $C([0,\tau];C([0,1]))^{\otimes 4}$ is derived based on the weak limit of each term. Note that for $\Bar{\Theta}^{d,T}$, the weak limit, denoted as $\Theta$, satisfies an equation based on the limits of other terms, which are identified as follows:
\begin{itemize}
    \item  For $N_1^{d,T}$, weak limit is identified based on the continuous mapping theorem~\cite[Theorem 2.7]{billingsley2013convergence};
    \item  For $N_2^{d,T}$, to identify weak limit of an infinite dimensional process, it suffices to identify the weak limits for each finite dimensional weak limit. We identify the finite dimensional limit via pointwise central limit theorem.
    \item For $N_3^{d,T}$, we use the fact that convergence in probability to zero implies convergence in distribution to zero. 
\end{itemize}
}

We start with term $N_1^{d,T}$. Due to the facts that $f, A$ are continuously bounded, and $\{\Bar{\Theta}^{d,T}\}_{d\ge 1,T>0}$ converges weakly to $\Theta$ (see Theorem~\ref{thm:tightness of continuous interpolation LLN}), we have
\begin{align*}
    N_{i,1}^s&= -\eta d T\big( \frac{1}{d T}\sum_{t=0}^{\lfloor sT \rfloor-1}\sum_{j=1}^d f(\frac{t}{T})A(\frac{i}{d},\frac{j}{d})\Theta(\frac{t}{T},\frac{j}{d})+o(1) \big)\\
    &= -\eta d T \big( \int_0^{\frac{\lfloor sT\rfloor}{T}}\int_0^1 f(u)A(\frac{i}{d},y)\Theta(u,y)\mathrm{d}y\mathrm{d}u+o(1)\big).
\end{align*}
For $N_{i,2}^s= \frac{\eta dT}{d T} \sum_{t=0}^{\lfloor sT \rfloor-1} f(\frac{t}{T})W(\frac{t}{T},\frac{i}{d})\varepsilon^t$, note that $\mb{E}[N_{i,2}^s]=0$. Since $\{W(\frac{t}{T},\frac{i}{d})\varepsilon^t\}_{t=1}^T$ is a sequence of i.i.d. random variables, the limit of $N_{i,2}^s$ can be studied via standard Central Limit Theorems. In particular, we have
\begin{align}\label{eq:covariance Ni2}
       \mb{E}[N_{i,2}^{s_1}N_{j,2}^{s_2}]= \sigma^2 \eta^2 T A(\frac{i}{d},\frac{j}{d}) \big( \int_0^{s_1\wedge s_2} f(u)^2 \mathrm{d}u+o(1) \big).
    \end{align}
\purple{Therefore, there exists a centered Gaussian field process $\{\xi_1(s,x)\}_{s\in [0,\tau],x\in [0,1]}$ such that for any $s_1,s_2\in [0,\tau]$ and $x,y\in [0,1]$,
\begin{align*}
       \mb{E}[\xi_1(s_1,x)\xi_1(s_2,y)]=\sigma_1(s_1,s_2,x,y)\quad\text{with}\quad \sigma_1(s_1,s_2,x,y)= (s_1\wedge s_2) A(x,y), 
    \end{align*}
   and $N_{i,2}^s=\sigma^2\eta^2 T \big( \int_0^s f(u)\mathrm{d}\xi_1(u,\frac{i}{d}) +o(1)\big)$ in distribution for all $i\in [d]$. By checking the covariance of the process $\{(N_{1,3}^s,N_{2,3}^s,\cdots,N_{d,3}^s)\}_{s\ge 0}$ via equation \eqref{eq:covariance Ni2}, we have 
 $\{(N_{i,2}^s)_{i\in [d]}\}_{s\ge 0}= \{(\sigma^2\eta^2 T  \int_0^s f(u)\mathrm{d}\xi_1(u,\frac{i}{d})+o(1) )_{i\in [d]}\}_{s\ge 0}$. }
 For $N_{i,3}^s$, we have 
\[
N_{i,3}^s= -(\eta d T) \sum_{j=1}^d \sum_{t=0}^{\lfloor sT \rfloor-1}Z_{t,j},
\]
where $Z_{t,j}\coloneqq\frac{1}{dT}f(\frac{t}{T})\big( W(\frac{t}{T},\frac{i}{d})W(\frac{t}{T},\frac{j}{d})-\mb{E}\big[W(\frac{t}{T},\frac{i}{d})W(\frac{t}{T},\frac{j}{d})\big] \big)\Bar{\Theta}^{d,T}(\frac{t}{T},\frac{j}{d})$. Notice that
\begin{align*}
    &\mb{E}\left[Z_{t,j}\right]\\
    =&\frac{1}{T}f(\frac{t}{T})\mb{E}\bigg[\mb{E}\big[\big( W(\frac{t}{T},\frac{i}{d})W(\frac{t}{T},\frac{j}{d})-\mb{E}\big[W(\frac{t}{T},\frac{i}{d})W(\frac{t}{T},\frac{j}{d})\big] \big)| \purple{\mc{F}_{t-1}^{d}} \big]\Bar{\Theta}^{d,T}(\frac{t}{T},\frac{j}{d}) \bigg]=0.
\end{align*}
Furthermore, we can check that for any $t_1\neq t_2$ and any $j,l\in [d]$, 
\begin{align*}
    &\mb{E}\left[Z_{t_1,j}Z_{t_2,l}\right]\\
    =&\frac{1}{d^2 T^2}f(\frac{t_1}{T})f(\frac{t_2}{T})\big(B(\frac{i}{d},\frac{j}{d},\frac{i}{d},\frac{l}{d})-A(\frac{i}{d},\frac{j}{d})A(\frac{i}{d},\frac{l}{d})\big)\mb{E}\big[\Bar{\Theta}^{d,T}(\frac{t_1}{T},\frac{j}{d})\Bar{\Theta}^{d,T}(\frac{t_2}{T},\frac{l}{d})  \big] .
\end{align*}
Therefore, we have $\mb{E}\left[N_{i,3}^s\right]=0$ and 
\begin{align*}
    &\text{Var}( N_{i,3})\\
    =&\eta ^2 \sum_{j,l=1}^d \sum_{t=0}^{\lfloor sT \rfloor-1} f(\frac{t}{T})^2\big(B(\frac{i}{d},\frac{j}{d},\frac{i}{d},\frac{l}{d})-A(\frac{i}{d},\frac{j}{d})A(\frac{i}{d},\frac{l}{d})\big) \mb{E}\big[\Bar{\Theta}^{d,T}(\frac{t}{T},\frac{j}{d})\Bar{\Theta}^{d,T}(\frac{t}{T},\frac{l}{d})\big] \\
    \le &  \left(C_2^2+C_{5}\right) \lv f \rv_\infty^2 s \eta^2 d T \sum_{j=1}^d \mb{E}\big[ |\Bar{\Theta}^{d,T}(\frac{t}{T},\frac{j}{d})|^2 \big]\\
    \le &  C_{s,1}^2 \left(C_2^2+C_{5}\right) \lv f \rv_\infty^2 \frac{s}{T}\big( \frac{1}{d}\sum_{j=1}^d \mb{E}\big[ |\Delta \theta^0_j|^2 \big]+\frac{1}{d}\sum_{j=1}^d \mb{E}\big[ |\Delta \theta^t_j-\dtheta^0_j|^2 \big] \big)\\
    &\to 0, \qquad \text{as }d,T\to\infty ,
\end{align*}
where the first inequality follows from Assumption \ref{ass:continuous data}. The last limit follows from Proposition \ref{prop:L2 tightness in time LLN} and initial conditions in Theorem \ref{thm:mainmean}. Therefore, we have shown that $N_{i,3}^s\to 0$ in probability, and we write it as $N_{i,3}^s=o(1)$ in the following calculation. Combine our approximations on $N_{i,1}^s,N_{i,2}^s,N_{i,3}^s$ for any $i\in [d]$ and $s\in [0,\tau]$, we can write the right-hand side of \eqref{eq:test function applied sum} as 
\begin{align}\label{right side of test function applied sum}
    &\quad\text{RHS}\eqref{eq:test function applied sum} \nonumber\\
    &=-(i+1-dx) (N_{i,1}^s+N_{i,2}^s+N_{i,3}^s)-(dx-i)(N_{i+1,1}^s+N_{i+1,2}^s+N_{i+1,3}^s) \nonumber \\
    &= -\eta d T \int_0^{\frac{\lfloor sT \rfloor}{T}}\int_0^1 f(u)((i+1-dx)A(\frac{i}{d},y)+(dx-i)A(\frac{i+1}{d},y))\Theta(u,y)\mathrm{d}y\mathrm{d}u   \nonumber \\
    &\quad +\sigma^2 \eta^2 T \big( (i+1-dx)\int_0^s f(u)\mathrm{d}\xi_1(u,\frac{i}{d})+(dx-i)\int_0^s f(u)\mathrm{d}\xi_1(u,\frac{i+1}{d}) \big)\nonumber \\
    &\quad+o(\eta d T )+ o(\sigma^2 \eta^2 T) \nonumber
    \\
    &=-\eta d T \int_0^{\frac{\lfloor sT \rfloor}{T}}\int_0^1 f(u)A(x,y)\Theta(u,y)\mathrm{d}y\mathrm{d}u  +\sigma^2 \eta^2 T  \int_0^s f(u)\mathrm{d}\xi_1(u,x)\\
    &\quad+o(\eta d T )+ o(\sigma^2 \eta^2 T)\nonumber,
\end{align}
\purple{where the last identity follows from the facts that $A\in C^2([0,1])$ and for any $x\in [\frac{i}{d},\frac{i+1}{d}) $, $y\in [\frac{j}{d},\frac{j+1}{d})$, 
\begin{align*}
&\quad (i+1-dx)(j+1-dy)A(\frac{i}{d},\frac{j}{d})+(i+1-dx)(dy-j)A(\frac{i}{d},\frac{j+1}{d})\\
&+(dx-i)(j+1-dy)A(\frac{i+1}{d},\frac{j}{d})+(dx-i)(dy-j)A(\frac{i+1}{d},\frac{j+1}{d})\\
&=A(x,y)+o(1).
\end{align*}} Finally, with \eqref{eq:LHS of test function applied sum} and\eqref{right side of test function applied sum}, we have for any $s\in [0,\tau]$,

\begin{align*}
   & \quad -\int_0^{\frac{\lfloor sT \rfloor}{T}} \Theta( u ,x)  f'(u) \mathrm{d}u+ f(s){\Theta}(\frac{\lfloor sT\rfloor}{T},x)-f(0){\Theta}(0,x) +o(1)\\
   &=-\eta d T \int_0^{\frac{\lfloor sT \rfloor}{T}}\int_0^1 f(u)A(x,y)\Theta(u,y)\mathrm{d}y\mathrm{d}u  +\sigma^2 \eta^2 T  \int_0^s f(u)\mathrm{d}\xi_1(u,x)\\
   &\quad+o(\eta d T )+ o(\sigma^2 \eta^2 T).
\end{align*}

Hence, letting $d,T\to\infty$ we obtain that for any bounded smooth test function $f$ and for any $s\in [0,\tau]$, $x\in [0,1]$,
\begin{align*}
    &-\int_0^s \Theta( u ,x)  f'(u) \mathrm{d}u+ f(s){\Theta}(s,x)-f(0){\Theta}(0,x) \\
    =&-\eta d T\big( \int_0^{s}\int_0^1 f(u)A(x,y)\Theta(u,y)\mathrm{d}y\mathrm{d}u+o(1)\big)\\
    &~~+\sigma^2 \eta^2 T \big( \int_0^s f(u)\mathrm{d}\xi_1(u,x)+o(1)\big).
\end{align*}

\end{proof}

\begin{proof}[Proof of Theorem \ref{thm:mainmean}.] The proof follows from the results established in Theorem \ref{thm:LLN}.

The moderate-noise setup and the high-noise setup simply follow from integration by parts according to the consequences 2 and 3 of Theorem \ref{thm:LLN} respectively, as we discussed in Section \ref{sec:KTC}. The uniqueness and existence of solution in $C([0,\tau];C([0,1]))$ follows from part (a) of Theorem \ref{prop:existence and uniqueness of SDE}.

For the low-noise setup, since $f$ is smooth, we know that $$
\int_0^{(\cdot)} \Theta( u ,x)  f'(u) \mathrm{d}u\in C^1([0,\tau])\quad\text{and}\quad \int_0^{(\cdot)} f(u)A(x,y)\Theta(u,y)\mathrm{d}u\mathrm{d}y\in C^1([0,\tau]).$$ 
Therefore according to \eqref{eq:LLN weak form}, $f(\cdot)\Theta(\cdot,x)\in C^1([0,\tau])$ for any $x\in[0,1]$ which implies that $\Theta(\cdot,x)\in C^1([0,\tau])$ for any $x\in [0,1]$. We can then apply integration by parts to the left side of \eqref{eq:LLN weak form}. Therefore, $\Theta$ satisfies \eqref{eq:mainmean ODE LLN}.

Since $A$ satisfies Assumption \ref{ass:continuous data}, for any $\Theta_1,\Theta_2\in C([0,\tau];C([0,1]))$, we have
\begin{align*}
     &\quad\alpha\sup_{x\in [0,1]} \bigg|  \int_0^1 A(x,y) \left( \Theta_1(s,y)-\Theta_2(s,y) \right)\mathrm{d}y \bigg| \\
     &\le \alpha \sup_{x,y\in [0,1]} \left|A(x,y)\right| \sup_{y\in [0,1]} \left| \Theta_1(s,y)-\Theta_2(s,y)  \right|\\
    &\le \alpha C_2  \sup_{y\in [0,1]} \left| \Theta_1(s,y)-\Theta_2(s,y)  \right|.
\end{align*}
Therefore the right-hand side of \eqref{eq:mainmean ODE LLN} is Lipschitz in $\Theta(s,\cdot)$ for any $s\in [0,\tau]$.  According to the Picard-Lindel\"{o}f theorem (see, for example,~\cite{arnold1992ordinary}), there exists a unique solution in $C([0,\tau];C([0,1]))$ to \eqref{eq:mainmean ODE LLN} with any initial condition $\Theta(0,\cdot)=\Theta_0(\cdot)\in C([0,1])$. 

With the uniqueness of solution to \eqref{eq:mainmean ODE LLN} and Theorem \ref{thm:LLN}, we know that every subsequence of $\{\Theta^{d,T}\}_{d\ge 1,T>0}$ has a further subsequence converging weakly to a unique $\Theta\in C([0,\tau];C([0,1]))$. Therefore, $\{\Theta^{d,T}\}_{d\ge 1,T>0}$ converges weakly to $\Theta$ as $d,T\to\infty$.
\end{proof}

\begin{lemma}\label{lem:joint tightness LLN} Let $N^{d,T}_l \in C([0,\tau];C([0,1]))$ be the interpolated function defined in the proof of Theorem \ref{thm:LLN} for $l=1,2,3$. Under the conditions in Theorem \ref{thm:LLN}, the sequence $\{(\Bar{\Theta}^{d,T}, N^{d,T}_1, N^{d,T}_2, N^{d,T}_3)\}_{d\ge 1, T>0}$ is tight in the space $C([0,\tau];C([0,1]))^{\otimes 4}$.      
\end{lemma}
\begin{proof}[Proof of Lemma \ref{lem:joint tightness LLN}] Due to the product space structure, it suffices to show tightness for each one of $\Bar{\Theta}^{d,T}, N^{d,T}_1, N^{d,T}_2, N^{d,T}_3$ in $C([0,\tau];C([0,1]))$. The tightness of $\{\Bar{\Theta}^{d,T}\}$ is proved in Theorem \ref{thm:tightness of continuous interpolation LLN}. In this proof, we show tightness for $\{N^{d,T}_l\}$ for $l=1,2,3$.

\purple{\textbf{Tightness of $N_1^{d,T}$.} First, we have $N^{d,T}_1(0,0)=-\eta f(0) \sum_{j=1}^d A(0,\frac{j}{d})\Bar{\Theta}^{d,T}(0,\frac{j}{d})$. Under assumptions in Theorem \ref{thm:LLN}, we have \begin{align*}
    \mb{P}(|N^{d,T}_1(0,0)| >N )&\le N^{-2} \eta^2 f(0)^2 \big( \sum_{j=1}^d A(0,\frac{j}{d})^2 \big) \big( \sum_{j=1}^d \mb{E}[ |\Delta \theta^{0}_j|^2 ] \big) \\
    &\le C_2^2\lv f \rv_{\infty}^2  N^{-2}\eta^2 d^2 R^2 .
\end{align*}
Since $\eta d T\le C_{s,1}$ for a uniform constant $C_{s,1}$, $\mb{P}(|N^{d,T}_1(0,0)| >N ) \lesssim N^{-2}\to 0$ as $N\to\infty$. Therefore, $\{N^{d,T}_1(0,0)\}_{d\ge 1, T>0}$ is tight in the probability space. Next, we observe that for any $0\le t_1<t_2\le \lfloor sT \rfloor-1$ and $i_1,i_2\in [d]$,
\begin{align*}
    N^{d,T}_1(\frac{t_2}{T},\frac{i_2}{d})-N^{d,T}_1(\frac{t_1}{T},\frac{i_1}{d}) &= \eta \sum_{t=t_1}^{t_2-1}\sum_{j=1}^d f(\frac{t}{T})A(\frac{i_2}{d},\frac{j}{d})\Bar{\Theta}^{d,T}(\frac{t}{T},\frac{j}{d})\\
    &\ +\eta \sum_{t=0}^{t_1-1}\sum_{j=1}^d f(\frac{t}{T})\big(A(\frac{i_2}{d},\frac{j}{d})-A(\frac{i_1}{d},\frac{j}{d})\big)\Bar{\Theta}^{d,T}(\frac{t}{T},\frac{j}{d})
\end{align*}
Therefore, we have
\begin{align*}
    &\quad\mb{E}[| N^{d,T}_1(\frac{t_2}{T},\frac{i_2}{d})-N^{d,T}_1(\frac{t_1}{T},\frac{i_1}{d}) |^4]\\
    &\lesssim \eta^4\mb{E}\big[ \sum_{r_1,r_2,r_3,r_4=t_1}^{t_2-1}\sum_{j_1,j_2,j_3,j_4=1}^d \prod_{k=1}^4 |A(\frac{i_2}{d},\frac{j_k}{d})|\prod_{k=1}^4 |f(\frac{r_k}{T})\dtheta^{r_k}_{j_k}| \big]\\
    &\ +\eta^4\mb{E}\big[ \sum_{r_1,r_2,r_3,r_4=0}^{t_1-1}\sum_{j_1,j_2,j_3,j_4=1}^d \prod_{k=1}^4 |A(\frac{i_2}{d},\frac{j_k}{d})-A(\frac{i_1}{d},\frac{j_k}{d})|\prod_{k=1}^4 |f(\frac{r_k}{T})\dtheta^{r_k}_{j_k}| \big]\\
   &\le C_2^4 \lv f \rv_{\infty}^4 \eta^4 \mb{E}\big[ \sum_{r_1,r_2,r_3,r_4=t_1}^{t_2-1}\sum_{j_1,j_2,j_3,j_4=1}^d \prod_{k=1}^4 \purple{|}\dtheta^{r_k}_{j_k} \purple{|}\big] \\
   &\ +C_2^4 \lv f \rv_{\infty}^4 \eta^4 \big(\frac{i_2-i_1}{d}\big)^4\mb{E}\big[ \sum_{r_1,r_2,r_3,r_4=0}^{t_1-1}\sum_{j_1,j_2,j_3,j_4=1}^d \prod_{k=1}^4 \purple{|}\dtheta^{r_k}_{j_k} \purple{|}\big] \\
   &\le C_2^4\lv f \rv_{\infty}^4 \eta^4 d^4 (t_2-t_1)^3 \sum_{t=t_1}^{t_2-1} m^d_t \\
   &\ +C_2^4C_3^4\lv f \rv_{\infty}^4 \eta^4 d^4 t_1^3 \big(\frac{i_2-i_1}{d}\big)^4 \sum_{t=0}^{t_1-1} m^d_t\\
   &\lesssim (\eta d T)^4 \big(\frac{t_2-t_1}{T}\big)^4 + (\eta dT)^4 \big(\frac{i_2-i_1}{d}\big)^4,
\end{align*}
where the third inequality follows from Assumption \ref{sec:assumptions LLN} and the last inequality follows from the upper bound of $m^d_t$ derived in the proof of Proposition \ref{prop:L2 tightness in time LLN}. Since $N^{d,T}_1$ is piecewise linear in both variables and $\eta d T\le C_{s,1}$ for all $d\ge 1, T>0$, for all $0\le s_1<s_2\le \tau$ and $0\le x_1<x_2\le 1$, we have
\begin{align*}
    {\mb{E}\big[ \big| N_1^{d,T}(s_1,x_1)-N_1^{d,T}(s_2,x_2)\big|^4 \big]}
    &\lesssim  (s_2-s_1)^4 + (x_1-x_2)^4.
\end{align*}
The tightness of $N^{d,T}_1$ in $C([0,\tau];C([0,1]))$ follows from the Kolmogorov's tightness criterion. }

\purple{\textbf{Tightness of $N_2^{d,T}$.} First, we have $N^{d,T}_2(0,0)=-\eta f(0) W(0,0)\varepsilon^0$. Under assumptions in Theorem \ref{thm:LLN}, we have \begin{align*}
    \mb{P}(|N^{d,T}_2(0,0)| >N )\le N^{-2} \eta^2 f(0)^2 \sigma^2 A(0,0)  \le C_2 \lv f \rv_{\infty}^2 N^{-2}\eta^2 \sigma^2  .
\end{align*}
Since $\sigma \eta T^{\frac{1}{2}}\le C_{s,1}$ for a uniform constant $C_{s,1}$, $\mb{P}(|N^{d,T}_2(0,0)| >N ) \lesssim N^{-2}\to 0$ as $N\to\infty$. Therefore, $\{N^{d,T}_2(0,0)\}_{d\ge 1, T>0}$ is tight in the probability space. Next, we observe that for any $0\le t_1<t_2\le \lfloor sT \rfloor-1$ and $i_1,i_2\in [d]$,
\begin{align*}
    N^{d,T}_2(\frac{t_2}{T},\frac{i_2}{d})-N^{d,T}_2(\frac{t_1}{T},\frac{i_1}{d}) &= \eta \sum_{t=t_1}^{t_2-1} f(\frac{t}{T})W(\frac{t}{T},\frac{i_2}{d})\varepsilon^t\\
    &\ +\eta \sum_{t=0}^{t_1-1}f(\frac{t}{T})\big(W(\frac{t}{T},\frac{i_2}{d})-W(\frac{t}{T}, \frac{i_1}{d})\big)\varepsilon^t
\end{align*}
It follows from the arguments of bounding $\mb{E}[S_{i,3}]$ and $\mb{E}[S_{i,j,3}]$ in the proofs of Proposition \ref{prop:L2 tightness in time LLN} and Proposition \ref{prop:L2 tightness in time and space LLN} that
\begin{align*}
    \mb{E}[| N^{d,T}_2(\frac{t_2}{T},\frac{i_2}{d})-N^{d,T}_2(\frac{t_1}{T},\frac{i_1}{d}) |^4]&\lesssim C_2^2\eta^4\lv f \rv_{\infty}^4 \sigma^4 (t_2-t_1)^2  +C_5\eta^4\lv f \rv_{\infty}^4 \sigma^4 (t_2-t_1) \\
    &\ +C_7^4\eta^4\lv f \rv_\infty^4 \sigma^4 t_1^4 \big(\frac{i_2-i_1}{d}\big)^4  \\
   &\lesssim (\sigma\eta T^{\frac{1}{2}})^4 \big(\frac{t_2-t_1}{T}\big)^2 + (\sigma\eta T^{\frac{1}{2}})^4 \big(\frac{i_2-i_1}{d}\big)^4.
\end{align*}
Since $N^{d,T}_2$ is piecewise linear in both variables and $\sigma\eta T^{\frac{1}{2}}\le C_{s,1}$ for all $d\ge 1, T>0$, for all $0\le s_1<s_2\le \tau$ and $0\le x_1<x_2\le 1$, we have
\begin{align*}
    {\mb{E}\big[ \big| N_2^{d,T}(s_1,x_1)-N_2^{d,T}(s_2,x_2)\big|^4 \big]}
    &\lesssim  (s_2-s_1)^2 + (x_1-x_2)^4.
\end{align*}
The tightness of $N^{d,T}_2$ in $C([0,\tau];C([0,1]))$ follows from the Kolmogorov's tightness criterion.}

\purple{\textbf{Tightness of $N_3^{d,T}$.} First, we have $N^{d,T}_3(0,0)=-\eta f(0) \sum_{j=1}^d \big(W(0,0)W(0,\frac{j}{d})-A(0,\frac{j}{d})\big) \Bar{\Theta}^{d,T}(0,\frac{j}{d})$. Under assumptions in Theorem \ref{thm:LLN}, we have \begin{align*}
    \mb{P}(|N^{d,T}_3(0,0)| >N )&\le N^{-2} \eta^2 f(0)^2 \sum_{j=1}^d \big( B(0,\frac{j}{d},0,\frac{j}{d})-A(0,\frac{j}{d})^2 \big) \sum_{j=1}^d \mb{E}[|\Delta\theta^{0}_j|^2] \\
    &\le (C_2^2+C_5) \lv f \rv_{\infty}^2 N^{-2}\eta^2 d^2 R^2  .
\end{align*}
Since $\eta d T\le C_{s,1}$ for a uniform constant $C_{s,1}$, $\mb{P}(|N^{d,T}_3(0,0)| >N ) \lesssim N^{-2}\to 0$ as $N\to\infty$. Therefore, $\{N^{d,T}_3(0,0)\}_{d\ge 1, T>0}$ is tight in the probability space. Next, we observe that for any $0\le t_1<t_2\le \lfloor sT \rfloor-1$ and $i_1,i_2\in [d]$,
\begin{align*}
    &\qquad N^{d,T}_3(\frac{t_2}{T},\frac{i_2}{d})-N^{d,T}_3(\frac{t_1}{T},\frac{i_1}{d}) \\
    &= \eta \sum_{t=t_1}^{t_2-1}\sum_{j=1}^d f(\frac{t}{T})\big(W(\frac{t}{T},\frac{i_2}{d})W(\frac{t}{T},\frac{j}{d})-A(\frac{i_2}{d},\frac{j}{d})\big)\Theta^{d,T}(\frac{t}{T},\frac{j}{d})\\
    &\ +\eta \sum_{t=0}^{t_1-1}\sum_{j=1}^df(\frac{t}{T})\bigg(W(\frac{t}{T},\frac{i_2}{d})W(\frac{t}{T},\frac{j}{d})-W(\frac{t}{T}, \frac{i_1}{d})W(\frac{t}{T},\frac{j}{d})\\
    &\qquad\qquad\qquad\qquad\qquad-A(\frac{i_2}{d},\frac{j}{d})+A(\frac{i_1}{d},\frac{j}{d})\bigg)\Theta^{d,T}(\frac{t}{T},\frac{j}{d}).
\end{align*}
It follows from the arguments of bounding $\mb{E}[S_{i,2}]$ and $\mb{E}[S_{i,j,2}]$ in the proofs of Proposition \ref{prop:L2 tightness in time LLN} and Proposition \ref{prop:L2 tightness in time and space LLN} that
\begin{align*}
    \mb{E}[| N^{d,T}_3(\frac{t_2}{T},\frac{i_2}{d})-N^{d,T}_3(\frac{t_1}{T},\frac{i_1}{d}) |^4]&\lesssim \eta^4\lv f \rv_{\infty}^4 d^4 (t_2-t_1)^2  +\eta^4\lv f \rv_{\infty}^4 d^4 t_1^2 \big(\frac{i_2-i_1}{d}\big)^4 \\
   &\lesssim (\eta d T)^4 \big(\frac{t_2-t_1}{T}\big)^2 + (\eta d T)^4 \big(\frac{i_2-i_1}{d}\big)^4.
\end{align*}
Since $N^{d,T}_3$ is piecewise linear in both variables and $\eta d T\le C_{s,1}$ for all $d\ge 1, T>0$, for all $0\le s_1<s_2\le \tau$ and $0\le x_1<x_2\le 1$, we have
\begin{align*}
    {\mb{E}\big[ \big| N_3^{d,T}(s_1,x_1)-N_3^{d,T}(s_2,x_2)\big|^4 \big]}
    &\lesssim  (s_2-s_1)^2 + (x_1-x_2)^4.
\end{align*}
The tightness of $N^{d,T}_3$ in $C([0,\tau];C([0,1]))$ follows from the Kolmogorov's tightness criterion.
}    
\end{proof}

\section{Proofs for the fluctuations}\label{sec:FluctuationProofs}

\purple{Before we introduce the proofs for Theorem \ref{thm:masterclttheorem}, we first look at the iteration formula for the fluctuations and establish the tightness of the fluctuations. According to the definition of $\Theta^{d,T}$, under the low-noise setup in Theorem \ref{thm:mainmean}, for any $0\le t\le \lfloor \tau T \rfloor-1\coloneqq N-1$, $i\in [d]$, we have}

\purple{\begin{align}\label{eq:recurssion for fluctuation}
\begin{aligned}
    &~~U^{d,T}(\frac{t+1}{T},\frac{i}{d})-U^{d,T}(\frac{t}{T},\frac{i}{d}) \\
    =&- \eta \sum_{j=1}^d W(\frac{t}{T},\frac{i}{d})W(\frac{t}{T},\frac{j}{d})U^{d,T}(\frac{t}{T},\frac{j}{d}) \\
     &-\eta \gamma \sum_{j=1}^d \big(W(\frac{t}{T},\frac{i}{d})W(\frac{t}{T},\frac{j}{d})-\mb{E}\big[W(\frac{t}{T},\frac{i}{d})W(\frac{t}{T},\frac{j}{d})\big]\big)\Theta(\frac{t}{T},\frac{j}{d}) \\
     &-\gamma \big(\Theta(\frac{t+1}{T},\frac{i}{d})- \Theta(\frac{t}{T},\frac{i}{d})+\eta \sum_{j=1}^d A(\frac{i}{d},\frac{j}{d})\Theta(\frac{t}{T},\frac{j}{d}) \big) \\
     &+ \eta \gamma W(\frac{t}{T},\frac{i}{d})\varepsilon^t.
\end{aligned}
\end{align}}
\subsection{Tightness of $\{U^{d,T}\}_{d\ge 1,T>0}$}\label{sec:Tightness proofs CLT}

\begin{prop}\label{prop:L2 tightness in time CLT} \purple{Let Assumptions \ref{ass:tightness in space LLN} and \ref{ass:continuous data} hold and further suppose the initial conditions in Theorem \ref{thm:mainmean} and Theorem \ref{thm:mainfluc} are satisfied.} If there exists a uniform positive constant $C_{s,2}$ such that \textcolor{black}{for all $d\ge 1,T>0$ along the limiting sequence}, $\max(\gamma T^{-\frac{1}{2}}, \gamma d^{-1}, \gamma \sigma d^{-1}T^{-\frac{1}{2}})\le C_{s,2}$, then there exists a uniform constant $C>0$ such that for any $d\ge 1,T>0$, any $i\in [d]$ and any $0\le t_1<t_2\le \lfloor{\tau T}\rfloor$,
\begin{align}\label{eq:L2 tightness in time CLT}
    \mb{E}\big[\big|U^{d,T}(\frac{t_2}{T},\frac{i}{d})-U^{d,T}(\frac{t_1}{T},\frac{i}{d})\big|^4\big]\le C  \left(\frac{t_2-t_1}{T}\right)^2  .
\end{align}
\end{prop}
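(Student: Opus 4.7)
The strategy is to mirror the proof of Proposition \ref{prop:L2 tightness in time LLN}, but working with the recursion \eqref{eq:recurssion for fluctuation} for $U^{d,T}$ in place of \eqref{eq:centralized online SGD} for $\dtheta$. Summing \eqref{eq:recurssion for fluctuation} from $t = t_1$ to $t_2-1$ decomposes $U^{d,T}(t_2/T,i/d) - U^{d,T}(t_1/T,i/d)$ into four contributions: (A) the random-interaction term $-\eta \sum_{t,j} W(t/T,i/d) W(t/T,j/d)\, U^{d,T}(t/T,j/d)$; (B) the centered quadratic term $-\eta\gamma \sum_{t,j}(W(t/T,i/d) W(t/T,j/d) - A(i/d,j/d))\Theta(t/T,j/d)$; (C) the deterministic interpolation-error term $-\gamma \sum_t [\Theta((t+1)/T,i/d) - \Theta(t/T,i/d) + \eta \sum_j A(i/d,j/d)\Theta(t/T,j/d)]$; and (D) the scaled noise sum $\eta\gamma \sum_t W(t/T,i/d)\, \varepsilon^t$. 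Raising to the fourth power and using $(a+b+c+d)^4 \le 64(a^4+b^4+c^4+d^4)$ reduces the task to four independent fourth-moment bounds.

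For term (A), I further split $W(t/T,i/d) W(t/T,j/d) = A(i/d,j/d) + (W(t/T,i/d) W(t/T,j/d) - A(i/d,j/d))$, and apply the estimates of $S_{i,1}$ and $S_{i,2}$ from the proof of Proposition \ref{prop:L2 tightness in time LLN} verbatim, with $V_t^d := d^{-1}\sum_i \mathbb{E}[|U^{d,T}(t/T,i/d)|^4]$ in place of $m_t^d$. This yields an upper bound of the form $C \eta^4 d^4 [(t_2-t_1)^3 + (t_2-t_1)] \sum_{t=t_1}^{t_2-1} V_t^d$. Term (B) reuses the same Type 1 / Type 2 pairing analysis, but now $\prod \dtheta^{r_k}_{j_k}$ is replaced by $\prod \Theta(r_k/T, j_k/d)$; since $\Theta$ is the unique continuous solution to the ODE \eqref{eq:mainmean ODE LLN}, a Gronwall estimate on $|\partial_s \Theta(s,\cdot)|_\infty \le \alpha C_2 |\Theta(s,\cdot)|_\infty$ produces a uniform sup-norm bound $K_\Theta$, leading to $(\eta\gamma)^4 \mathbb{E}[(\text{B})^4] \le C (\eta\gamma d)^4 (t_2-t_1)^2$. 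Term (D) copies the $S_{i,3}$ computation unchanged, yielding $(\eta\gamma)^4 \mathbb{E}[(\text{D})^4] \le C(\eta\gamma\sigma)^4 (t_2-t_1)^2$.

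Term (C), the pure interpolation error, is the main new piece. Using the ODE to write $\Theta((t+1)/T,i/d) - \Theta(t/T,i/d) = -\alpha \int_{t/T}^{(t+1)/T}\int_0^1 A(i/d,y)\Theta(s,y)\,dy\,ds$ and expanding $\eta \sum_j A(i/d,j/d) \Theta(t/T,j/d) = (\alpha/T)\cdot d^{-1}\sum_j A(i/d,j/d)\Theta(t/T,j/d)$, I bound the per-step discrepancy by the sum of a Riemann-sum error of order $T^{-1}d^{-1}$ (via Lipschitz continuity of $y \mapsto A(i/d,y)\Theta(s,y)$, inherited from Assumption \ref{ass:continuous data} and the spatial smoothness of $\Theta$ noted in Remark \ref{rem:LLN smoothness}) and a time-discretization error of order $T^{-2}$ (controlled by $|\partial_s \Theta| \le \alpha C_2 K_\Theta$). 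Summing over $(t_2-t_1)$ steps and raising to the fourth power gives $(\text{C})^4 \le C\gamma^4 (t_2-t_1)^4 (T^{-1}d^{-1} + T^{-2})^4$.

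Finally, I assemble the four bounds using $\eta = \alpha/(dT)$ (so $\eta d = \alpha/T$) together with the scaling hypotheses $\gamma T^{-1/2},\, \gamma d^{-1},\, \gamma\sigma d^{-1}T^{-1/2} \le C_{s,2}$. A direct check shows that (B), (C), and (D) are each $O((t_2-t_1)^2/T^2)$, while (A) is controlled by $C\, T^{-4}(t_2-t_1)^3 \sum_{t=t_1}^{t_2-1} V_t^d$. Specializing $t_1 = 0$, $t_2 = t$ and applying the discrete Gronwall inequality (exactly as at the end of the proof of Proposition \ref{prop:L2 tightness in time LLN}) yields a uniform bound $V_t^d \le C_\tau$ for all $t \le \lfloor \tau T \rfloor$. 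Plugging this uniform bound into the general inequality delivers \eqref{eq:L2 tightness in time CLT}. The main obstacle beyond bookkeeping is handling term (C): it requires quantitative $C^1$-regularity of the ODE limit $\Theta$ in both variables (which I read off from Remark \ref{rem:LLN smoothness}) and it is precisely this term that consumes the hypothesis $\gamma d^{-1} \le C_{s,2}$.
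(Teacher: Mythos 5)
Your proposal is correct and follows essentially the same route as the paper's proof: the same decomposition of the increment of $U^{d,T}$ via \eqref{eq:recurssion for fluctuation} (your term (A) is just the paper's $M_{i,1}+M_{i,2}$ before splitting off the mean), the same reuse of the $S_{i,1},S_{i,2},S_{i,3}$ estimates from Proposition \ref{prop:L2 tightness in time LLN} with $V_t^d$ playing the role of $m_t^d$, the same $C^1$-regularity bound on the interpolation-error term (C) yielding the $\gamma^4(t_2-t_1)^4(T^{-1}d^{-1}+T^{-2})^4$ contribution, and the same discrete Gronwall step to get a uniform bound on $V_t^d$ before assembling everything under the stated scaling hypotheses. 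No gaps.
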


\begin{proof}[Proof of Proposition \ref{prop:L2 tightness in time CLT}.]
From \eqref{eq:recurssion for fluctuation}, we have that for any $0\le t_1<t_2\le N\coloneqq\lfloor \tau T \rfloor$, 
\begin{align}\label{eq:difference in time fluctuation}
    & \quad U^{d,T}(\frac{t_2}{T},\frac{i}{d})-U^{d,T}(\frac{t_1}{T},\frac{i}{d}) \nonumber\\
    &=- \underbrace{\eta \sum_{t=t_1}^{t_2-1}\sum_{j=1}^d A(\frac{i}{d},\frac{j}{d})U^{d,T}(\frac{t}{T},\frac{j}{d})}_{M_{i,1}} \\
    &\quad-\underbrace{ \eta  \sum_{t=t_1}^{t_2-1} \sum_{j=1}^d \big(W(\frac{t}{T},\frac{i}{d})W(\frac{t}{T},\frac{j}{d})-\mb{E}\big[W(\frac{t}{T},\frac{i}{d})W(\frac{t}{T},\frac{j}{d})\big]\big)U^{d,T}(\frac{t}{T},\frac{j}{d})}_{M_{i,2}} \nonumber\\
    &\quad -\underbrace{\eta \gamma \sum_{t=t_1}^{t_2-1} \sum_{j=1}^d \big(W(\frac{t}{T},\frac{i}{d})W(\frac{t}{T},\frac{j}{d})-\mb{E}\big[W(\frac{t}{T},\frac{i}{d})W(\frac{t}{T},\frac{j}{d})\big]\big)\Theta(\frac{t}{T},\frac{j}{d}) }_{M_{i,3}}\nonumber\\
    &\quad-\underbrace{\gamma \big(\Theta(\frac{t_2}{T},\frac{i}{d})- \Theta(\frac{t_1}{T},\frac{i}{d})+\eta \sum_{t=t_1}^{t_2-1}\sum_{j=1}^d A(\frac{i}{d},\frac{j}{d})\Theta(\frac{t}{T},\frac{j}{d}) \big)}_{M_{i,4}} +\underbrace{ \eta \gamma \sum_{t=t_1}^{t_2-1} W(\frac{t}{T},\frac{i}{d})\varepsilon^t }_{M_{i,5}} \nonumber.
\end{align}
Therefore we have
\begin{align}\label{eq:4th moment difference fluctuation}
     \mb{E}\big[\big|U^{d,T}(\frac{t_2}{T},\frac{i}{d})-U^{d,T}(\frac{t_1}{T},\frac{i}{d})\big|^4\big]\le 125\sum_{j=1}^5 \mb{E}[M_{i,j}^4].
\end{align}
Next we will bound the expectation of the right-hand side term by term. Many terms can be estimated using the proof of Proposition \ref{prop:L2 tightness in time LLN}. Define $n^d_t\coloneqq\frac{1}{d}\sum_{i=1}^d \mb{E}\big[\big| U^{d,T}(\frac{t}{T},\frac{i}{d}) \big|^4 \big] $. $\mb{E}[M_{i,1}^4]$ can be estimated similar to $\mb{E}[S_{i,1}]$ in the proof of Proposition \ref{prop:L2 tightness in time LLN}, and we have
\begin{align*}
    \mb{E}\big[ M_{i,1}^4 \big]&=\eta^4\mb{E}\big[ \sum_{r_1,r_2,r_3,r_4=t_1}^{t_2-1}\sum_{j_1,j_2,j_3,j_4=1}^d \prod_{k=1}^4 A(\frac{i}{d},\frac{j_k}{d})\prod_{k=1}^4 U^{d,T}(\frac{r_k}{T},\frac{j_k}{d})\big] \\
    &\le  C_2^4 \eta^4 d^4 (t_2-t_1)^3\sum_{t=t_1}^{t_2-1} n^d_t.
\end{align*}
$\mb{E}[M_{i,2}^4]$ can be estimated similar to $\mb{E}[S_{i,2}]$ in the proof of Proposition \ref{prop:L2 tightness in time LLN}, and we have
\begin{align*}
    \mb{E}\big[ \purple{M}_{i,2}^4 \big]&\le 3\left( C_8'+C_{5}'^2 \right)\eta^4 d^4 (t_2-t_1)\sum_{t=t_1}^{t_2-1} n^d_t .
\end{align*}
Similar to how we bound $\mb{E}[M_{i,2}^4]$, we have
\begin{align*}
    \mb{E}\big[ M_{i,3}^4 \big]&\le 3\big(C_8' +C_{5}'^2\big) (t_2-t_1) \eta^4 \gamma^4\mb{E} \big[ \sum_{t=t_1}^{t_2-1}\sum_{j_1,j_2,j_3,j_4=1}^d \prod_{k=1}^4 \Theta(\frac{t}{T},\frac{j_k}{d})\big] \\
    &\le 3\left( C_8'+C_{5}'^2 \right)\lv \Theta \rv_\infty^4  \gamma^4 \eta^4 d^4 (t_2-t_1)^2\\
    &=O(\gamma^4\eta^4 d^4(t_2-t_1)^2),
\end{align*}
where the second inequality follows from the fact that $$\lv \Theta \rv_\infty\coloneqq\sup_{s\in [0,\tau],x\in [0,1]}|\Theta(s,x)|<\infty.$$ Next due to the fact that $A,\Theta$ are $C^1$ in all variables, we have
\begin{align*}
    \mb{E}\left[ M_{i,4}^2 \right]&=\gamma^4 \eta^4 d^4 T^4 \big( -\int_{\frac{t_1}{T}}^{\frac{t_2}{T}} \int_0^1 A(\frac{i}{d},y)\Theta(s,y)\mathrm{d}y\mathrm{d}s +\frac{1}{dT} \sum_{t=t_1}^{t_2-1}\sum_{j=1}^d A(\frac{i}{d},\frac{j}{d})\Theta(\frac{t}{T},\frac{j}{d}) \big)^4 \\
    &=O( \gamma^4 \eta^4 {(t_2-t_1)^4} )+O(\gamma^4 \eta^4 d^4 {(t_2-t_1)^4}{T^{-4}}).
\end{align*}
$\mb{E}[M_{i,5}^4]$ can be estimated similar to $\mb{E}[S_{i,3}]$ in the proof of Proposition \ref{prop:L2 tightness in time LLN}, and we have
\begin{align*}
    \mb{E}\left[ M_{i,5}^4 \right]&\le (C_{5} C_1+C_2^2 )\sigma^4\eta^4 \gamma^4  (t_2-t_1)^2=O(\gamma^4 \eta^4 \sigma^4 (t_2-t_1)^2 ).
\end{align*}
Combining all the estimations and pick $t_2=t,t_1=0$ in \eqref{eq:4th moment difference fluctuation},
\begin{align*}
    n^d_t &=\frac{1}{d}\sum_{i=1}^d \mb{E}\big[ \big| U^{d,T}(\frac{t}{T},\frac{i}{d}) \big|^4 \big] \le 8n^d_0+ \frac{8}{d}\sum_{i=1}^d \mb{E}\big[ \big| U^{d,T}(\frac{t}{T},\frac{i}{d})-U^{d,T}(0,\frac{i}{d}) \big|^4 \big] \\
    &\le 8 n^d_0+2000\big( C_2^4 +3( C_8+C_{5}^2)  \big)\eta^4 d^4t^3\sum_{k=0}^{t-1} n^d_k+C_{d,T}(t),
\end{align*}
where $C_{d,T}(t)\coloneqq O(\gamma^4\eta^4 d^4t^2+ \gamma^4 \eta^4 {t^4} +\gamma^4 \eta^4 d^4 {t^4}{T^{-4}}+\gamma^4 \eta^4 \sigma^4 t^2 )$. According to the discrete Gronwall's inequality, we have
\begin{align*}
    n^d_t&\le C_{d,T}(t)+\Tilde{C}\eta^4 d^4t^3\sum_{k=0}^{t-1}C_{d,T}(k) \exp\big(\Tilde{C}\eta^4 d^4t^3(t-k-1)\big), 
\end{align*}
where $\Tilde{C}=2000( \purple{C_2^4}+3( C_8+C_{5}^2) )$. Since $\eta d T\le C_{s,1}$, $2000\big( C_2^4+3( C_8+C_{5}^2) \big)\eta^4 d^4t^3(t-k-1)\le C_{\tau}C_{s,1}^4\tau^4$ for any $0\le k<t$, $t\le N$ and $C_{\tau,1}$ is a constant independent of $d,T,\sigma$. Since $C_{d,T}(k)$ is increasing with $k$, there exists a constant $C_{\tau,2}$ independent of $d,T,\sigma$ such that
\begin{align}\label{eq:upper bound for L4 average CLT}
    n^d_t&\le C_{d,T}(t)+ 2000\big( C_2^4 +3( C_8+C_{5}^2) \big)\exp\left( C_{\tau,1}C_{s,1}^4\sigma^{-4}\tau^4 \right) \eta^4 d^4 t^3  \sum_{k=0}^{t-1} C_{d,T}(k)\nonumber\\
    &\le C_{\tau,2} C_{d,T}(t).
\end{align}
Plug \eqref{eq:upper bound for L4 average CLT} into \eqref{eq:4th moment difference fluctuation} and take expectations. Then, we get
\begin{align*}
    &\mb{E}\big[ \big| U^{d,T}(\frac{t_2}{T},\frac{i}{d})-U^{d,T}(\frac{t_1}{T},\frac{i}{d}) \big|^4 \big]\\
    \le&  125\big( C_2^4 +3( C_8'+C_{5}'^2)  \big)\eta^4 d^4(t_2-t_1)^3\sum_{t=t_1}^{t_2-1} n^d_t +C_{d,T}(t_2-t_1)\\
    \le& 125C_{\tau,2}\big( C_2^4 +3( C_8'+C_{5}'^2)  \big)\eta^4 d^4(t_2-t_1)^3\sum_{t=t_1}^{t_2-1} C_{d,T}(t) +C_{d,T}(t_2-t_1).
\end{align*}
Observe that since $\max(\eta dT,\eta \sigma T^{\frac{1}{2}})\le C_{s,1}$, we have,
\begin{align*}
    C_{d,T}(t)=\big( \frac{t}{T} \big)^2   O( \gamma^4 T^{-2}+\gamma^4 d^{-4}+{ \gamma^4 \sigma^4 d^{-4}T^{-2}} ).
\end{align*}
 Furthermore, note that 
 $$
 \max(\gamma T^{-\frac{1}{2}}, \gamma d^{-1},\gamma \sigma d^{-1}T^{-\frac{1}{2}})\le C_{s,2}\quad\text{and}\quad
 C_{d,T}(t)\le C_{\tau,3}C_{s,2}^4\big( \frac{t}{T} \big)^2
 $$ 
 for some positive constant $C_{\tau,3}$ independent of $d,T,\sigma,\gamma$. Therefore,
\begin{align*}
     &\mb{E}\big[ \big| U^{d,T}(\frac{t_2}{T},\frac{i}{d})-U^{d,T}(\frac{t_1}{T},\frac{i}{d}) \big|^4 \big]\\
     \le& 125C_{\tau,2}\big( C_2^4 +3( C_8'+C_{5}'^2)  \big)\eta^4 d^4(t_2-t_1)^4 C_{\tau,3}C_{s,2}^4 \tau^2 +C_{\tau,3} \big( \frac{t_2-t_1}{T}\big)^2\\
     \le &125C_{\tau,2}\big( C_2^4 +3( C_8'+C_{5}'^2)  \big) C_{s,1}^4 C_{\tau,3}C_{s,2}^4 \tau^2 \big(\frac{t_2-t_1}{T}\big)^4 +C_{\tau,3} \big( \frac{t_2-t_1}{T}\big)^2.
\end{align*}
Last \eqref{eq:L2 tightness in space CLT} is proved since $\big(\frac{t_2-t_1}{T}\big)^2\le \tau^2$.
\end{proof}

\begin{prop}\label{prop:L2 tightness in time and space CLT} \purple{Let Assumptions \ref{ass:tightness in space LLN} and \ref{ass:continuous data} hold and further suppose the initial conditions in Theorem \ref{thm:mainmean} are satisfied}. For any $d,T>0$, then there exists a constant $C>0$ independent of $d,T$ such that for any $d\ge 1,T>0$, any $i,j\in [d]$
and any \textcolor{black}{$0\le t_1<t_2\le \lfloor{\tau T}\rfloor$},
\textcolor{black}{
\small{
\begin{align}\label{eq:L2 tightness in space and time CLT}
\mb{E}\big[\big| \big(U^{d,T}(\frac{t_2}{T},\frac{i}{d})-U^{d,T}(\frac{t_2}{T},\frac{j}{d})\big)-\big(U^{d,T}(\frac{t_1}{T},\frac{i}{d})-U^{d,T}(\frac{t_1}{T},\frac{j}{d})\big) \big|^4\big] &\le C \big( \frac{i-j}{d} \big)^4 \big( \frac{t_2-t_1}{T} \big)^2.
\end{align}
}
Furthermore, if the initial conditions in Theorem \ref{thm:mainfluc} hold}, then for any $i,j\in [d]$ and any $0\le t\le \lfloor \tau T\rfloor$,
\begin{align}\label{eq:L2 tightness in space CLT}
    \mb{E}\big[\big| U^{d,T}(\frac{t}{T},\frac{i}{d})-U^{d,T}(\frac{t}{T},\frac{j}{d})\big|^4\big]\le 8\left( C\tau^2 +M^4 \right)\big( \frac{i-j}{d} \big)^4,
\end{align}
\purple{where $M$ is the uniform constant in the initial condition in Theorem \ref{thm:mainfluc}}.
\end{prop}

\begin{proof}[Proof of Proposition \ref{prop:L2 tightness in time and space CLT}.]  From \eqref{eq:fluctuation} we have for any $0\le t_1<t_2\le N\coloneqq\lfloor \tau T \rfloor$ and any $i,j\in [d]$,
{\small
\begin{align*} 
    &\quad \big(U^{d,T}(\frac{t_2}{T},\frac{i}{d})-U^{d,T}(\frac{\purple{t_2}}{T},\frac{j}{d})\big)-\big(U^{d,T}(\frac{\purple{t_1}}{T},\frac{i}{d})-U^{d,T}(\frac{\purple{t_1}}{T},\frac{j}{d})\big)\\
    &= -\underbrace{\eta \sum_{t=t_1}^{t_2-1}\sum_{l=1}^d \big( A(\frac{i}{d},\frac{l}{d})-A(\frac{j}{d},\frac{l}{d}) \big)U^{d,T}(\frac{t}{T},\frac{l}{d})}_{M_{i,j,1}} \\
    &\quad-\underbrace{\eta \sum_{t=t_1}^{t_2-1}\sum_{l=1}^d \left( x^t_ix^t_l-\mb{E}\left[x^t_ix^t_l\right]-x^t_jx^t_l+\mb{E}\left[x^t_jx^t_l\right] \right)U^{d,T}(\frac{t}{T},\frac{l}{d})}_{M_{i,j,2}} \\
    &\quad +\underbrace{\alpha\gamma \int_{\frac{t_1}{T}}^{\frac{t_2}{T}}\int_0^1 \big( A(\frac{i}{d},y)-A(\frac{j}{d},y) \big)\Theta(s,y)dyds-\eta \gamma \sum_{l=1}^d \big( A(\frac{i}{d},\frac{l}{d})-A(\frac{j}{d},\frac{l}{d}) \big)\Theta(\frac{t}{T},\frac{l}{d})}_{M_{i,j,3}} \\
    &\quad  -\underbrace{\eta\gamma\sum_{t=t_1}^{t_2-1} \sum_{l=1}^d \left( x^t_ix^t_l-\mb{E}\left[x^t_ix^t_l\right]-x^t_jx^t_l+\mb{E}\left[x^t_jx^t_l\right] \right)\Theta(\frac{t}{T},\frac{l}{d})}_{M_{i,j,4}}+\underbrace{\eta \gamma \sum_{t=t_1}^{t_2-1} \big(x^t_i-x^t_j\big) \varepsilon^t}_{M_{i,j,5}}.
\end{align*}
}
Therefore if we define $\Delta^{d,T}(t,i,j)\coloneqq U^{d,T}(\frac{t}{T},\frac{i}{d})-U^{d,T}(\frac{t}{T},\frac{j}{d})$,  we have
\begin{align}\label{eq:sqaure difference in time and space CLT}
\begin{aligned}
   &\mb{E} \big[ \big| \Delta^{d,T}(t_2,i,j)-\Delta^{d,T}(t_1,i,j) \big|^4 \big]  \le  125 \sum_{l=1}^5 \mb{E}\big[ M_{i,j,l}^{\purple{4}} \big].
\end{aligned}
\end{align}
Next we estimate the right-hand side of \eqref{eq:sqaure difference in time and space CLT} term by term and most terms are bounded based on the proof of Proposition \ref{prop:L2 tightness in time and space LLN}. $\mb{E}[M_{i,j,1}^{\purple{4}}]$ can be upper bounded similar to $\mb{E}[S_{i,j,1}]$ in the proof of Proposition \ref{prop:L2 tightness in time and space LLN}, and we have
\begin{align*}
    \mb{E}\big[ M_{i,j,1}^{\purple{4}} \big]&\le C_{3}^4  \big( \frac{i-j}{d} \big)^4 (t_2-t_1)^3 \eta^4 d^4 \sum_{t=t_1}^{t_2-1} n^d_t= \big( \frac{i-j}{d} \big)^4 \big(\frac{t_2-t_1}{T}\big)^2 O(\eta^4 d^4 T^3\sum_{t=t_1}^{t_2-1} n^d_t).
\end{align*}
$\mb{E}[M_{i,j,2}^{\purple{4}}]$ can be upper bounded similar to $\mb{E}[S_{i,j,2}]$ in the proof of Proposition \ref{prop:L2 tightness in time and space LLN}, and we have
\begin{align*}
    \mb{E}\big[ M_{i,j,2}^{\purple{4}} \big]&\le \big(C_9'^4 +C_{6}'^4 \big) \big( \frac{i-j}{d} \big)^4 (t_2-t_1) \eta^4 d^4 \sum_{t=t_1}^{t_2-1} n^d_t\\
    &=\big( \frac{i-j}{d} \big)^4 \big(\frac{t_2-t_1}{T}\big)^2 O(\eta^4 d^4 T^2\sum_{t=t_1}^{t_2-1} n^d_t).
\end{align*}
Since $\Theta$ is $C^1$ in both variables and $\partial_1 A(x,\cdot)\in C^1([0,1])$ for any $x\in [0,1]$, $\mb{E}[M_{i,j,3}^{\purple{4}}]$ can be estimated as
\begin{align*}
    &\quad\mb{E}\big[ M_{i,j,3}^{\purple{4}} \big]\\
    &=\alpha^4 \gamma^4 \big( \int_{\frac{i}{d}}^{\frac{j}{d}}\int_{\frac{t_1}{T}}^{\frac{t_2}{T}}\int_0^1 \partial_1 A(z,y) \Theta(s,y)\mathrm{d}y\mathrm{d}s \mathrm{d}z-\int_{\frac{i}{d}}^{\frac{j}{d}}\frac{1}{dT}\sum_{l=1}^d \partial_1 A(z,\frac{l}{d})\Theta(\frac{t}{T},\frac{l}{d}) \mathrm{d}z \big) ^4\\
    &=O\bigg(\gamma^4 \big(\frac{i-j}{d}\big)^4 \big( \frac{t_2-t_1}{T} \big)^4 \frac{1}{T^4} \bigg)+O\bigg(\gamma^4 \big(\frac{i-j}{d}\big)^4 \big( \frac{t_2-t_1}{T} \big)^4 \frac{1}{d^4} \bigg)\\
    &=\big(\frac{i-j}{d}\big)^4 \big( \frac{t_2-t_1}{T} \big)^4 O\bigg(  \gamma^4T^{-4}+\gamma^4d^{-4} \bigg).
\end{align*}
Next, $\mb{E}[M_{i,j,4}^{\purple{4}}]$ can be estimated similar to $\mb{E}[M_{i,j,2}^{\purple{4}}]$: 
\begin{align*}
    \mb{E}\big[ M_{i,j,4}^{\purple{4}} \big]&\le C_9'^4\eta^4  \gamma^4 \big( \frac{i-j}{d} \big)^4 \sum_{t=t_1}^{t_2-1}\sum_{l_1,l_2,l_3,l_4=1}^d \mb{E}\big[ \big|\prod_{k=1}^4\Theta(\frac{t}{T},\frac{l_k}{d})  \big| \big] \\
    &\ + C_{6}'^4\eta^4 \gamma^4 \big( \frac{i-j}{d} \big)^4 \sum_{t',t=t_1}^{t_2-1}\sum_{l_1,l_2,l_3,l_4=1}^d \mb{E}\big[ \big|\prod_{k=1}^2 \Theta(\frac{t}{T},\frac{l_k}{d}) \prod_{k=3}^4 \Theta(\frac{t'}{T},\frac{l_k}{d}) \big| \big]\\
    &\le \big( C_9'^4+C_{6}'^4 \big)   \lv \Theta \rv_\infty^4 \big( \frac{i-j}{d} \big)^4 \eta^4\gamma^4 d^4  {(t_2-t_1)^2} \\
    &= \big( \frac{i-j}{d} \big)^4  \big(\frac{t_2-t_1}{T}\big)^2 O(\eta^4 \gamma^4 d^4 T^2).
    \end{align*}
Last $\mb{E}[M_{i,j,5}^{\purple{4}}]$ can be upper bounded similar to $\mb{E}[S_{i,j,3}]$ in the proof of Proposition \ref{prop:L2 tightness in time and space LLN}, and we have
\begin{align*}
    \mb{E}\big[ M_{i,j,5}^{\purple{4}}\big]&\le \big( C_1 C_{7}^4+C_{4}^4 \big)  \big( \frac{i-j}{d} \big)^4 \big(\frac{t_2-t_1}{T}\big)^2 \eta^4 \gamma^4 \sigma^4 T^2.
\end{align*}
In the proof of Proposition \ref{prop:L2 tightness in time CLT}, we have shown that $n^d_t\le C_{\tau,2}C_{d,T}(t)\le C_{\tau,2}C_{\tau,3}C_{s,2}^4 \big(\frac{t}{T}\big)^2=O(1)$. Therefore, apply this estimation to \eqref{eq:sqaure difference in time and space CLT}, along with the estimations on $\mb{E}[M_{i,j,\cdot}^{\purple{4}}]$'s, and we have
\begin{align*}
      &\qquad \mb{E}\big[\big| \Delta^{d,T}(t_2,i,j)-\Delta^{d,T}(t_1,i,j) \big|^4\big] \\
      &\le \big( \frac{i-j}{d} \big)^4 \big(\frac{t_2-t_1}{T}\big)^2 O\bigg(\eta^4 d^4 T^3 \sum_{t=t_1}^{t_2-1}n^d_t+\gamma^4 T^{-4}+\gamma^4 d^{-4}+\eta^4 \gamma^4 d^4 T^2+\eta^4 \gamma^4 \sigma^4 T^2\bigg)\\
      &=\big( \frac{i-j}{d} \big)^4 \big(\frac{t_2-t_1}{T}\big)^2 O\big(\eta^4 d^4 T^4 +\gamma^4 T^{-4}+\gamma^4 d^{-4}+\eta^4 \gamma^4 d^4 T^2+\eta^4 \gamma^4 \sigma^4 T^2\big).
\end{align*}
Since $\max(\eta d T,\eta \sigma T^{\frac{1}{2}})\le C_{s,1}$ and $\max(\gamma T^{-\frac{1}{2}}, \gamma d^{-1}, \gamma \sigma d^{-1}T^{-\frac{1}{2}} )\le C_{s,2}$, 
\[
O\big(\eta^4 d^4 T^4 +\gamma^4 T^{-4}+\gamma^4 d^{-4}+\eta^4 \gamma^4 d^4 T^2+\eta^4 \gamma^4 \sigma^4 T^2\big)=O(1),
\]
and therefore there exists a uniform constant $C$ such that 
\begin{align*}
      \mb{E}\big[\big| \Delta^{d,T}(t_2,i,j)-\Delta^{d,T}(t_1,i,j) \big|^4\big] &\le C \big( \frac{i-j}{d} \big)^4 \big( \frac{t_2-t_1}{T} \big)^2.
\end{align*}
Picking $t_2=t$ and $t_1=0$, we get
\begin{align*}
    &\quad\mb{E}\big[ \big| U^{d,T}(\frac{t}{T},\frac{i}{d})-U^{d,T}(\frac{t}{T},\frac{j}{d}) \big|^4 \big]\\
    &\le 8\mb{E}\big[ \big| U^{d,T}(0,\frac{i}{d})-U^{d,T}(0,\frac{j}{d}) \big|^4 \big]+8C\big( \frac{i-j}{d} \big)^4 \big( \frac{t_2-t_1}{T} \big)^2 \\
    &\le  8 \big( C\tau^2+M^4 \big)\big( \frac{i-j}{d} \big)^4, 
    \end{align*}
    \purple{where the last inequality follows from the initial condition in Theorem \ref{thm:mainfluc}.}
\end{proof}

\begin{proof}[Proof of Theorem \ref{thm:tightness of continuous interpolation CLT}]
    When $\eta=\frac{\alpha}{dT}$ and $\sigma=o(dT^{\frac{1}{2}})$, there exists a uniform constant $C_{s,1}$ such that \textcolor{black}{for all $d\ge 1,T>0$ along the limiting sequence}, 
 $\max(\eta d T, \eta \sigma T^{\frac{1}{2}})\le C_{s,1}$. Therefore, given Proposition \ref{prop:L2 tightness in time CLT} and Proposition \ref{prop:L2 tightness in time and space CLT}, the proof of Theorem \ref{thm:tightness of continuous interpolation CLT} is exactly the same as the proof of Theorem \ref{thm:tightness of continuous interpolation LLN}. Hence, we skip the details.
\end{proof}
\subsection{Limit identification}\label{sec:cltproof}

\begin{proof}[Proof of Theorem~\ref{thm:masterclttheorem}]
For any $0\le t\le \lfloor \tau T \rfloor-1\coloneqq N-1$, $i\in \purple{[d-1]}$, according to \eqref{eq:approximation to Theta LLN} and \eqref{eq:fluctuation}, we have for all $x\in [\frac{i}{d},\frac{i+1}{d}]$,
\begin{align*}
    &\qquad U^{d,T}(\frac{t+1}{T},x)-U^{d,T}(\frac{t}{T},x)\\
    &=- \eta(i+1-dx) \sum_{j=1}^d W(\frac{t}{T},\frac{i}{d})W(\frac{t}{T},\frac{j}{d})U^{d,T}(\frac{t}{T},\frac{j}{d}) \\
    &\quad- \eta(dx-i) \sum_{j=1}^d W(\frac{t}{T},\frac{i+1}{d})W(\frac{t}{T},\frac{j}{d})U^{d,T}(\frac{t}{T},\frac{j}{d}) \\
    &\quad -\eta \gamma (i+1-dx)\sum_{j=1}^d \big(W(\frac{t}{T},\frac{i}{d})W(\frac{t}{T},\frac{j}{d})-\mb{E}\big[W(\frac{t}{T},\frac{i}{d})W(\frac{t}{T},\frac{j}{d})\big]\big)\Theta(\frac{t}{T},\frac{j}{d}) \\
    &\quad -\eta \gamma (dx-i)\sum_{j=1}^d \big(W(\frac{t}{T},\frac{i+1}{d})W(\frac{t}{T},\frac{j}{d})-\mb{E}\big[W(\frac{t}{T},\frac{i+1}{d})W(\frac{t}{T},\frac{j}{d})\big]\big)\Theta(\frac{t}{T},\frac{j}{d}) \\
    &\quad-\gamma \big(\Theta(\frac{t+1}{T},x)- \Theta(\frac{t}{T},x)+\eta \sum_{j=1}^d A(x,\frac{j}{d})\Theta(\frac{t}{T},\frac{j}{d}) \big) \\
    &\quad + \eta \gamma(i+1-dx) W(\frac{t}{T},\frac{i}{d})\varepsilon^t+ \eta\gamma (dx-i)  W(\frac{t}{T},\frac{i+1}{d})\varepsilon^t. 
\end{align*}
If we apply a bounded smooth test function $f:[0,\tau]\to\mb{R}$ on both sides, we get for any $s\in(0,\tau)$:
\begin{align}\label{eq:CLT test function applied sum}
    &\sum_{t=0}^{\lfloor sT\rfloor-1} f(\frac{t}{T})\big( U^{d,T}(\frac{t+1}{T},x)-U^{d,T}(\frac{t}{T},x) \big) \nonumber \\
    =&-(i+1-dx)\sum_{k=1}^3 P_{i,k}^s-(dx-i)\sum_{k=1}^3 P_{i+1,k}^s-P_4^s , 
    \end{align}
    where for any $i\in [d]$,
    \begin{align*}
    P_{i,1}^s&=\eta \sum_{t=0}^{\lfloor sT\rfloor-1}\sum_{j=1}^df(\frac{t}{T}) W(\frac{t}{T},\frac{i}{d})W(\frac{t}{T},\frac{j}{d})U^{d,T}(\frac{t}{T},\frac{j}{d}), \\
     P_{i,2}^s&=-\eta\gamma  \sum_{t=0}^{\lfloor sT\rfloor-1}f(\frac{t}{T})W(\frac{t}{T},\frac{i}{d})\varepsilon^t ,\\
    P_{i,3}^s&=\eta \gamma  \sum_{t=0}^{\lfloor sT\rfloor-1}\sum_{j=1}^df(\frac{t}{T}) \big(W(\frac{t}{T},\frac{i}{d})W(\frac{t}{T},\frac{j}{d})-\mb{E}\big[W(\frac{t}{T},\frac{i}{d})W(\frac{t}{T},\frac{j}{d})\big]\big)\Theta(\frac{t}{T},\frac{j}{d}),\\
    P_{4}^s&=\gamma  \sum_{t=0}^{\lfloor sT\rfloor-1}f(\frac{t}{T})\big(\Theta(\frac{t+1}{T},x)- \Theta(\frac{t}{T},x)+\eta \sum_{j=1}^d  A(x,\frac{j}{d})\Theta(\frac{t}{T},\frac{j}{d}) \big).
\end{align*}
Similar to the way we derive \eqref{eq:LHS of test function applied sum}, we can write the left-hand side of \eqref{eq:CLT test function applied sum} as
\begin{align}\label{eq:LHS CLT test function applied sum}
    \text{LHS}\eqref{eq:CLT test function applied sum}=- \int_0^{\frac{\lfloor sT \rfloor}{T}} U(u,x)f'(u)\mathrm{d}u+f(s)U(s,x)-f(0)U(0,x)+o(1).
\end{align}
Next, we look at the right-hand side of \eqref{eq:CLT test function applied sum}. There are 4 types of terms on the right-hand side of \eqref{eq:CLT test function applied sum}. 
\purple{Based on the same space-time interpolation introduced in Section~\ref{sec:Introduction}, for each $l=1,2,3$, $\{P_{i,l}^s\}$ can be interpolated as a function in $C([0,\tau];C([0,1]))$, denoted as $P^{d,T}_l$. $P_4^s$ is already continuous in variable $x$ and it can be interpolated as a function in $C([0,\tau];C([0,1]))$, denoted as $P^{d,T}_4$.} 
\purple{According to Lemma \ref{lem:joint tightness CLT}, $(U^{d,T}, P_1^{d,T}, P_2^{d,T}, P_3^{d,T}, P_4^{d,T})$ is tight in $C([0,\tau];C([0,1]))^{\otimes 5}$. Therefore, the sequential weak limit exists in $C([0,\tau];C([0,1]))^{\otimes 5}$. The sequential weak limit of $(U^{d,T}, P_1^{d,T}, P_2^{d,T}, P_3^{d,T}, P_4^{d,T})$ in $C([0,\tau];C([0,1]))^{\otimes 5}$ is derived based on the weak limit of each term. Note that for $U^{d,T}$, the weak limit, denoted as $U$, satisfies an equation based the weak limits of other terms, which are identified as follows:
\begin{itemize}
    \item  For $P_1^{d,T}$, weak limit is identified based on the continuous mapping theorem~\cite[Theorem 2.7]{billingsley2013convergence};
    \item  For $P_2^{d,T}$, $P_3^{d,T}$, to identify weak limit of an infinite dimensional process, it suffices to identify the weak limits for each finite dimensional weak limit. We identify the finite dimensional limit via pointwise central limit theorem.
    \item For $P_4^{d,T}$, we use the fact that convergence in probability to zero implies convergence in distribution to zero. 
\end{itemize}
}
Now, we look at the limit of each of the terms.
\begin{itemize}
    \item [(1)] For terms related to $P_{i,1}^s$ for some $i\in[d]$, we can deal with them akin to how we deal with $N_{i,1}^s,N_{2,i}^s$ in the proof of Theorem \ref{thm:LLN}. We get
    \begin{align*}
       -(i+1-dx) P_{i,1}^s-(dx-i) P_{i+1,1}^s = -\alpha \int_0^{\frac{\lfloor sT\rfloor}{T}}\int_0^1 f(u)A(x,y)U(u,y)\mathrm{d}y\mathrm{d}u+o(1).
    \end{align*}
    \item [(2)] For terms related to $P_{i,2}^s$ for some $i\in[d]$, we can deal with them akin to how we deal with $N_{i,2}^s$ in the proofs of Theorem \ref{thm:LLN}. \purple{There exists a centered Gaussian random field process $\{\xi_{2}(s,x)\}_{s\in [0,\tau],x\in [0,1]}$ such that for any $s_1,s_2\in [0,\tau]$ and $x,y\in [0,1]$,
    \begin{align*}
       \mb{E}[\xi_2(s_1,x)\xi_2(s_2,y)]=\sigma_2(s_1,s_2,x,y)\quad\text{with}\quad \sigma_2(s_1,s_2,x,y)= (s_1\wedge s_2) A(x,y),
    \end{align*}}
    and
    \begin{equation*}
        P_{i,2}^s=\left\{\begin{aligned}
            & \quad o(1) \qquad &\text{if } \gamma \sigma d^{-1}T^{-\frac{1}{2}}\to 0,\\
            & \alpha \beta \int_0^s f(u) \mathrm{d}\xi_2(u,\frac{i}{d}) +o(1) \qquad &\text{if } \gamma \sigma d^{-1}T^{-\frac{1}{2}}\to \beta\in (0,\infty).
        \end{aligned}
        \right.
    \end{equation*}
    Therefore
    \begin{itemize}
        \item [(2.1)] When $\gamma \sigma d^{-1}T^{-\frac{1}{2}}\to 0$, $-(i+1-dx) P_{i,2}^s-(dx-i) P_{i+1,2}^s\to 0$ in probability.
        \item [(2.2)] When  $\gamma \sigma d^{-1}T^{-\frac{1}{2}}\to \beta\in (0,\infty)$,  
        \[
        -(i+1-dx) P_{i,2}^s-(dx-i) P_{i+1,2}^s= \alpha \beta \int_0^{\frac{\lfloor sT \rfloor}{T}} f(u) \mathrm{d}\xi_2(u,x)+o(1).
        \]
    \end{itemize}
    
    \item [(3)] To study the terms related to $P_{i,3}$ for some $i\in [d]$, we first define 
    \[
    Z^{d,T}_{\gamma}(\frac{t}{T},\frac{j}{d})\coloneqq\frac{\gamma}{dT}\sum_{l=1}^d f(\frac{t}{T}) \big(W(\frac{t}{T},\frac{j}{d})W(\frac{t}{T},\frac{l}{d})-\mb{E}\big[W(\frac{t}{T},\frac{j}{d})W(\frac{t}{T},\frac{l}{d})\big]\big)\Theta(\frac{t}{T},\frac{l}{d}),
    \]
    for any $t\le \lfloor sT\rfloor$, $j\in [d]$. Then we can write
    \begin{align*}
       P_{i,3}^s= (\eta d T) \sum_{t=0}^{\lfloor sT\rfloor-1} Z^{d,T}_{\gamma}(\frac{t}{T},\frac{i}{d}).
    \end{align*}
    Notice that $\mb{E}\big[ Z^{d,T}_\gamma(\frac{t}{T},\frac{i}{d}) \big]=0$ and $\{Z^{d,T}_\gamma(\frac{t_1}{T},\frac{i}{d})\}_{i\in [d]}$ is independent of $\{Z^{d,T}_\gamma(\frac{t_2}{T},\frac{i}{d})\}_{i\in [d]}$ when $t_1\neq t_2$. For any $0\le s_1<s_2\le \tau $, $i,j\in [d]$, we compute
    \begin{align*}
        &\mb{E}\big[ P_{i,3}^{s_1}P_{j,3}^{s_2}\big]\\
        =&\left(\eta d T\right)^2\mb{E}\big[ \sum_{t=0}^{\lfloor s_1 T\rfloor-1} Z^{d,T}_{\gamma}(\frac{t}{T},\frac{i}{d})Z^{d,T}_{\gamma}(\frac{t}{T},\frac{j}{d}) \big] \\
        =&\left(\eta d T\right)^2 \frac{\gamma^2}{d^2 T^2} \sum_{t=0}^{\lfloor s_1 T\rfloor-1}\sum_{l_1,l_2=1}^d f(\frac{t}{T})^2 \Theta(\frac{t}{T},\frac{l_1}{d})\Theta(\frac{t}{T},\frac{l_2}{d})\Tilde{B}(\frac{i}{d},\frac{l_1}{d},\frac{j}{d},\frac{l_2}{d})\\
        =&\frac{\alpha^2 \gamma^2}{T^2}\sum_{t=0}^{\lfloor s_1 T\rfloor-1} \big( \int_0^1\int_0^1 f(\frac{t}{T})^2 \Theta(\frac{t}{T},x)\Theta(\frac{t}{T},y)\Tilde{B}(\frac{i}{d},x,\frac{j}{d},y)\mathrm{d}x\mathrm{d}y+o(1)\big),
    \end{align*}
    where $\Tilde{B}(x_1,x_2,x_3,x_4)=B(x_1,x_2,x_3,x_4)-A(x_1,x_3)A(x_2,x_4)$ for any $x_1,x_2,x_3,x_4\in[0,1]$. The first identity follows from independence, and the last identity follows from the fact that $f,\Theta,B$ are continuous and bounded. Therefore, we have
    \begin{align}\label{eq:correlation M}
    \begin{aligned}
   &\mb{E}\big[ P_{i,3}^{s_1}P_{j,3}^{s_2}\big]\\
   =&\gamma^2 T^{-1} \alpha^2  \int_0^{\frac{\lfloor s_1 T\rfloor}{T}} \int_0^1\int_0^1 f(u)^2 \Theta(u,x)\Theta(u,y)\Tilde{B}(\frac{i}{d},x,\frac{j}{d},y)\mathrm{d}x\mathrm{d}y\mathrm{d}u+o(\gamma^2 T^{-1}).
   \end{aligned}
    \end{align}
Now, we have the following observations.
    \begin{itemize}
        \item [(3.1)] If $\gamma =o(T^{\frac{1}{2}})$, then $P_{i,3}^s\to 0$ in probability and $ -(i+1-dx) P_{i,2}^s-(dx-i) P_{i+1,2}^s\to 0$ in probability.
        \item [(3.2)] If $\gamma T^{-\frac{1}{2}}\to \zeta\in (0,\infty)$, then, with \eqref{eq:correlation M}, we can first apply the Lindeberg-Feller CLT to $P_{i,3}$. Since 
    \begin{align*}
        &\quad\mb{E}\big[ Z_{{\gamma}}^{d,T}(\frac{t}{T},\frac{i}{d})^2 \big]\\
        &= (\frac{\gamma}{dT})^2 \sum_{l_1,l_2}^d f(\frac{t}{T})^2 \Theta(\frac{t}{T},\frac{l_1}{d})\Theta(\frac{t}{T},\frac{l_2}{d})\Tilde{B}(\frac{i}{d},\frac{l_1}{d},\frac{j}{d},\frac{l_2}{d})\\
        &=\frac{\zeta^2f(\frac{t}{T})^2}{T} \big( \int_0^1\int_0^1 \Theta(\frac{t}{T},z_1)\Theta(\frac{t}{T},z_2)\Tilde{B}(\frac{i}{d},z_1,\frac{i}{d},z_2)\mathrm{d}x\mathrm{d}y+o(1)  \big)
        \end{align*}
        and
        \begin{align*}
        \text{Var}(P_{i,3}^s)=\zeta^2\alpha^2 &\int_0^{\frac{\lfloor s T\rfloor}{T}} \int_0^1\int_0^1 f(u)^2 \Theta(u,z_1)\Theta(u,z_2)\Tilde{B}(\frac{i}{d},x,\frac{i}{d},y)\mathrm{d}z_1\mathrm{d}z_2\mathrm{d}u+o(1).
    \end{align*}
As $T\to\infty $, we have
\[
\frac{ \max_{0\le t\le \lfloor sT \rfloor-1} \mb{E}\big[ Z_{{\gamma}}^{d,T}(\frac{t}{T},\frac{i}{d})^2 \big] }{\text{Var}(P_{i,3}^s)}\to 0 .
\]
 \purple{Therefore, there exists a centered Gaussian random field process $\{\xi_3(s,x)\}_{s\in [0,\tau],x\in [0,1]}$ such that for any $s_1,s_2\in [0,\tau]$ and $x,y\in [0,1]$,
\begin{align*}
&\mb{E}[\xi_3(s_1,x)\xi_3(s_2,y)]= \sigma_3(s_1,s_2,x,y)\\
\text{with} &\quad \sigma_3(s_1,s_2,x,y) =\int_0^{s_1\wedge s_2}\int_0^1\int_0^1  \Theta(s,z_1)\Theta(s,z_2)\Tilde{B}(x,z_1,x,z_2)\mathrm{d}z_{1}\mathrm{d}z_{2}\mathrm{d}s ,
\end{align*}
and
$P_{i,3}^s\to \alpha\zeta \int_0^s f(u)\mathrm{d}\xi_3(u,\frac{i}{d})$ in distribution for all $i\in [d]$. By checking the covariance of the process $\{(P_{1,3}^s,P_{2,3}^s,\cdots,P_{d,3}^s)\}_{s\ge 0}$ via equation \eqref{eq:correlation M}, we have 
 $\{(P_{i,3}^s)_{i\in [d]}\}_{s\ge 0}\to \{(\alpha\zeta \int_0^s f(u)\mathrm{d}\xi_3(u,\frac{i}{d}))_{i\in [d]}\}_{s\ge 0}$, where the limit is a finite dimensional Gaussian process.} Furthermore, using the facts that $A\in C([0,1]^2)$ and $B(\cdot,x,\cdot,y)\in C([0,1]^2)$ we have for any $z_1\in [\frac{i}{d},\frac{i+1}{d}) $, $z_2\in [\frac{j}{d},\frac{j+1}{d})$, 
\begin{align*}
&\quad (i+1-dz_1)(j+1-dz_2)\Tilde{B}(\frac{i}{d},x,\frac{j}{d},y)\\
&+(i+1-dz_1)(dz_2-j)\Tilde{B}(\frac{i}{d},x,\frac{j+1}{d},y)\\
&+(dz_1-i)(j+1-dz_2)\Tilde{B}(\frac{i+1}{d},x,\frac{j+1}{d},y)\\
&+(dz_1-i)(dz_2-j)\Tilde{B}(\frac{i+1}{d},x,\frac{j+1}{d},y)\\
&=\Tilde{B}(z_1,x,z_2,y)+o(1)
\end{align*}
and, that any $x\in [\frac{i}{d},\frac{i+1}{d})$:
\begin{align}\label{eq:CLT limit term}
    -(i+1-dx)P_{i,3}^s-(dx-i)P_{i+1,3}^s\todbt  \alpha \zeta \int_0^s f(u)\mathrm{d}\xi_3(u,x).
\end{align}
    \end{itemize}
 Also notice that $\mb{E}[P_{i,3}^s P_{j,2}^s]=0$ for any $s\in [0,\tau]$ and $i,j\in [d]$, and so we have $\mb{E}[\xi_2(s,x)\xi_3(s,y)]=0$ for any $s\in [0,\tau]$ and $x,y\in [0,1]$. Therefore, the Gaussian random field process $\{\xi_3(s,x)\}_{s\in [0,\tau],x\in [0,1]}$ is independent of the Gaussian random field process $\{\xi_2(s,x)\}_{s\in [0,\tau],x\in [0,1]}$. 
\item [(4)] To study the term $P_{4}^s$, since $\Theta\in C^1([0,\tau];C^1([0,1]))$ and \eqref{eq:mainmean ODE LLN}, we have
\begin{align*}
    &\qquad \Theta(\frac{t+1}{T},x)-\Theta(\frac{t}{T},x)\\
    &=-\alpha\int_{\frac{t}{T}}^{\frac{t+1}{T}}\int_0^1 A(x,y)\Theta(s,y)\mathrm{d}y\mathrm{d}s \\
    &=\frac{\alpha}{d T}\sum_{j=1}^d A(x,\frac{j}{d})\Theta(\frac{t}{T},\frac{j}{d}) +O(d^{-1}T^{-1}+T^{-2})\\
    &=\eta \sum_{j=1}^d \big( (i+1-dx) A(\frac{i}{d},\frac{j}{d})+(dx-i)A(\frac{i+1}{d},\frac{j}{d})O(d^{-1}) \big)\Theta(\frac{t}{T},\frac{j}{d})\\
    &\quad+O(d^{-1}T^{-1}+T^{-2}).
\end{align*}
Since $\eta d T\to \alpha$, we have
\begin{align*}
  P_{4}^s&= \gamma  \sum_{t=1}^{\lfloor sT\rfloor-1}f(\frac{t}{T})  O( \eta+ d^{-1}T^{-1}+T^{-2}) \\
  &=  \frac{1}{T} \sum_{t=1}^{\lfloor sT\rfloor-1}f(\frac{t}{T}) O(d^{-1}\gamma+T^{-1}\gamma). 
\end{align*}
\end{itemize}
Therefore, combining our approximations, we get
\begin{align}\label{eq:RHS of CLT test applied sum}
    &\text{RHS}\eqref{eq:CLT test function applied sum} \nonumber \\
    =&-\alpha \int_0^{\frac{\lfloor sT\rfloor}{T}}\int_0^1 f(u)A(x,y)U(u,y)\mathrm{d}y\mathrm{d}u+\alpha \beta \int_0^s f(u) \mathrm{d}\xi_2(u,x) 1_{\{\gamma \sigma d^{-1}T^{-\frac{1}{2}}\to\beta\}}  \\
    &+\alpha\zeta \int_0^s f(u)\mathrm{d}\xi_3(u,x) 1_{\{\gamma T^{-\frac{1}{2}}\to \zeta\}}+\int_0^{\frac{\lfloor sT\rfloor}{T}}f(u)\mathrm{d}u O(\gamma d^{-1}+\gamma T^{-1}) +o(1), \nonumber
\end{align}
where $\{\xi_2(s,x)\}_{s\in [0,\tau],x\in [0,1]}$ and $\{\xi_3(s,x)\}_{s\in [0,\tau],x\in [0,1]}$ are two independent Gaussian field processes adapted to the same filtration $\{\mc{F}_s\}_{s\in [0,\tau]}$ with covariances given by \eqref{eq:gaussian field covariance CLT} and \eqref{eq:gaussian parameters} respectively. $\beta,\zeta\in [0,\infty) $ because $\max(\gamma T^{-\frac{1}{2}},\gamma d^{-1},\gamma \sigma d^{-1}T^{-\frac{1}{2}} )\le C_{s,2}$. Combine \eqref{eq:LHS CLT test function applied sum} and \eqref{eq:RHS of CLT test applied sum}, and we have for any $s\in [0,\tau]$,
\begin{align*}
    &\qquad - \int_0^s U(u,x)f'(u)\mathrm{d}u+f(s)U(s,x)-f(0)U(0,x)\\
    &=-\alpha \int_0^s \int_0^1 f(u)A(x,y)U(u,y)\mathrm{d}y\mathrm{d}u+\alpha \beta \int_0^s f(u) \mathrm{d}\xi_2(u,x) 1_{\{\gamma \sigma d^{-1}T^{-\frac{1}{2}}\to\beta\}}\\
    &\quad+\alpha \zeta \int_0^s f(u) \mathrm{d}\xi_3(u,x) 1_{\{\gamma T^{-\frac{1}{2}}\to \zeta\}}
    + O(\gamma d^{-1}+\gamma T^{-1}) +o(1),
\end{align*}
giving us the desired result.
\end{proof}

\begin{proof}[Proof of Theorem \ref{thm:mainfluc}.]

\textcolor{black}{ The proof follows from the results established in Theorem \ref{thm:masterclttheorem}, as we explain below.}

\textcolor{black}{The case when the particle interaction dominates and the noise dominates  simply follow from integration by parts according to the consequences (a) and (b) of Theorem \ref{thm:mainfluc} respectively, as we discussed in Section \ref{sec:fluctuation scaling limit}. The uniqueness and existence of solution in $C([0,\tau];C([0,1]))$ follows from part.1 of Theorem \ref{prop:existence and uniqueness of SDE}.}

\textcolor{black}{For the case when the interpolation error dominates, since $f$ is smooth, we know that $$
\int_0^{(\cdot)} U( u ,x)  f'(u) \mathrm{d}u\in C^1([0,\tau])\quad\text{and}\quad \int_0^{(\cdot)} f(u)A(x,y)U(u,y)\mathrm{d}u\mathrm{d}y\in C^1([0,\tau]).$$ 
Therefore according to \eqref{eq:large discretization error}, $f(\cdot)U(\cdot,x)\in C^1([0,\tau])$ for any $x\in[0,1]$ which implies that $U(\cdot,x)\in C^1([0,\tau])$ for any $x\in [0,1]$. We can then apply integration by parts to the left side of \eqref{eq:large discretization error}. Therefore, $U$ satisfies \eqref{eq:mainthm fluc SDE CLT large discretization error}.\\
Since $A$ satisfies Assumption \ref{ass:continuous data}, for any $U_1,U_2\in C([0,\tau];C([0,1]))$, we have
\begin{align*}
     &\quad\alpha\sup_{x\in [0,1]} \bigg|  \int_0^1 A(x,y) \left( U_1(s,y)-U_2(s,y) \right)\mathrm{d}y \bigg| \\
     &\le \alpha \sup_{x,y\in [0,1]} \left|A(x,y)\right| \sup_{y\in [0,1]} \left| U_1(s,y)-U_2(s,y)  \right|\\
    &\le \alpha C_2  \sup_{y\in [0,1]} \left|U_1(s,y)-U_2(s,y)  \right|.
\end{align*}
Therefore the right-hand side of \eqref{eq:mainthm fluc SDE CLT large discretization error} is Lipschitz in $U(s,\cdot)$ for any $s\in [0,\tau]$.  According to the Picard-Lindel\"{o}f theorem (see, for example,~\cite{arnold1992ordinary}), there exists a unique solution in $C([0,\tau];C([0,1]))$ to \eqref{eq:mainthm fluc SDE CLT large discretization error} with any initial condition $U(0,\cdot)=U_0(\cdot)\in C([0,1])$.}

\textcolor{black}{With the uniqueness of solution to \eqref{eq:mainthm fluc SDE CLT large discretization error} and Theorem \ref{thm:masterclttheorem}, we know that every subsequence of $\{U^{d,T}\}_{d\ge 1,T>0}$ has a further subsequence converging weakly to a unique $U\in C([0,\tau];C([0,1]))$. Therefore, $\{U^{d,T}\}_{d\ge 1,T>0}$ converges weakly to $U$ as $d,T\to\infty$.}
\end{proof}

\purple{
\begin{lemma}\label{lem:joint tightness CLT} Let $P^{d,T}_l \in C([0,\tau];C([0,1]))$ be the interpolated function defined in the the proof of Theorem \ref{thm:masterclttheorem} for $l=1,2,3,4$. Under the conditions in Theorem \ref{thm:masterclttheorem}, the sequence denoted by $\{(U^{d,T}, P^{d,T}_1, P^{d,T}_2, P^{d,T}_3,P^{d,T}_4)\}_{d\ge 1, T>0}$ is tight in the space $C([0,\tau];C([0,1]))^{\otimes 5}$.      
\end{lemma}
}
\begin{proof}[Proof of Lemma \ref{lem:joint tightness CLT}] Because of the product structure, it suffices to show tightness for each one of $U^{d,T}, P^{d,T}_1, P^{d,T}_2, P^{d,T}_3,P^{d,T}_4$. The tightness of $\{U^{d,T}\}$ is proved in Theorem \ref{thm:tightness of continuous interpolation CLT}. In this proof, we show tightness for $\{P^{d,T}_l\}$ for $l=1,2,3,4$.

\purple{
\textbf{Tightness of $P_1^{d,T}$.} First, we have $P^{d,T}_1(0,0)=\eta f(0) \sum_{j=1}^d W(0,0)W(0,\frac{j}{d}) U^{d,T}(0,\frac{j}{d})$. Under assumptions in Theorem \ref{thm:masterclttheorem}, we have \begin{align*}
    \mb{P}(|P^{d,T}_1(0,0)| >N )&\le N^{-2} \eta^2 f(0)^2 \sum_{j=1}^d B(0,\frac{j}{d},0,\frac{j}{d})\sum_{j=1}^d \mb{E}[|U^{d,T}(0,\frac{j}{d})|^2] \\
    &\le C_5 \lv f \rv_{\infty}^2 N^{-2}\eta^2 d^2 D^2  .
\end{align*}
Since $\eta d T=\alpha<\infty$, we have $\mb{P}(|P^{d,T}_1(0,0)| >N ) \lesssim N^{-2}\to 0$ as $N\to\infty$. Therefore, $\{P^{d,T}_1(0,0)\}_{d\ge 1, T>0}$ is tight in the probability space. Next, we observe that for any $0\le t_1<t_2\le \lfloor sT \rfloor-1$ and $i_1,i_2\in [d]$,
\begin{align*}
    &\qquad P^{d,T}_1(\frac{t_2}{T},\frac{i_2}{d})-P^{d,T}_1(\frac{t_1}{T},\frac{i_1}{d}) \\
    &= \eta \sum_{t=t_1}^{t_2-1}\sum_{j=1}^d f(\frac{t}{T})W(\frac{t}{T},\frac{i_2}{d})W(\frac{t}{T},\frac{j}{d})U^{d,T}(\frac{t}{T},\frac{j}{d})\\
    &\quad +\eta \sum_{t=0}^{t_1-1}\sum_{j=1}^df(\frac{t}{T})\big(W(\frac{t}{T},\frac{i_2}{d})W(\frac{t}{T},\frac{j}{d})-W(\frac{t}{T}, \frac{i_1}{d})W(\frac{t}{T},\frac{j}{d})\big)U^{d,T}(\frac{t}{T},\frac{j}{d})\\
    &= \eta \sum_{t=t_1}^{t_2-1}\sum_{j=1}^d f(\frac{t}{T})A(\frac{i_2}{d},\frac{j}{d})U^{d,T}(\frac{t}{T},\frac{j}{d})\\
    &\quad +\eta \sum_{t=0}^{t_1-1}\sum_{j=1}^df(\frac{t}{T})\big(A(\frac{i_2}{d},\frac{j}{d})-A(\frac{i_1}{d},\frac{j}{d})\big)U^{d,T}(\frac{t}{T},\frac{j}{d}) \\
    &\quad+ \eta \sum_{t=t_1}^{t_2-1}\sum_{j=1}^d f(\frac{t}{T})\big(W(\frac{t}{T},\frac{i_2}{d})W(\frac{t}{T},\frac{j}{d}) - A(\frac{i_2}{d},\frac{i}{d}) \big)U^{d,T}(\frac{t}{T},\frac{j}{d})\\
    &\quad +\eta \sum_{t=0}^{t_1-1}\sum_{j=1}^df(\frac{t}{T})\bigg(W(\frac{t}{T},\frac{i_2}{d})W(\frac{t}{T},\frac{j}{d})-W(\frac{t}{T}, \frac{i_1}{d})W(\frac{t}{T},\frac{j}{d})\\
    &\qquad\qquad\qquad\qquad\qquad\qquad-A(\frac{i_2}{d},\frac{j}{d})+A(\frac{i_1}{d},\frac{j}{d}))\bigg)U^{d,T}(\frac{t}{T},\frac{j}{d}).
\end{align*}
It follows from the arguments of bounding $\mb{E}[M_{i,1}^4],\mb{E}[M_{i,2}^4]$ and $\mb{E}[M_{i,j,1}^4],\mb{E}[M_{i,j,2}^4]$ in the proofs of Proposition \ref{prop:L2 tightness in time CLT} and Proposition \ref{prop:L2 tightness in time and space CLT} that
\begin{align*}
    &\ \mb{E}[| P^{d,T}_1(\frac{t_2}{T},\frac{i_2}{d})-P^{d,T}_1(\frac{t_1}{T},\frac{i_1}{d}) |^4]\\
    \lesssim &\ \eta^4\lv f \rv_{\infty}^4 d^4 (t_2-t_1)^4+\eta^4\lv f \rv_{\infty}^4 d^4 (t_2-t_1)^2\\
     &~~~~+ \eta^4\lv f \rv_{\infty}^4 (i_2-i_1)^4 t_1^4   +\eta^4\lv f \rv_{\infty}^4 (i_2-i_1)^4 t_1^2  \\
   \lesssim&\ (\eta d T)^4 \big(\frac{t_2-t_1}{T}\big)^2 + (\eta d T)^4 \big(\frac{i_2-i_1}{d}\big)^4.
\end{align*}
Since $P^{d,T}_1$ is piecewise linear in both variables and $\eta d T=\alpha$ for all $d\ge 1, T>0$, for all $0\le s_1<s_2\le \tau$ and $0\le x_1<x_2\le 1$, we have
\begin{align*}
    {\mb{E}\big[ \big| P_1^{d,T}(s_1,x_1)-P_1^{d,T}(s_2,x_2)\big|^4 \big]}
    &\lesssim  (s_2-s_1)^2 + (x_1-x_2)^4.
\end{align*}
The tightness of $P^{d,T}_1$ in $C([0,\tau];C([0,1]))$ follows from the Kolmogorov's tightness criterion.}

\purple{\textbf{Tightness of $P_2^{d,T}$.} First, we have $P^{d,T}_2(0,0)=-\eta \gamma f(0)  W(0,0) \varepsilon^0$. Under assumptions in Theorem \ref{thm:masterclttheorem}, we have \begin{align*}
    \mb{P}(|P^{d,T}_2(0,0)| >N )&\le N^{-2} \eta^2 \gamma^2 \sigma^2 f(0)^2 A(0,0) \\
    &\le C_2 \lv f \rv_{\infty}^2 N^{-2}\eta^2  \gamma^2 \sigma^2 .
\end{align*}
Since $\eta d T=\alpha$ and $\gamma \sigma d^{-1}T^{-\frac{1}{2}}\le C_{s,2}$, $\mb{P}(|P^{d,T}_2(0,0)| >N ) \lesssim N^{-2}\to 0$ as $N\to\infty$. Therefore, $\{P^{d,T}_2(0,0)\}_{d\ge 1, T>0}$ is tight in the probability space. Next, we observe that for any $0\le t_1<t_2\le \lfloor sT \rfloor-1$ and $i_1,i_2\in [d]$,
\small{
\begin{align*}
    &\qquad P^{d,T}_2(\frac{t_2}{T},\frac{i_2}{d})-P^{d,T}_2(\frac{t_1}{T},\frac{i_1}{d}) \\
    &= -\eta\gamma \sum_{t=t_1}^{t_2-1}f(\frac{t}{T})W(\frac{t}{T},\frac{i_2}{d})\varepsilon^t-\eta\gamma \sum_{t=0}^{t_1-1}\sum_{j=1}^df(\frac{t}{T})\big(W(\frac{t}{T},\frac{i_2}{d})-W(\frac{t}{T}, \frac{i_1}{d})\big)\varepsilon^t.
\end{align*}
}
It follows from the arguments of bounding $\mb{E}[M_{i,5}^4]$ and $\mb{E}[M_{i,j,5}^4],$ in the proofs of Proposition \ref{prop:L2 tightness in time CLT} and Proposition \ref{prop:L2 tightness in time and space CLT} that
\begin{align*}
    &\quad\mb{E}[| P^{d,T}_2(\frac{t_2}{T},\frac{i_2}{d})-P^{d,T}_2(\frac{t_1}{T},\frac{i_1}{d}) |^4]\\
    &\lesssim \gamma^4\eta^4\lv f\rv_{\infty}^4\sigma^4 (t_2-t_1)^2+\gamma^4\eta^4\lv f \rv_{\infty}^4 \sigma^4 T^2 \big(\frac{i_2-i_1}{d}\big)^4 \big(\frac{t_1}{T}\big)^2\\
   &\lesssim (\eta \gamma\sigma T^{\frac{1}{2}})^4 \big(\frac{t_2-t_1}{T}\big)^2 + (\eta \gamma\sigma T^{\frac{1}{2}})^4 \big(\frac{i_2-i_1}{d}\big)^4.
\end{align*}
Since $P^{d,T}_2$ is piecewise linear in both variables and $\eta \gamma \sigma T^{-\frac{1}{2}} =\alpha (\eta\gamma d^{-1}T^{-\frac{1}{2}})\le \alpha C_{s,2} $ for all $d\ge 1, T>0$, for all $0\le s_1<s_2\le \tau$ and $0\le x_1<x_2\le 1$, we have
\begin{align*}
    {\mb{E}\big[ \big| P_1^{d,T}(s_1,x_1)-P_1^{d,T}(s_2,x_2)\big|^4 \big]}
    &\lesssim  (s_2-s_1)^2 + (x_1-x_2)^4.
\end{align*}
The tightness of $P^{d,T}_2$ in $C([0,\tau];C([0,1]))$ follows from the Kolmogorov's tightness criterion.
}

\purple{\textbf{Tightness of $P_3^{d,T}$.} Notice that at $(t,x)=(0,0)$, we have
$$P^{d,T}_3(0,0)=\eta\gamma f(0)  \sum_{j=1}^d \big(W(0,0)W(0,\frac{j}{d}) -A(0,\frac{j}{d})\big) \Theta(0,\frac{j}{d}).$$ Under assumptions in Theorem \ref{thm:masterclttheorem}, we have \begin{align*}
    \mb{P}(|P^{d,T}_3(0,0)| >N )&\le N^{-2} \eta^2 \gamma^2  f(0)^2 \big( \sum_{j=1}^d  B(0,\frac{j}{d},0,\frac{j}{d})- A(0,\frac{j}{d})^2 \big) \big(\sum_{j=1}^d \mb{E}[|\Theta(0,\frac{j}{d})|^2] \big) \\
    &\le (C_5+C_2^2) \lv f \rv_{\infty}^2 \lv \Theta(0,\cdot) \rv_{\infty}^2 N^{-2}\eta^2  \gamma^2 d^2 .
\end{align*}
Since $\eta d T=\alpha$ and $\gamma T^{-\frac{1}{2}}\le C_{s,2}$, $\mb{P}(|P^{d,T}_3(0,0)| >N ) \lesssim N^{-2}\to 0$ as $N\to\infty$. Therefore, $\{P^{d,T}_3(0,0)\}_{d\ge 1, T>0}$ is tight in the probability space. Next, we observe that for any $0\le t_1<t_2\le \lfloor sT \rfloor-1$ and $i_1,i_2\in [d]$,
\begin{align*}
    &\qquad P^{d,T}_3(\frac{t_2}{T},\frac{i_2}{d})-P^{d,T}_3(\frac{t_1}{T},\frac{i_1}{d}) \\
    &= \gamma\eta \sum_{t=t_1}^{t_2-1}\sum_{j=1}^d f(\frac{t}{T})\big(W(\frac{t}{T},\frac{i_2}{d})W(\frac{t}{T},\frac{j}{d}) - A(\frac{i_2}{d},\frac{i}{d}) \big)\Theta(\frac{t}{T},\frac{j}{d})\\
    &\ +\gamma\eta \sum_{t=0}^{t_1-1}\sum_{j=1}^df(\frac{t}{T})\bigg(W(\frac{t}{T},\frac{i_2}{d})W(\frac{t}{T},\frac{j}{d})-W(\frac{t}{T}, \frac{i_1}{d})W(\frac{t}{T},\frac{j}{d})\\
    &\qquad\qquad\qquad\qquad\qquad-A(\frac{i_2}{d},\frac{j}{d})+A(\frac{i_1}{d},\frac{j}{d}))\bigg)\Theta(\frac{t}{T},\frac{j}{d}).
\end{align*}
It follows from the arguments of bounding $\mb{E}[M_{i,3}^4]$ and $\mb{E}[M_{i,j,4}^4],$ in the proofs of Proposition \ref{prop:L2 tightness in time CLT} and Proposition \ref{prop:L2 tightness in time and space CLT} that
\begin{align*}
    &\quad\mb{E}[| P^{d,T}_3(\frac{t_2}{T},\frac{i_2}{d})-P^{d,T}_3(\frac{t_1}{T},\frac{i_1}{d}) |^4]\\
    &\lesssim \gamma^4\eta^4\lv f\rv_{\infty}^4 d^4 (t_2-t_1)^2+\gamma^4\eta^4\lv f \rv_{\infty}^4 d^4 T^2 \big(\frac{i_2-i_1}{d}\big)^4 \big(\frac{t_1}{T}\big)^2\\
   &\lesssim (\eta \gamma d T^{\frac{1}{2}})^4 \big(\frac{t_2-t_1}{T}\big)^2 + (\eta \gamma d T^{\frac{1}{2}})^4 \big(\frac{i_2-i_1}{d}\big)^4.
\end{align*}
Since $P^{d,T}_3$ is piecewise linear in both variables and $\eta \gamma d T^{-\frac{1}{2}} =\alpha (\gamma T^{-\frac{1}{2}})\le \alpha C_{s,2} $ for all $d\ge 1, T>0$, for all $0\le s_1<s_2\le \tau$ and $0\le x_1<x_2\le 1$, we have
\begin{align*}
    {\mb{E}\big[ \big| P_3^{d,T}(s_1,x_1)-P_3^{d,T}(s_2,x_2)\big|^4 \big]}
    &\lesssim  (s_2-s_1)^2 + (x_1-x_2)^4.
\end{align*}
The tightness of $P^{d,T}_3$ in $C([0,\tau];C([0,1]))$ follows from the Kolmogorov's tightness criterion.
}

\purple{\textbf{Tightness of $P_4^{d,T}$.} First, we have $P^{d,T}_4(0,0)=\gamma\eta f(0)  \sum_{j=1}^d A(0,\frac{j}{d}) \Theta(0,\frac{j}{d})$. Under assumptions in Theorem \ref{thm:masterclttheorem}, we have \begin{align*}
    \mb{P}(|P^{d,T}_4(0,0)| >N )&\le N^{-2} \eta^2 \gamma^2  f(0)^2 \big( \sum_{j=1}^d  A(0,\frac{j}{d})^2 \big) \big(\sum_{j=1}^d \mb{E}[|\Theta(0,\frac{j}{d})|^2] \big) \\
    &\le C_2^2 \lv f \rv_{\infty}^2 \lv \Theta(0,\cdot) \rv_{\infty}^2 N^{-2}\eta^2  \gamma^2 d^2 .
\end{align*}
Since $\eta d T=\alpha$ and $\gamma T^{-\frac{1}{2}}\le C_{s,2}$, $\mb{P}(|P^{d,T}_4(0,0)| >N ) \lesssim N^{-2}\to 0$ as $N\to\infty$. Therefore, $\{P^{d,T}_4(0,0)\}_{d\ge 1, T>0}$ is tight in the probability space. Next, we observe that for any $0\le t_1<t_2\le \lfloor sT \rfloor-1$ and $i_1,i_2\in [d]$,
\begin{align*}
    &\quad P^{d,T}_4(\frac{t_2}{T},\frac{i_2}{d})-P^{d,T}_4(\frac{t_1}{T},\frac{i_1}{d}) \\
    &= \gamma \sum_{t=t_1}^{t_2-1} f(\frac{t}{T})\big(\Theta(\frac{t+1}{T},\frac{i_2}{d})-\Theta(\frac{t}{T},\frac{i_2}{d}) +\eta \sum_{j=1}^d A(\frac{i_2}{d},\frac{j}{d}) \Theta(\frac{t}{T},\frac{j}{d}) \big)\\
    &\quad+\gamma \sum_{t=0}^{t_1-1}f(\frac{t}{T})\bigg(\Theta(\frac{t+1}{T},\frac{i_2}{d})-\Theta(\frac{t}{T},\frac{i_2}{d})-\Theta(\frac{t+1}{T},\frac{i_1}{d})+\Theta(\frac{t}{T},\frac{i_1}{d})  \\
    &\qquad\qquad\qquad\qquad+\eta \sum_{j=1}^d \big(A(\frac{i_2}{d},\frac{j}{d})-A(\frac{i_1}{d},\frac{j}{d})\big) \Theta(\frac{t}{T},\frac{j}{d}) \bigg).
\end{align*}
It follows from the arguments of bounding $\mb{E}[M_{i,4}^4]$ and $\mb{E}[M_{i,j,3}^4],$ in the proofs of Proposition \ref{prop:L2 tightness in time CLT} and Proposition \ref{prop:L2 tightness in time and space CLT} that
\begin{align*}
    &\quad\mb{E}[| P^{d,T}_4(\frac{t_2}{T},\frac{i_2}{d})-P^{d,T}_4(\frac{t_1}{T},\frac{i_1}{d}) |^4]\\
    &\lesssim \gamma^4\eta^4\lv f\rv_{\infty}^4  (t_2-t_1)^4+\gamma^4\eta^4\lv f \rv_{\infty}^4 d^4 T^{-4} (t_2-t_1)^4\\
    &\ + \gamma^4 T^{-4} \lv f \rv_{\infty}^4\big( \frac{i_2-i_1}{d} \big)^4 \big(\frac{t_1}{T}\big)^4+ \gamma^4 d^{-4} \lv f \rv_{\infty}^4\big( \frac{i_2-i_1}{d} \big)^4 \big(\frac{t_1}{T}\big)^4 \\
   &\lesssim (\gamma d^{-1} )^4 \big(\frac{t_2-t_1}{T}\big)^4 + ( \gamma  T^{-1})^4 \big(\frac{i_2-i_1}{d}\big)^4.
\end{align*}
Since $P^{d,T}_4$ is piecewise linear in both variables and $ \gamma d^{-1}, \gamma T^{-\frac{1}{2}} \le  C_{s,2} $ for all $d\ge 1, T>0$, for all $0\le s_1<s_2\le \tau$ and $0\le x_1<x_2\le 1$, we have
\begin{align*}
    {\mb{E}\big[ \big| P_4^{d,T}(s_1,x_1)-P_4^{d,T}(s_2,x_2)\big|^4 \big]}
    &\lesssim  (s_2-s_1)^4 + (x_1-x_2)^4.
\end{align*}
The tightness of $P^{d,T}_4$ in $C([0,\tau];C([0,1]))$ follows from the Kolmogorov's tightness criterion.
}
\end{proof}

\section{Proofs for the existence and uniqueness of the SDE}\label{sec:existence and uniqueness SDE} 

\purple{Our proofs for the existence and uniqueness of the SDE rely on the proporty that $\xi_1,\xi_2,\xi_3$ are all bounded and continuous with probability 1, which can be checked by applying \cite[Theorem 1.4.1]{adler2007random}. In the following proofs, for simplicity, we utilize this property without referring to it repeatedly.}

\begin{proof}[Proof of part (a) of Theorem \ref{prop:existence and uniqueness of SDE}.]

 \textbf{Existence:} We prove existence based on the Picard iteration argument. Let $U_0(s,x)=U(0,x)$ for all $s\in [0,\tau]$. The Picard iteration is given by
 \begin{align}
 \label{eq:Picard iteration Linfty}
 \begin{aligned}
     U_k(s,x)=&~U(0,x)-\alpha\int_0^s \int_0^1 A(x,y)U_{k-1}(u,y)\mathrm{d}y\mathrm{d}u\\
      &~~+\alpha\beta\int_0^s \mathrm{d}\xi_2(u,x)+\alpha\zeta\int_0^s \mathrm{d}\xi_3(u,x).
\end{aligned}
 \end{align}
According to the definition of $\xi_2,\xi_3$, if $U(0,\cdot)\in C([0,1])$, then $U_k(s,\cdot)\in C([0,1])$ for all $s\in [0,\tau]$ and $k\ge 0$. Next we show that $U_k\in C([0,\tau];C([0,1]))$. For any $K>0$, define the stopping time, 
$$
\tau_{K,\purple{\Upsilon}}\coloneqq\min( \tau, \inf\{ s\in [0,\tau]: \max_{0\le k\le K}\lv U_k(s,\cdot) \rv_\infty\ge \purple{\Upsilon} \}  ).
$$
It is easy to see that $\tau_{K,\purple{\Upsilon}}\to \tau$ almost surely as $\purple{\Upsilon}\to\infty$. Define $U_k^{\purple{\Upsilon}}(s,x)\coloneqq U_k(s\wedge \tau_{K,\purple{\Upsilon}},x )$. We have
 \begin{align*}
     U_k^{\purple{\Upsilon}}(s,x)&=U(0,x)-\alpha\int_0^{s}\int_0^1 A(x,y)U_{k-1}(u\wedge \tau_{\purple{\Upsilon}},y) 1_{(0,\tau_{\purple{\Upsilon}})}(u) \mathrm{d}y\mathrm{d}u \\
     &\quad +\alpha\beta\int_0^s 1_{(0,\tau_{\purple{\Upsilon}})}(u)\mathrm{d}\xi_2(u\wedge \tau_{\purple{\Upsilon}},x)+\alpha\zeta\int_0^s 1_{(0,\tau_{\purple{\Upsilon}})}(u)\mathrm{d}\xi_3(u\wedge \tau_{\purple{\Upsilon}},x).
 \end{align*}
 Therefore under Assumption \ref{ass:tightness in space LLN} we obtain
 \begin{align*}
     &\quad\max_{0\le k\le K}\lv U_k^{\purple{\Upsilon}}(s,\cdot) \rv_\infty\\
     &\le \lv U(0,\cdot) \rv_\infty + C_2 \alpha \int_0^s \int_0^1   \max_{0\le k\le K}\lv U_k^{\purple{\Upsilon}}(u,\cdot) \rv_\infty \mathrm{d}y \mathrm{d}u \\
     &\quad + \alpha \beta\sup_{x\in [0,1]} \big|  \int_0^s 1_{(0,\tau_{\purple{\Upsilon}})}(u)\mathrm{d}\xi_2(u,x) \big|+ \alpha \zeta\sup_{x\in [0,1]} \big|  \int_0^s 1_{(0,\tau_{\purple{\Upsilon}})}(u)\mathrm{d}\xi_3(u,x) \big|. 
 \end{align*}
 Taking the expectation, we get
 \begin{align*}
     &\quad \mb{E}\big[ \max_{0\le k\le K}\sup_{s\in [0,\tau]}\lv U_k^{\purple{\Upsilon}}(s,\cdot) \rv_{\infty} \big]\\
     &\le \mb{E}\big[ \lv U(0,\cdot) \rv_\infty \big]+C_2 \alpha\int_0^\tau \mb{E}\big[ \max_{0\le k\le K} \lv U_k^{\purple{\Upsilon}}(u,\cdot) \rv_\infty \big] \mathrm{d}u\\
     &\quad +  \alpha\beta\mb{E}\big[\sup_{x\in [0,1]} \big( \int_0^\tau 1_{(0,\tau_{K,{\purple{\Upsilon}}})}(u)\mathrm{d}\xi_2(u,x) \big)^2\big]^{\frac{1}{2}}\\
     &\quad+\alpha\zeta\mb{E}\big[\sup_{x\in [0,1]} \big( \int_0^\tau 1_{(0,\tau_{K,{\purple{\Upsilon}}})}(u)\mathrm{d}\xi_3(u,x) \big)^2\big]^{\frac{1}{2}}.
 \end{align*}
 According to \eqref{eq:gaussian field covariance CLT} and \eqref{eq:gaussian parameters}, under Assumption \ref{ass:tightness in space LLN}, we have
 \begin{align*}
      &\mb{E}\big[ \max_{0\le k\le K}\sup_{s\in [0,\tau]}\lv U_k^{\purple{\Upsilon}}(s,\cdot) \rv_{\infty} \big]\\
      \le& \mb{E}\big[ \lv U(0,\cdot) \rv_\infty \big]+C_2\alpha \int_0^\tau \mb{E}\big[ \max_{0\le k\le K} \lv U_k^{\purple{\Upsilon}}(u,\cdot) \rv_\infty \big] \mathrm{d}u\\
      &\quad+ \alpha\beta \mb{E}\big[\sup_{x\in [0,1]}  \int_0^\tau A(x,x)  \mathrm{d}(u\wedge {\tau_{K,{\purple{\Upsilon}}}}) \big]^{\frac{1}{2}} \\
     &\quad + \alpha\zeta \mb{E}\big[\sup_{x\in [0,1]}  \int_0^\tau \int_0^1\int_0^1 \Theta(u,z_1)\Theta(u,z_2)\Tilde{B}(x,z_1,x,z_2)\mathrm{d}z_1\mathrm{d}z_2   \mathrm{d}(u\wedge {\tau_{K,{\purple{\Upsilon}}}}) \big]^{\frac{1}{2}} \\
     \le&  \mb{E}\left[ \lv U(0,\cdot) \rv_\infty \right]+C_2 \alpha \int_0^\tau \mb{E}\big[ \max_{0\le k\le K} \sup_{u\in [0,s]}\lv U_k^{\purple{\Upsilon}}(u,\cdot) \rv_\infty \big] \mathrm{d}s \\
     &\quad +C_2^{\frac{1}{2}} \alpha\beta \tau^{\frac{1}{2}}+ (C_2^2+C_{5})^{\frac{1}{2}}\alpha\zeta \lv \Theta \rv_\infty \tau^{\frac{1}{2}},
 \end{align*}
 where $\Tilde{B}(x,z_1,x,z_2)=B(x,z_1,x,z_2)-A(x,z_1)A(x,z_2)$. With Gronwall's inequality, we then get
 \begin{align*}
     &\mb{E}\big[ \max_{0\le k\le K}\sup_{s\in [0,\tau]}\lv U_k^{\purple{\Upsilon}}(s,\cdot) \rv_\infty \big]\\
     \le &\bigg(\mb{E}\big[ \lv U(0,\cdot) \rv_\infty\big] + C_2^{\frac{1}{2}}\alpha\beta \tau^{\frac{1}{2}}+ (C_{5}+C_2^2)^{\frac{1}{2}}\alpha \zeta  \lv \Theta\rv_\infty \tau^{\frac{1}{2}}  \bigg)\exp(C_2 \alpha \tau).
 \end{align*}
Since $K$ is arbitrary, we can push $K, {\purple{\Upsilon}}\to\infty$ and according to Monotone convergence theorem, we prove for any $k$,
 \begin{align*}
     &\quad\mb{E}\big[ \sup_{s\in [0,\tau]}\lv U_k(s,\cdot) \rv_\infty \big]\\
     &\le \bigg(\mb{E}\big[ \lv U(0,\cdot) \rv_\infty\big] +  C_2^{\frac{1}{2}}\alpha\beta \tau^{\frac{1}{2}}+ (C_{5}+C_2^2)^{\frac{1}{2}}\alpha \zeta  \lv \Theta\rv_\infty \tau^{\frac{1}{2}} \bigg)\exp(C_2 \alpha \tau).
 \end{align*}
 Therefore for any $k$, $U_k\in C\left( [0,\tau];C([0,1])\right)$. Next we show that $\{U_k\}_{k=1}^\infty$ converges in the space $C([0,\tau];C([0,1]))$. From \eqref{eq:Picard iteration Linfty}, we have for any $k\ge 1$ and $s\in [0,\tau]$,
 \begin{align*}
     \lv U_{k+1}(s,\cdot)-U_k(s,\cdot) \rv_\infty &= \alpha\lv \int_0^s \int_0^1 A(\cdot,y)\left( U_k(u,y)-U_{k-1}(u,y) \right)\mathrm{d}y\mathrm{d}u \rv_\infty\\
     &\le C_2 \alpha s \int_0^s \lv U_{k}(u,\cdot)-U_{k-1}(u,\cdot) \rv_\infty \mathrm{d}u,
 \end{align*}
 and 
 \begin{align*}
     &\lv U_{1}(s,\cdot)-U_0(s,\cdot) \rv_\infty \le \alpha \lv\int_0^s \int_0^1 A(\cdot,y)U_0(y)\mathrm{d}y \mathrm{d}u \rv_\infty\\ &\qquad\qquad\qquad\qquad\qquad\qquad+\alpha\beta\sup_{x\in [0,1]} \big| \int_0^s \mathrm{d}\xi_2(u,x) \big|+\alpha\zeta\sup_{x\in [0,1]} \big| \int_0^s \mathrm{d}\xi_3(u,x) \big|.
 \end{align*}
Hence, for any $s\in [0,\tau]$,
 \begin{align*}
     &\mb{E}\big[ \sup_{r\in [0,s]}\lv U_{1}(r,\cdot)-U_0(r,\cdot) \rv_\infty \big]\\
     \le&(1+C_2\alpha s)\mb{E}\big[ \lv U(0,\cdot) \rv_\infty\big]+C_2^{\frac{1}{2}}\alpha\beta s^{\frac{1}{2}}+(C_{5}+C_2^2)^{\frac{1}{2}}\alpha\zeta  \lv \Theta \rv_{\infty} s^{\frac{1}{2}}\\
     \coloneqq& C(s)\le C(\tau)<\infty,
 \end{align*}
 and 
 \begin{align*}
    & \mb{E}\big[ \sup_{r\in [0,s]}\lv U_{k+1}(r,\cdot)-U_k(r,\cdot) \rv_\infty\big] \\
     \le &~C_2\alpha \int_0^s \mb{E}\big[\sup_{u\in [0,r]}\lv U_{k}(u,\cdot)-U_{k-1}(u,\cdot) \rv_\infty\big] \mathrm{d}u.
 \end{align*}
 By induction we get for any $k\ge 1$,
 \begin{align*}
     \mb{E}\big[ \sup_{r\in [0,s]} \lv U_{k+1}(r,\cdot)-U_k(r,\cdot) \rv_\infty \big]\le \frac{C(\tau) \left( C_2 \alpha  s \right)^k }{k!}.
 \end{align*}
 Therefore according to Markov's inequality,
 \begin{align*}
     \mb{P}\big( \purple{\sup_{r\in [0,s]}}\lv U_{k+1}(r,\cdot)-U_k(r,\cdot) \rv_\infty>2^{-k}  \big)\le \frac{C(\tau)\left( 2C_2 \purple{\alpha} s \right)^k}{k!}.
 \end{align*}
 Let $\Omega$ be the path space on which the Gaussian field processes $\xi_2$ and $\xi_3$ are defined. According to Borel-Cantelli Lemma we have for almost every $\omega\in \Omega$, there exists $k(\omega)$ such that $\sup_{r\in [0,s]}\lv U_{k+1}(r,\cdot)-U_k(r,\cdot) \rv_\infty\le \frac{1}{2^k}$ for any $k\ge k(\omega)$. Therefore, with probability $1$, $\{U_k(\cdot,\cdot)\}_{k\ge 1}$ converges in $C([0,\tau];C([0,1]))$ with limit $U(\cdot,\cdot)\in C([0,\tau];C([0,1]))$. Furthermore, \purple{by taking the limit of \eqref{eq:Picard iteration Linfty}}, we can check that $U$ satisfies SDE \eqref{eq:mainthm fluc SDE CLT}. Therefore, existence is proved.\\

 \textbf{Uniqueness:} Suppose that there exist two solutions, $U,\Bar{U}$ to SDE \eqref{eq:mainthm fluc SDE CLT}, from \eqref{eq:integral solution SDE}, we have for any $s\in [0,\tau]$, $x\in [0,1]$,
 \begin{align*}
     \Bar{U}(s,x)-U(s,x)=-\alpha \int_0^s \int_0^1 A(x,y)\left( \Bar{U}(u,y)-U(u,y) \right)\mathrm{d}y \mathrm{d}u,
 \end{align*}
 which implies that
 \begin{align*}
     \mb{E}\big[ \sup_{s\in [0,\tau]} \lv \Bar{U}(s,\cdot)-U(s,\cdot) \rv_\infty \big]\le C_2 \alpha \mb{E}\big[ \int_0^\tau \sup_{r\in [0,s]} \lv \Bar{U}(r,\cdot)-U(r,\cdot) \rv_\infty \mathrm{d}s  \big].
 \end{align*}
 By Gronwall's inequality we have
 \begin{align*}
     \mb{E}\big[ \sup_{s\in [0,\tau]} \lv \Bar{U}(s,\cdot)-U(s,\cdot) \rv_\infty \big]\le \mb{E}\big[ \lv \Bar{U}(0,\cdot)-U(0,\cdot) \rv_\infty \big] \exp\left( C_2 \alpha \tau \right)=0.
 \end{align*}
 Therefore there is a unique solution to \eqref{eq:mainthm fluc SDE CLT} in $C([0,\tau];C([0,1]))$.
\end{proof}

\begin{proof}[Proof of part (b) of Theorem \ref{prop:existence and uniqueness of SDE}.]

\textbf{Stability:} Suppose there exist solutions to the SDE \eqref{eq:mainthm fluc SDE CLT}, then the integral form solution can be written as $ \forall \ s\in [0,\tau]$
\begin{align}\label{eq:integral solution SDE}
\begin{aligned}
    U(s,x)=&~U(0,x)-\alpha\int_0^s \int_0^1 A(x,y)U(u,y)\mathrm{d}y \mathrm{d}u\\
    &~+ \alpha \beta\int_0^s \mathrm{d}\xi_2(u,x)+\alpha \zeta \int_0^s \mathrm{d}\xi_3(u,x).
\end{aligned}
\end{align}
 Define the stopping time $\tau_N\coloneqq\min( \tau, \inf\{ s\in [0,\tau]: \lv U(s,\cdot) \rv_{L^2([0,1])}\ge N \}  )$. It is easy to see that $\tau_N\to \tau$ almost surely as $N\to\infty$. Define $U^N(s,x)\coloneqq U(s\wedge \tau_N,x )$. We have
 \begin{align*}
     U^N(s,x)&=U(0,x)-\alpha\int_0^{s}\int_0^1 A(x,y)U(u\wedge \tau_N,y) 1_{(0,\tau_N)}(u) \mathrm{d}y\mathrm{d}u \\
     &\quad +\alpha \beta\int_0^s 1_{(0,\tau_N)}(u)\mathrm{d}\xi_2(u\wedge \tau_N,x)+\alpha \zeta\int_0^s 1_{(0,\tau_N)}(u)\mathrm{d}\xi_3(u\wedge \tau_N,x).
 \end{align*}
 Therefore we obtain an estimation of $U^N(s,\cdot)$ in the space of $L^2([0,1])$,
 \begin{align*}
     &\int_0^1 \big| U^N(s,x) \big|^2 \mathrm{d}x\le 4\lv U(0,\cdot) \rv_{L^2([0,1])}^2 +4\alpha^2s \int_0^s \int_0^1 \big( \int_0^1 A(x,y) U^N(u,y)\mathrm{d}y  \big)^2 \mathrm{d}x\mathrm{d}u\\
     &+ 4\alpha^2\beta^2 \int_0^1 \big| \int_0^s 1_{(0,\tau_N)}(u)\mathrm{d}\xi_2(u,x) \big|^2 \mathrm{d}x +  4\alpha^2\zeta^2 \int_0^1 \big| \int_0^s 1_{(0,\tau_N)}(u)\mathrm{d}\xi_3(u,x) \big|^2 \mathrm{d}x.  
 \end{align*}
 Under Assumption \ref{ass:continuous data}, we have that for the integral operator $$\mc{A}: g\in L^2([0,1])\mapsto \int_0^1 A(x,y)g(y)\mathrm{d}y\in L^2([0,1]),$$ 
 \begin{align*}
     \lv \mc{A} \rv_{op}\coloneqq\sup_{\lv g \rv_{L^2([0,1])}=1} \int_0^1 \big( \int_0^1 A(x,y)g(y) \mathrm{d}y \big)^2 \mathrm{d}x\le C_2.
 \end{align*}
 Therefore we get
 \begin{align*}
     &\lv U^N(s,\cdot) \rv_{L^2([0,1])}^2\le 4\lv U(0,\cdot) \rv_{L^2([0,1])}^2+4C_2^2\alpha^2 s \int_0^s \lv U^N(u,\cdot) \rv_{L^2([0,1])}^2 \mathrm{d}s\\
     & + 4\alpha^2\beta^2 \int_0^1 \left( \int_0^s 1_{(0,\tau_N)}(u)\mathrm{d}\xi_2(u,x) \right)^2 \mathrm{d}x +4\alpha^2\zeta^2 \int_0^1 \left( \int_0^s 1_{(0,\tau_N)}(u)\mathrm{d}\xi_3(u,x) \right)^2 \mathrm{d}x. \\
 \end{align*}
 Taking the supreme over $[0,\tau]$ and taking the expectation, we get
 \begin{align*}
     &\quad\mb{E}\big[ \sup_{s\in [0,\tau]} \lv U^N(s,\cdot) \rv_{L^2([0,1])}^2 \big]\\
     &\le 4\mb{E}\big[ \lv U(0,\cdot) \rv_{L^2([0,1])}^2 \big]+4C_2^2 \alpha^2 \tau \int_0^\tau \mb{E}\big[ \lv U^N(s,\cdot) \rv_{L^2([0,1])}^2 \big] \mathrm{d}s\\
     &\quad + 4\alpha^2 \beta^2 \sup_{s\in[0,\tau]}\int_0^1 \mb{E}\big[\big( \int_0^s 1_{(0,\tau_N)}(u)\mathrm{d}\xi_2(u,x) \big)^2\big] \mathrm{d}x\\
     &\quad  4\alpha^2 \zeta^2 \sup_{s\in[0,\tau]}\int_0^1 \mb{E}\big[\big( \int_0^s 1_{(0,\tau_N)}(u)\mathrm{d}\xi_3(u,x) \big)^2\big] \mathrm{d}x.
 \end{align*}
\purple{Noticing that for any fixed $x\in [0,1]$, $\{\xi_i(s,x)\}_{s\in [0,\tau]}$ is a Gaussian process on $[0,\tau]$ for $i=2,3$. According to \eqref{eq:gaussian field covariance CLT} and \eqref{eq:gaussian parameters}, the quadratic variations of $\{\xi_2(s,x)\}_{s\in [0,\tau]}$ and $\{\xi_3(s,x)\}_{s\in [0,\tau]}$ are
\begin{align*}
    [\mathrm{d}\xi_2(\cdot,x) ]_s &=A(x,x) s, \\
    [\mathrm{d}\xi_3(\cdot,x) ]_s &=\int_0^1\int_0^1 \Theta(s,z_1)\Theta(s,z_2)\Tilde{B}(x,z_1,x,z_2)\mathrm{d}z_1\mathrm{d}z_2  ,
\end{align*}
where $\Tilde{B}(x,z_1,x,z_2)=B(x,x_1,x,z_2)-A(x,z_1)A(x,z_2)$. Under Assumption \ref{ass:continuous data} and the It\^{o}'s formula} \cite[Theorem~3.3]{RevuzYor1999}, we get
 \begin{align*}
      &~\quad\mb{E}\big[ \sup_{s\in [0,\tau]} \lv U^N(s,\cdot) \rv_{L^2([0,1])}^2 \big]\\&\le 4\mb{E}\big[ \lv U(0,\cdot) \rv_{L^2([0,1])}^2 \big]+4C_2^2 \alpha^2\tau \int_0^\tau \mb{E}\big[ \lv U^N(s,\cdot) \rv_{L^2([0,1])}^2 \big] \mathrm{d}s\\
      &\quad+ 4\alpha^2\beta^2 \int_0^1 \int_0^\tau   A(x,x) \mathrm{d}(s\wedge \tau_N)  \mathrm{d}x\\
     &\quad + 4\alpha^2\zeta^2 \int_0^1 \int_0^\tau \int_0^1 \int_0^1 \Theta(s,z_1)\Theta(s,z_2)\Tilde{B}(x,z_1,x,z_2)\mathrm{d}z_1\mathrm{d}z_2 d(s\wedge \tau_N)  \mathrm{d}x\\
     &\le 4\mb{E}\big[ \lv U(0,\cdot) \rv_{L^2([0,1])}^2 \big]+4C_2^2\alpha^2 \tau \int_0^\tau \mb{E}\big[ \lv U^N(s,\cdot) \rv_{L^2([0,1])}^2 \big] \mathrm{d}s\\
     &\quad +4C_2 \alpha^2\beta^2 \tau+ 4\left(C_{5}+C_2^2\right)\alpha^2\zeta^2 \int_0^\tau \lv \Theta(s,\cdot) \rv_{L^2([0,1])}^2 \mathrm{d}s \\
      &\le 4\mb{E}\big[ \lv U(0,\cdot) \rv_{L^2([0,1])}^2 \big]+4C_2^2 \alpha^2 \tau \int_0^\tau \mb{E}\big[ \sup_{r\in [0,s]}\lv U^N(r,\cdot) \rv_{L^2([0,1])}^2 \big] \mathrm{d}s\\
     & \quad+4C_2 \alpha^2\beta^2 \tau + 4(C_{5}+C_2^2) \alpha^2 \zeta^2\int_0^\tau  \lv \Theta(s,\cdot) \rv_{L^2([0,1])}^2 \mathrm{d}s.
 \end{align*}
  With Gronwall's inequality we get
 \begin{align*}
     &\mb{E}\big[ \sup_{s\in [0,\tau]} \lv U^N(s,\cdot) \rv_{L^2([0,1])}^2 \big] \\
     \le& 4\bigg(\mb{E}\big[ \lv U(0,\cdot) \rv_{L^2([0,1])}^2\big] +C_2\alpha^2 \beta^2 \tau \\& +(C_{5}+C_2^2)\alpha^2\zeta^2 \int_0^\tau  \lv \Theta(s,\cdot) \rv_{L^2([0,1])}^2 \mathrm{d}s  \bigg)\times \exp\left(4C_2^2\alpha^2 \tau^2\right).
 \end{align*}
 Last letting $N\to\infty$ and according to Monotone convergence theorem, we prove \eqref{eq:L2 estimation SDE solution}.\\

 \textbf{Existence:} As before, we use the Picard iteration argument to show existence. Let $U_0(s,x)=U(0,x)$ for all $s\in [0,\tau]$. The Picard iteration is given by
 \begin{align}\label{eq:Picard iteration}
 \begin{aligned}
      U_k(s,x)=&~U(0,x)-\alpha\int_0^s \int_0^1 A(x,y)U_{k-1}(u,y)\mathrm{d}y\mathrm{d}u \\
      &~+\alpha\beta\int_0^s \mathrm{d}\xi_2(u,x)+\alpha\zeta\int_0^s \mathrm{d}\xi_3(u,x).
\end{aligned}
 \end{align}
 With similar argument as in the stability part, we get for any $K\in \mb{N}$,
 \begin{align*}
     &\mb{E}\big[ \max_{1\le k\le K} \lv U_k(s,\cdot) \rv_{L^2([0,1])}^2 \big]\le  4C_2^2 \alpha^2 s \int_0^s \max_{1\le k\le K} \mb{E}\big[ \lv U_k(u,\cdot) \rv_{L^2([0,1])}^2 \big] \mathrm{d}u\\ 
     & +4\big( \mb{E}\big[ \lv U(0,\cdot) \rv_{L^2([0,1])}^2 \big]+C_2\alpha^2 \beta^2 s+(C_{5}+C_2^2)\alpha^2 \zeta^2  \int_0^s \lv \Theta(u,\cdot) \rv_{L^2([0,1])}^2 \mathrm{d}u \big).
\end{align*}
Gronwall's inequality implies that for any $k\in \mb{N}$,
 \begin{align*}
     &\qquad \mb{E}\big[  \lv U_k(s,\cdot) \rv_{L^2([0,1])}^2 \big]\\
     &\le  4\bigg( \mb{E}\big[ \lv U(0,\cdot) \rv_{L^2([0,1])}^2 \big]+C_2\alpha^2\beta^2 s+(C_{5}+C_2^2)\alpha^2\zeta^2  \int_0^s \lv \Theta(u,\cdot) \rv_{L^2([0,1])}^2 \mathrm{d}u \bigg)\\
     &\qquad\qquad\qquad\qquad\qquad\qquad\qquad\qquad\qquad\qquad\qquad\qquad\times \exp\left( 4C_2^2 \alpha^2 s^2 \right).
 \end{align*}
 Therefore for any $k$ and $s\in [0,\tau]$, $U(s,\cdot)\in L^2([0,1])$. Next we show that $\{U_k\}_{k=1}^\infty$ converges in the space $C([0,\tau];L^2([0,1]))$. From \eqref{eq:Picard iteration}, we have for any $k\ge 1$ and $s\in [0,\tau]$,
 \begin{align*}
     &\quad\lv U_{k+1}(s,\cdot)-U_k(s,\cdot) \rv_{L^2([0,1])}^2 \\&= \alpha^2\lv \int_0^s \int_0^1 A(x,y)\left( U_k(u,y)-U_{k-1}(u,y) \right)\mathrm{d}y\mathrm{d}u \rv_{L^2([0,1])}^2\\
     &\le C_2^2 \alpha^2 s \int_0^s \lv U_{k}(u,\cdot)-U_{k-1}(u,\cdot) \rv_{L^2([0,1])}^2 \mathrm{d}u,
 \end{align*}
 and 
 \begin{align*}
     &\lv U_{1}(s,\cdot)-U_0(s,\cdot) \rv_{L^2([0,1])}^2 \\
     \le& 3 \alpha^2\int_0^1 \big( \int_0^s \int_0^1 A(x,y)U_0(y)\mathrm{d}y \mathrm{d}u \big)^2 \mathrm{d}x+3\alpha^2\beta^2\int_0^1 \big( \int_0^s \mathrm{d}\xi_2(u,x) \big)^2 \mathrm{d}x\\
     & +3\alpha^2\zeta^2\int_0^1 \big( \int_0^s \mathrm{d}\xi_3(u,x) \big)^2 \mathrm{d}x.
 \end{align*}
 Therefore, for any $s\in [0,\tau]$, we have that
 \begin{align*}
     \mb{E}\big[ \sup_{r\in [0,s]}\lv U_{1}(r,\cdot)-U_0(r,\cdot) \rv_{L^2([0,1])}^2 \big]&\le  3 C_2^2 \alpha^2 s^2 \mb{E}\big[ \lv U(0,\cdot) \rv_{L^2([0,1])}^2 \big] +3C_2\alpha^2\beta^2 s \\
     &\quad +3\left(C_{5}+C_2^2\right)\alpha^2\zeta^2  \int_0^s \lv \Theta(u,\cdot) \rv_{L^2([0,1])}^2 \mathrm{d}u\\
     &\coloneqq C(s)\le C(\tau)<\infty,
 \end{align*}
 and 
 \begin{align*}
     &\mb{E}\big[ \sup_{r\in [0,s]}\lv U_{k+1}(r,\cdot)-U_k(r,\cdot) \rv_{L^2([0,1])}^2\big] \\
     \le& C_2^2  \alpha^2 s \int_0^s \mb{E}\big[\sup_{u\in [0,r]}\lv U_{k}(u,\cdot)-U_{k-1}(u,\cdot) \rv_{L^2([0,1])}^2\big] \mathrm{d}u.
 \end{align*}
 Hence, by induction, we get that for any $k\ge 1$,
 \begin{align*}
     \mb{E}\big[ \sup_{r\in [0,s]} \lv U_{k+1}(r,\cdot)-U_k(r,\cdot) \rv_{L^2([0,1])}^2 \big]\le \frac{C(\tau) \left( C_2^2 \alpha^2 \tau s \right)^k }{k!}.
 \end{align*}
 Therefore according to Markov inequality,
 \begin{align*}
     \mb{P}\big( \purple{\sup_{r\in [0,s]}}\lv U_{k+1}(r,\cdot)-U_k(r,\cdot) \rv_{L^2([0,1])}^2>2^{-k}  \big)\le \frac{C(\tau)\left( 2C_2^2 \alpha^2 \tau s \right)^k}{k!}.
 \end{align*}
 Let $\Omega$ be the path space on which the Gaussian field processes $\xi_2$ and $\xi_3$ are defined. According to Borel-Cantelli Lemma, we have that for almost every $\omega\in \Omega$, there exists $k(\omega)$ such that $\sup_{r\in [0,s]}\lv U_{k+1}(r,\cdot)-U_k(r,\cdot) \rv_{L^2([0,1])}^2\le \frac{1}{2^k}$ for any $k\ge k(\omega)$. Therefore, with probability $1$, $\{U_k(\cdot,\cdot)\}_{k\ge 1}$ converges in $C([0,\tau];L^2([0,1]))$ with limit $U(\cdot,\cdot)\in C([0,\tau];L^2([0,1]))$. Furthermore, \purple{by taking the limit of \eqref{eq:Picard iteration}}, we can check that $U$ satisfies SDE \eqref{eq:mainthm fluc SDE CLT}. Therefore, the solution to \eqref{eq:mainthm fluc SDE CLT} exists. \\

 \textbf{Uniqueness:} Suppose that there exist two solutions, $U,\Bar{U}$ to SDE \eqref{eq:mainthm fluc SDE CLT}, from \eqref{eq:integral solution SDE}, we have for any $s\in [0,\tau]$, $x\in [0,1]$,
 \begin{align*}
     \Bar{U}(s,x)-U(s,x)=-\alpha\int_0^s \int_0^1 A(x,y)\left( \Bar{U}(u,y)-U(u,y) \right)\mathrm{d}y \mathrm{d}u,
 \end{align*}
 which implies that
 \begin{align*}
     &\mb{E}\big[ \sup_{s\in [0,\tau]} \lv \Bar{U}(s,\cdot)-U(s,\cdot) \rv_{L^2([0,1])} \big]\\
     \le &C_2^2 \alpha^2 \tau \mb{E}\big[ \int_0^\tau \sup_{r\in [0,s]} \lv \Bar{U}(r,\cdot)-U(r,\cdot) \rv_{L^2([0,1])} \mathrm{d}s  \big].
 \end{align*}
 By Gronwall's inequality we have
 \begin{align*}
     &\mb{E}\big[ \sup_{s\in [0,\tau]} \lv \Bar{U}(s,\cdot)-U(s,\cdot) \rv_{L^2([0,1])} \big]\\
     \le & \mb{E}\big[ \lv \Bar{U}(0,\cdot)-U(0,\cdot) \rv_{L^2([0,1])}^2 \big] \exp\left( C_2^2 \alpha^2 \tau^2 \right)=0.
 \end{align*}
 Therefore this is a unique solution to \eqref{eq:mainthm fluc SDE CLT} in $C([0,\tau];L^2([0,1]))$.
\end{proof}

\section{Proofs for applications from Section \ref{sec:applications}}

\begin{proof}[Proof of Proposition \ref{prop:general scaling limits}.]
First, according to the definition of $\MSE^{d,T}$, we have
\begin{align*}
    \MSE^{d,T}(s)&=\frac{1}{d}\sum_{i=1}^d \Bar{\Theta}^{d,T}(\frac{\lfloor sT \rfloor}{T},\frac{i}{d})^2 = \int_0^1 \Bar{\Theta}^{d,T}(s,y)^2 \mathrm{d}y+o(1) \\
    &\to \MSE(s)\coloneqq\int_0^1 \Theta(s,y)^2 \mathrm{d}y \quad \text{as }d,T\to\infty,
\end{align*}
where the second identity follows from the fact that $\Bar{\Theta}^{d,T}\in C([0,\tau];C([0,1]))$. The last step follows from Theorem \ref{thm:mainmean}. Next, for the predictive error, according to the definition of $\PrE^{d,T}$, 
\begin{align*}
    \PrE^{d,T}(s)&=\frac{1}{d^2}\sum_{i,j=1}^d A(\frac{i}{d},\frac{j}{d})\Bar{\Theta}^{d,T}(\frac{\lfloor sT \rfloor}{T},\frac{i}{d})\Bar{\Theta}^{d,T}(\frac{\lfloor sT \rfloor}{T},\frac{j}{d})\\
    &=\int_0^1\int_0^1 A(x,y)\Bar{\Theta}^{d,T}(s,x)\Bar{\Theta}^{d,T}(s,y)\mathrm{d}x\mathrm{d}y +o(1)\\
    &\to \PrE(s) \quad\text{as }d,T\to\infty,
\end{align*}
where the second identity follows from the fact that $\Bar{\Theta}^{d,T}\in C([0,\tau];C([0,1]))$. The last step follows from Theorem \ref{thm:mainmean}. Last, different equations that characterize $\Theta$ follows directly from Theorem \ref{thm:mainmean}.
\end{proof}

\begin{proof}[Proof of Proposition \ref{prop:fluctuation of errors}.] Scaling conditions in $(1)$, $(2)$ and $(3)$ corresponds to the different scalings in Theorem \ref{thm:mainfluc}. Therefore, for each case, we have $\Bar{\Theta}^{d,T}\topb \Theta$, $U^{d,T}\todbt U$ with $\Theta$ being the solution to \eqref{eq:mainmean ODE LLN}. $U$ solves \eqref{eq:mainthm fluc SDE CLT}, \eqref{eq:mainthm fluc SDE CLT relative large variance}, \eqref{eq:mainthm fluc SDE CLT large discretization error} in $(1)$, $(2)$ and $(3)$ respectively. Next we apply these convergence results to prove $(i)$.
\begin{align*}
    &\qquad \gamma\big( \MSE^{d,T}(s)-\MSE(s) \big)\\
    &=\underbrace{-\gamma \big( \int_0^1 \Theta(s,x)^2 \mathrm{d}x-\frac{1}{d}\sum_{i=1}^d \Theta(s,\frac{i}{d})^2  \big)}_{N_1^s}-\underbrace{\frac{\gamma}{d}\sum_{i=1}^d \big( \Theta(s,\frac{i}{d})^2-\Theta(\frac{\lfloor sT \rfloor}{T},\frac{i}{d})^2 \big)}_{N_2^s} \\
    &\quad -\underbrace{\frac{2\gamma}{d} \sum_{i=1}^d \Theta(\frac{\lfloor sT \rfloor}{T},\frac{i}{d})\big( \Theta(\frac{\lfloor sT \rfloor}{T},\frac{i}{d}) -\Bar{\Theta}^{d,T}(\frac{\lfloor sT \rfloor}{T},\frac{i}{d}) \big)}_{N_3^s} \\
    &\quad+\underbrace{\frac{\gamma}{d}\sum_{i=1}^d \big( \Theta(\frac{\lfloor sT \rfloor}{T},\frac{i}{d})-\Bar{\Theta}^{d,T}(\frac{\lfloor sT \rfloor}{T},\frac{i}{d}) \big)^2}_{N_4^s}.
\end{align*}
\begin{itemize}
    \item [(a)] For $N_1^s$, since $\Theta(s,\cdot)\in C^1([0,1])\cap L^\infty([0,1])$ for any $s\in [0,\tau]$, we have
    \begin{align*}
     \big| \int_0^1 \Theta(s,x)^2 \mathrm{d}x-\frac{1}{d}\sum_{i=1}^d \Theta(s,\frac{i}{d})^2  \big|\le 2\lv \Theta(s,\cdot) \rv_\infty \lv \partial_x \Theta(s,\cdot) \rv_\infty d^{-1}   
    \end{align*}

    Therefore 
    \begin{itemize}
        \item in $(1)$, $N_1^s=O(T^{\frac{1}{2}d^{-1}})=o(1)$ because $T=o(d^2)$. 
        \item in $(2)$, $N_1^s=O(\sigma^{-1}T^{\frac{1}{2}})=o(1)$ because $\max(d,T^{\frac{1}{2}})\ll \sigma$.
        \item in $(3)$, $N_1^s=O(\gamma d^{-1})=o(1)$ because $\gamma\ll d$.
    \end{itemize}
    \item [(b)] For $N_2^s$, since $\Theta(\cdot,x)\in C^1([0,\tau])\cap L^\infty ([0,\tau]) $ for any $x\in [0,1]$, we have
    \begin{align*}
        \big|  \Theta(s,\frac{i}{d})^2-\Theta(\frac{\lfloor sT \rfloor}{T},\frac{i}{d})^2 \big|\le 2\lv \Theta(\cdot,x) \rv_\infty \lv \partial_s \Theta(\cdot,x) \rv_\infty T^{-1}.
    \end{align*}
    Therefore
    \begin{itemize}
        \item  in $(1)$, $N_2^s=O(\gamma T^{-1})=O(T^{-\frac{1}{2}})=o(1)$.
        \item in $(2)$, $N_2^s=O(\gamma T^{-1})=O(\sigma^{-1}dT^{-\frac{1}{2}})=o(1)$ because $\max(d,T^{\frac{1}{2}})\ll \sigma$.
        \item in $(3)$, $N_2^s=O(\gamma T^{-1})=o(dT^{-1})=o(1)$ because $d=O(T^{\frac{1}{2}})$.
    \end{itemize}
    \item [(c)] For $N_3^s$, according to the definition of $U^{d,T}$, we have
    \begin{align*}
        N_3^s&=-\frac{2}{ d}\sum_{i=1}^d \Theta(\frac{\lfloor sT \rfloor}{T},\frac{i}{d})U^{d,T}(\frac{\lfloor sT \rfloor}{T},\frac{i}{d})\\
        &=-2\int_0^1 \Theta(s,x)U(s,x)\mathrm{d}x\\
        &\quad+2\big( \int_0^1 \Theta(s,x)U(s,x)\mathrm{d}x-\frac{1}{d}\sum_{i=1}^d \Theta(\frac{\lfloor sT \rfloor}{T},\frac{i}{d})U(\frac{\lfloor sT \rfloor}{T},\frac{i}{d}) \big)\\
        &\quad +\frac{2}{d}\sum_{i=1}^d \Theta(\frac{\lfloor sT \rfloor}{T},\frac{i}{d})\big( U(\frac{\lfloor sT \rfloor}{T},\frac{i}{d})-U^{d,T}(\frac{\lfloor sT \rfloor}{T},\frac{i}{d})\big).
    \end{align*}
    As $U\in C([0,\tau];C([0,1]))$, $U^{d,T}\todbt U$ and $\Theta\in C^1([0,\tau];C^1([0,1]))$, we have that  
    \begin{align*}
    \int_0^1 \Theta(s,x)U(s,x)\mathrm{d}x-\frac{1}{d}\sum_{i=1}^d \Theta(\frac{\lfloor sT \rfloor}{T},\frac{i}{d})U(\frac{\lfloor sT \rfloor}{T},\frac{i}{d})&=o(1)\\
    \frac{2}{d}\sum_{i=1}^d \Theta(\frac{\lfloor sT \rfloor}{T},\frac{i}{d})\big( U(\frac{\lfloor sT \rfloor}{T},\frac{i}{d})-U^{d,T}(\frac{\lfloor sT \rfloor}{T},\frac{i}{d})\big)&=o(1).
    \end{align*}
    Therefore, we get
    \begin{align*}
        N_3^s\todbt - 2\int_0^1 \Theta(s,x)U(s,x)\mathrm{d}x. 
    \end{align*} 
    \item [(d)] For $N_4^s$, according to the definition of $U^{d,T}$, we have
    \begin{align*}
        N_4^s&=-\frac{1}{\gamma d} \sum_{i=1}^d U^{d,T}(\frac{\lfloor sT \rfloor}{T},\frac{i}{d})^2\\
        &=-\gamma^{-1} \int_0^1 U(s,x)^2 \mathrm{d}x-\gamma^{-1}\big( \int_0^1 U(s,x)^2 \mathrm{d}x-\frac{1}{d}\sum_{i=1}^d U(\frac{\lfloor sT \rfloor}{T},\frac{i}{d})^2 \big)\\
        &\quad -\gamma^{-1} d^{-1}\sum_{i=1}^d \big( U^{d,T}(\frac{\lfloor sT \rfloor}{T},\frac{i}{d})^2-U(\frac{\lfloor sT \rfloor}{T},\frac{i}{d})^2 \big).
    \end{align*}
    According to \eqref{eq:L2 estimation SDE solution} and the fact that $\gamma\gg 1$, $\gamma ^{-1} \int_0^1 U(s,x)^2 dx=o(1)$. As $U^{d,T}\todbt U\in C([0,\tau];C([0,1]))$, we have that
    \begin{align*}
    \int_0^1 U(s,x)^2 \mathrm{d}x-\frac{1}{d}\sum_{i=1}^d U(\frac{\lfloor sT \rfloor}{T},\frac{i}{d})^2&=o(1)\\
    \frac{1}{d}\sum_{i=1}^d \big( U^{d,T}(\frac{\lfloor sT \rfloor}{T},\frac{i}{d})^2-U(\frac{\lfloor sT \rfloor}{T},\frac{i}{d})^2 \big)&=o(1).
    \end{align*}
    Therefore, $N_4^s=o(1)$.
\end{itemize}
According to points $\textup{(a)}$, $\textup{(b)}$, $\textup{(c)}$, and $\textup{(d)}$ above, $\textup{(i)}$ is proved. The statement $\textup{(ii)}$ can be proved similarly under Assumption \ref{ass:continuous data}. We will leave it to the readers.
\end{proof}

\begin{proof}[Proof of Lemma \ref{lem:Sinusoidal graphon A}.]  Let $\bar{A}(x)=a_0+\sum_{k=1}^\infty b_k \cos(2\pi k x)$ for all $x\in [0,1]$. Then $A(x,y)=\bar{A}(|x-y|)$ for all $x,y\in [0,1]$. To prove that $A$ satisfies Assumption \ref{ass:continuous data}, it suffices to show that $\bar{A}\in C^2([0,1])$. Note that $\bar{A}$ is given in the form of Fourier series with orthonormal basis $\{1,\sqrt{2}\cos(2\pi k x), \sqrt{2}\sin(2\pi kx) \}_{k\ge 1}$. Under condition \eqref{eq:Sinusoidal graphon A parameter}, we have $\bar{A}\in H^{(5+\varepsilon)/2}([0,1])$, where $H^{(5+\varepsilon)/2}([0,1])$ is the Sobolev space of functions on $[0,1]$ with square-integrable weak derivatives up to order $(5+\varepsilon)/2$; see, for example, \cite{adams2003sobolev} for details about Sobolev spaces. By the  Sobolev embedding theorem \citep[Chapter 4]{adams2003sobolev}, we hence have that $\bar{A}\in C^2([0,1])$, thus implying the desired result.

According to \eqref{eq:Isserlis thm}, we have
\begin{align}\label{eq:Isserlis B}
    B(x_1,x_2,x_3,x_4)=\sum_{p\in P_4^2}\prod_{(i,j)\in p} A(x_i,x_j)
\end{align}
 From Lemma \ref{lem:Sinusoidal graphon A}, we know that $A$ is bounded and twice continuously differentiable. Therefore, $B$ is continuous and bounded. Furthermore, with \eqref{eq:Isserlis B}, we have
 \begin{align*}
    &\big| B(x_1,x_3,x_1,x_3)+B(x_2,x_3,x_2,x_3)-2B(x_1,x_3,x_2,x_3)  \big|\\
    =& \big| A(x_1,x_3) \left( A(x_1,x_1)+A(x_2,x_2)-2A(x_1,x_2) \right)-2\left( A(x_1,x_3)-A(x_2,x_3) \right)^2  \big| \\
    \le& C |x_1-x_2|^2,
\end{align*}
and 
\begin{align*}
    &\big| B(x_1,x_1,x_1,x_1)+B(x_2,x_2,x_2,x_2)+6B(x_1,x_1,x_2,x_2)\\
    &\qquad\qquad\qquad\qquad-4B(x_1,x_1,x_1,x_2)-4B(x_1,x_2,x_2,x_2)\big|\\
    =&3\big| A(x_1,x_1)+A(x_2,x_2)-2A(x_1,x_2) \big|^2\le C | x_1-x_2|^4,
 \end{align*}
 where the last inequality follows from Lemma \ref{lem:Sinusoidal graphon A}. Therefore, the function $B$ satisfies Assumption \ref{ass:continuous data}. For the eighth moments, similarly we have
 \begin{align}\label{eq:Isserlis E}
     E(x_1,\cdots,x_8)=\sum_{p\in P_8^2}\prod_{(i,j)\in p} A(x_i,x_j)
 \end{align}
Therefore according to Lemma \ref{lem:Sinusoidal graphon A}, $E$ is continuous and bounded. Furthermore, with \eqref{eq:Isserlis E}, letting $\tilde{x}=(x_3,x_4,x_5,x_6)$ we have that
 \begin{align*}
     &~\big| E(x_1,x_1,x_1,x_1,\tilde{x})+E(x_2,x_2,x_2,x_2,\tilde{x})+6E(x_1,x_1,x_2,x_2,\tilde{x})\\&\qquad\qquad\qquad -4E(x_1,x_1,x_1,x_2,\tilde{x})-4E(x_1,x_2,x_2,x_2,\tilde{x})\big|\\
     =&~\big| 3 ( A(x_1,x_1)+A(x_2,x_2)-2A(x_1,x_2) )^2 \sum_{p\in P_{3-6}^2}\prod_{(i,j)\in p} A(x_i,x_j)\\
     &\quad+\sum_{\sigma(\mathbf{i})} \prod_{j=1}^4(A(x_1,x_{i_j})-A(x_2,x_{i_j})) \\
     &+3( A(x_1,x_1)+A(x_2,x_2)-2A(x_1,x_2) ) \sum_{\sigma(\mathbf{i})} A(x_{i_3},x_{i_4})\prod_{j=1}^2\big(A(x_1,x_{i_j})-A(x_2,x_{i_j})\big)  \big|\\
     \le&~C\big|x_1-x_2\big|^4,
 \end{align*}
 where we use $P_{3-6}^2$ to denote the set of all pairings in $\{3,4,5,6\}$ and $\underset{\sigma(\mathbf{i})}{\sum}$ to denote summing the 4-tuple $\mathbf{i}\coloneqq(i_1,i_2,i_3,i_4)$ over all permutations of the set $\{3,4,5,6\}$. The last inequality above follows from Lemma \ref{lem:Sinusoidal graphon A}.  Therefore, the function $E$ satisfies Assumption \ref{ass:continuous data}.
\end{proof}

\begin{proof}[Proof of Proposition \ref{prop:exp decay of MSE}.]  We start by proving part (b). According to \eqref{eq:mainmean ODE LLN},
\begin{align*}
    \frac{\mathrm{d}}{\mathrm{d}s} \MSE(s)&=\frac{\mathrm{d}}{\mathrm{d}s} \int_0^1 \Theta(s,x)^2 \mathrm{d}x =-2\alpha \int_0^1\int_0^1 \Theta(s,x)A(x,y)\Theta(s,y) \mathrm{d}x\mathrm{d}y \\
    &=-2\alpha \sum_{i=1}^\infty \lambda_i \langle  \Theta(s,\cdot),\phi_i \rangle_{L^2([0,1])}^2\\
    &\le -2\alpha \lambda \MSE(s),
    \end{align*}
    where the equality in the second line follows from our assumption on $A$ and the inequality in the last line follows from the assumption that $\lambda\coloneqq\inf_{i} \lambda_i>0 $. The final claim follows from Gronwall's inequality.

 Next, we prove part (a). Define the functional $\mc{F}:L^2([0,1])\to \mb{R}$ as
 \[
 \mc{F}[\phi]\coloneqq \int_0^1 \int_0^1 A(x,y)\phi(x)\phi(y)\mathrm{d}x\mathrm{d}y,\qquad \forall \phi\in L^2([0,1]).
 \] 
 Its functional gradient $\frac{\delta \mc{F}[\phi]}{\delta \phi}:[0,1]\to \mb{R}$ and its functional Hessian $\frac{\delta^2 \mc{F}[\phi]}{\delta \phi^2}:[0,1]^2\to \mb{R}$ are given by
\begin{align*}
    \frac{\delta \mc{F}[\phi]}{\delta \phi}(x)=2\int_0^1 A(x,y)\phi(y)\mathrm{d}y, \quad  \frac{\delta^2 \mc{F}[\phi]}{\delta \phi^2}(x,y)=2A(x,y)
\end{align*}
Let $\Theta$ be the solution to \eqref{eq:mainmean ODE LLN}. For any $s\in [0,\tau]$, we have
\begin{align*}
    \PrE'(s)&=\frac{\mathrm{d}}{\mathrm{d}s} \mc{F}[\Theta(s,\cdot)] =-2\alpha \int_0^1\int_0^1 \int_0^1 A(x,y)A(y,z)\Theta(s,x)\Theta(s,z)\mathrm{d}x\mathrm{d}y\mathrm{d}z \\
    &=-\frac{\alpha}{2} \int_0^1 \left|\frac{\delta \mc{F}[\Theta(s,\cdot)]}{\delta \Theta(s,\cdot)}(y)  \right|^2 \mathrm{d}y,
\end{align*}
where the second identity follows from \eqref{eq:mainmean ODE LLN}. Integrate \purple{over} the interval $[0,s]$ and we get
\begin{align}\label{eq:inter step}
    \PrE(s)&=\PrE(0)-\frac{\alpha}{2} \int_0^s \int_0^1 \left|\frac{\delta \mc{F}[\Theta(r,\cdot)]}{\delta \Theta(r,\cdot)}(y)  \right|^2 \mathrm{d}y\mathrm{d}r \nonumber\\
    &=\mc{F}[\Theta(0,\cdot)]-\mc{F}[0]-\frac{\alpha}{2} \int_0^s \int_0^1 \left|\frac{\delta \mc{F}[\Theta(r,\cdot)]}{\delta \Theta(r,\cdot)}(y)  \right|^2 \mathrm{d}y\mathrm{d}r ,
\end{align}
where the last step follows from the fact that $\mc{F}[0]=0$. Notice that,
\begin{align}\label{eq:relation between function gradient and time derivative}
    \frac{\delta \mc{F}[\Theta(r,\cdot)]}{\delta \Theta(r,\cdot)}(y)=2\int_0^1 A(x,y)\Theta(r,y)\mathrm{d}y=-\frac{2}{\alpha} \partial_r \Theta(r,x).
\end{align}
Therefore, we have
\begin{align}\label{eq:convexity of functional}
    \PrE(0)-\mc{F}[0]&=-\Big\langle \frac{\delta \mc{F}[\Theta(0,\cdot)]}{\delta \Theta(0,\cdot)}, -\Theta(0,\cdot) \Big\rangle_{L^2([0,1])}- \int_0^1\int_0^1 \Theta(0,x) A(x,y) \Theta(0,y)\mathrm{d}x\mathrm{d}y  \nonumber \nonumber \\
    &\le -\frac{2}{\alpha} \langle \partial_s \Theta(s,x) |_{s=0}, \Theta(0,\cdot) \rangle_{L^2([0,1])} \nonumber\\
    &= -\frac{1}{\alpha} \left(\frac{\mathrm{d}}{\mathrm{d}s} \int_0^1 \Theta(s,y)^2 \mathrm{d}y\right)\bigg|_{s=0},
\end{align}
where the first step follows from Taylor expansion and the inequality follows from our assumption on $A$ and \eqref{eq:relation between function gradient and time derivative}. Meanwhile, according to \eqref{eq:relation between function gradient and time derivative},
\begin{align}\label{eq:square term}
    &\quad-\frac{\alpha}{2} \int_0^s \int_0^1 \left|\frac{\delta \mc{F}[\Theta(r,\cdot)]}{\delta \Theta(r,\cdot)}(y)  \right|^2 \mathrm{d}y\mathrm{d}r\\
    &=-\frac{2}{\alpha} \int_0^s \int_0^1 \left( \partial_r \Theta(r,y) \right)^2 \mathrm{d}y\mathrm{d}r =-\frac{2}{\alpha} \int_0^1 \int_0^s \partial_r \Theta(r,y) \Theta(r,y) \mathrm{d}r\mathrm{d}y \nonumber\\
    &=-\frac{2}{\alpha} \int_0^1  \Theta(s,y)\partial_r\Theta(r,y)|_{r=s}-\Theta(0,y)\partial_r\Theta(r,y)|_{r=0} \mathrm{d}y \nonumber \\
    &\quad +\frac{2}{\alpha}\int_0^1 \int_0^s \Theta(r,y)\partial^2_{rr} \Theta(r,y) \mathrm{d}r\mathrm{d}y \nonumber\\
    &=\frac{2}{\alpha}\left( \frac{\mathrm{d}}{\mathrm{d}r} \int_0^1 \Theta(r,x)^2 \mathrm{d}x\bigg|_{r=0}-\frac{\mathrm{d}}{\mathrm{d}r} \int_0^1 \Theta(r,x)^2 \mathrm{d}x\bigg|_{r=s}  \right)\nonumber \\
    &\quad +\frac{\alpha}{2} \int_0^s \int_0^1 \left|\frac{\delta \mc{F}[\Theta(r,\cdot)]}{\delta \Theta(r,\cdot)}(y)  \right|^2 \mathrm{d}y\mathrm{d}r
\end{align}
where the last step follows because
\begin{align*}
    &\quad\frac{2}{\alpha}\int_0^1 \int_0^s \Theta(r,y)\partial^2_{rr} \Theta(r,y) \mathrm{d}r \mathrm{d} y\\
    &=-2\int_0^1 \int_0^s \Theta(r,y) \int_0^1 A(y,z)\partial_r \Theta(r,z)\mathrm{d}z \mathrm{d}r\mathrm{d}y \\
    &=2\alpha \int_0^1 \int_0^s \Theta(r,y) \int_0^1 A(y,z) \int_0^1 A(z,x)\Theta(r,x)\mathrm{d}x \mathrm{d}z \mathrm{d}r \mathrm{d}y  \\
    &= 2\alpha \int_0^s \int_0^1 \left( \int_0^1 A(y,z)\Theta(r,z)\mathrm{d}z \right)^2 \mathrm{d}y \mathrm{d}s\\
    &=\frac{\alpha}{2} \int_0^s \int_0^1 \left|\frac{\delta \mc{F}[\Theta(r,\cdot)]}{\delta \Theta(r,\cdot)}(y)  \right|^2 \mathrm{d}y\mathrm{d}r.
\end{align*}
Therefore based on combining \eqref{eq:inter step}, \eqref{eq:convexity of functional} and \eqref{eq:square term}, we have that
\begin{align*}
    &\quad\PrE(s)\\
    &\le -\frac{1}{\alpha} \left(\frac{\mathrm{d}}{\mathrm{d}s} \int_0^1 \Theta(s,y)^2 \mathrm{d}y\right)\bigg|_{s=0}+\frac{1}{\alpha}\left( \frac{\mathrm{d}}{\mathrm{d}r} \int_0^1 \Theta(r,x)^2 \mathrm{d}x\bigg|_{r=0}-\frac{\mathrm{d}}{\mathrm{d}r} \int_0^1 \Theta(r,x)^2 \mathrm{d}x\bigg|_{r=s}  \right)\\
    &=-\frac{1}{\alpha} \frac{\mathrm{d}}{\mathrm{d}s} \int_0^1 \Theta(s,x)^2 \mathrm{d}x.
\end{align*}
Integrating over $[0,\tau]$, we have
\begin{align*}
    \int_0^\tau \PrE(s)\mathrm{d}s \le -\frac{\MSE(\tau)}{\alpha}+\frac{\MSE(0)}{\alpha}\le \frac{\MSE(0)}{\alpha}.
\end{align*}
From previous calculations we know that $\PrE'(s) =-\frac{\alpha}{2} \int_0^1 \left|\frac{\delta \mc{F}[\Theta(s,\cdot)]}{\delta \Theta(s,\cdot)}(y)  \right|^2 \mathrm{d}y\le 0$. Therefore \begin{align*}
   \PrE(\tau)\le \frac{\int_0^\tau \PrE(s)\mathrm{d}s}{\tau} \le \frac{\MSE(0)}{\alpha \tau}.
\end{align*}
\end{proof}

\bibliography{citation}       


\end{document}